\renewcommand{\theequation}{\thesection.\arabic{equation}}
\numberwithin{equation}{section}
\let\al=\alpha
\let\d=\delta
\let\e=\varepsilon
\let\la=\lambda
\let\f=\frac
\let\Om=\Omega
\let\tri=\triangle
\let\na=\nabla
\let\pa=\partial
\def\eqdefa{\buildrel\hbox{\footnotesize def}\over =}
\def\cA{{\cal A}}
\def\mS{\mathbb{S}}
\def\mD{\mathbb{D}}
\def\cA{\mathcal{A}}
\def\brho{\overline{\rho}}
\def\dv{\mbox{div}}
\def\dive{\mathop{\rm div}\nolimits}
\newcommand{\beq}{\begin{equation}}
\newcommand{\eeq}{\end{equation}}
\newcommand{\ben}{\begin{eqnarray}}
\newcommand{\een}{\end{eqnarray}}
\newcommand{\beno}{\begin{eqnarray*}}
\newcommand{\eeno}{\end{eqnarray*}}
\newtheorem{theorem}{Theorem}[section]
\newtheorem{lemma}[theorem]{Lemma}
\newtheorem{proposition}[theorem]{Proposition}
\newtheorem{remark}[theorem]{Remark}
\begin{document}
%\begin{CJK*}{UTF8}{gkai}
\title[Local well-posedness of 3-D CNS with  vacuum]
{Local well-posedness of the vacuum free boundary of 3-D compressible Navier-Stokes equations}

\author[G. Gui]{Guilong Gui}
\address{Center for Nonlinear Studies, School of Mathematics\\ Northwest University\\
Xi¡¯an 710069, China}
\email{glgui@amss.ac.cn}

\author[C. Wang]{Chao Wang}
\address{School of Mathematical Sciences\\ Peking University\\ Beijing 100871,China}
\email{wangchao@math.pku.edu.cn}

\author[Y. Wang]{Yuxi Wang}
\address{School of Mathematical Sciences\\ Peking University\\ Beijing 100871,China}
\email{wangyuxi0422@pku.edu.cn}

\date{\today}%1.9.2016

\maketitle

\begin{abstract}
In this paper, we consider the 3-D motion of viscous gas with the vacuum free boundary. We use the conormal derivative to establish local well-posedness of this system. One of important advantages in the paper is that we do not need any strong compatibility conditions on the initial data in terms of the acceleration.

\end{abstract}

%%%%%%%%%%%%%%%%%%%%%%%%%%%%%%%%%%%%%%%%%%%%%%%%%%%%%%%%%%%%%%
%%%%%%%%%%%%%%%%%%%%%%%%%%%%%%%%%%%%%%%%%%%%%%%%%%%%%%%%%%%%%
\renewcommand{\theequation}{\thesection.\arabic{equation}}
\setcounter{equation}{0}

%%%%%%%%%%%%%%%%%%%%%%%%%%%%%%%%%%%%%%%%%%%%%%%%%%%%%%%%%%%%%%
%%%%%%%%%%%%%%%%%%%%%%%%%%%%%%%%%%%%%%%%%%%%%%%%%%%%%%%%%%%%%%%%%%
\section{Introduction}
\subsection{Formulation in Eulerian Coordinates}

In the paper, we consider a 3-D viscous compressible fluid in a moving domain $\Om(t)$ with an upper free surface $\Gamma(t)$ and a fixed bottom $\Gamma_b$. This model can be expressed by the 3-D compressible Navier-Stokes equations(CNS)
\begin{equation}\label{eq:CNS-Free}
\begin{cases}
&\pa_t \rho+\na\cdot (\rho\,u)=0\quad\mbox{in}\quad \Om(t),\\
&\rho(\pa_t u+u\cdot \na u)+\na p-\na\cdot \mathbb{S}(u)=0\quad\mbox{in}\quad \Om(t),\\
&\rho>0  \quad\mbox{in}\quad \Om(t),\quad \rho=0 \quad\mbox{on}\quad \Gamma(t),\\
&\mathcal{V}(\Gamma (t))=u\cdot n \quad\mbox{on}\quad \Gamma(t),\\
&(\mathbb{S}(u)-p\, \mathbb{I})n=0 \quad\mbox{on}\quad \Gamma(t),\\
&u|_{\Gamma_{b}}=0 \quad\mbox{on}\quad \Gamma_{b},\\
&(\rho,u)|_{t=0}=(\rho_0,u_0) \quad\mbox{in}\quad \Om(0),\quad\Om(0)=\Om_0,
\end{cases}
\end{equation}
where $\mathcal{V}(\Gamma (t))$ denotes the normal velocity of the free surface $\Gamma(t)$, and $n= n(t)$ is the exterior unit normal vector of $\Gamma(t)$, the vector-field $u$ denotes the Eulerian velocity field, $\rho$ is the density of the fluid, and $p=p(\rho)$ denotes the pressure function. The stress tensor $\mathbb{S}(u)$ is defined by $\mathbb{S}(u)=\mu\mathbb{D}(u)+\la (\na \cdot u)\mathbb{I}$, where the strain tensor $\mathbb{D}(u)=\na u+\na u^{T}$ and dynamic viscosity $\mu$ and bulk viscosity $\nu$ are constants which satisfy the following relationship
 \begin{align}\label{assum: mu,la}
 \mu>0,~\lambda+\frac{2}{3}\mu\geq 0.
 \end{align}
  The deviatoric (trace-free) part of the strain tensor $\mathbb{D}(u)$ is then $\mathbb{D}^0(u)=\mathbb{D}(u)-\frac{2}{3}\dive\, u\, \mathbb{I}$. The viscous stress tensor in fluid is then given by $\mathbb{S}(u)=\mu\,\mathbb{D}^0(u)+(\lambda+\frac{2}{3}\mu) (\na \cdot u)\, \mathbb{I}$.
 Moreover, the pressure obeys the $\gamma$-law: $p(\rho)=K\,\rho^{\gamma}$, where $K$ is an entropy constant and $\gamma>1$ is the adiabatic gas exponent.

Equation $\eqref{eq:CNS-Free}_1$ is the conservation of mass; Equation $\eqref{eq:CNS-Free}_2$ means the
momentum conserved; the boundary condition $\eqref{eq:CNS-Free}_3$ states that
the pressure (and hence the density function) vanishes along the moving boundary
$\Gamma(t)$, which indicates that the vacuum state appears on the boundary $\Gamma(t)$; the kinematic boundary condition $\eqref{eq:CNS-Free}_4$ states that the vacuum boundary $\Gamma(t)$ is moving with speed equal to the normal component of the fluid velocity; $\eqref{eq:CNS-Free}_5$ means the fluid satisfies the kinetic boundary condition on the free boundary, $\eqref{eq:CNS-Free}_6$ denotes the fluid is no-slip, no-penetrated on the fixed bottom boundary, and $\eqref{eq:CNS-Free}_{7}$ are the initial conditions for the density, velocity, and domain.

In the paper, we assume the bottom $\Gamma_{b}=\{y_3=b(y_h)\}$, and the moving domain $\Om(t)$ is horizontal periodic by setting $\mathbb{T}^2_{y_h}$ with $y_h:=(y_1, y_2)^T$ for $\mathbb{T}=\mathbb{R}/\mathbb{Z}$.

\bigskip

\subsection{Known results}

Whether or not the appearance of vacuum state is related to the regularity of the solution to the compressible Navier-Stokes equations. Even if there is no vacuum in initial data, it cannot guarantee that vacuum state will be not generated in finite time in high-dimensional system. Whence initial data is close to a non-vacuum equilibrium in some functional space, Matsumura and Nishida\cite{MN1,MN2} proved global well-posedness of strong solutions to the 3-D CNS. Moreover, for the one dimensional case, Hoff and Smoller \cite{HoSo2001} proved that if the vacuum is not included at the beginning, no vacuum will occur in the future. Hoff and Serre \cite{HoSe1991} showed some physical weak solution does not have to depend continuously on their initial data when vacuum occurs.

When the initial density may vanish in open sets or on the (part of) boundary of the domain, the flow density may contain a vacuum, the equation of velocity becomes a strong degenerate hyperbolic-parabolic system and the degeneracy is one of major difficulties in study of regularity and the solution's behavior, which is completely different from the non-vacuum case. For the existence of solutions for arbitrary data (the far field density is vacuum, that is, $\rho(t, x)\rightarrow 0$ as $x\rightarrow \infty$), the major breakthrough is due to Lions \cite{LION1998} (also see \cite{Feireisl2001, Jiang2001, Hoff2005}), where he obtains global existence of weak solutions, defined as solutions with finite energy with suitable $\gamma$. Recently, Li-Xin \cite{LiXin2015} and Vasseur-Yu \cite{Vasseur2016} independently studied global existence of weak solutions of CNS whence the viscosities depend on the density and satisfy the Bresch-Desjardins relation \cite{BDE2003}.
Yet little is known on the structure of such weak solutions except for the case that
some additional assumptions are added (see \cite{HS2008} for example). Indeed, the works of Xin etc. \cite{Xin, LWZ} showed that the homogeneous Sobolev space is as crucial as studying the well-posedness for the Cauchy problem of compressible Navier-Stokes equations in the presence of a vacuum at far fields even locally in time. Adding some compatible condition on initial data,  Cho and Kim \cite{CK1} develop local well-posedness for strong solutions. Moreover, if initial energy is small, Huang, Li and Xin \cite{HLX} showed the global existence of classical solutions but with large oscillations to CNS.

Physically, the vacuum problem appears extensively in the fundamental free boundary hydrodynamical setting: for instance, the evolving boundary of a viscous gaseous star, formation of shock waves, vortex sheets, as well as phase transitions.

For free boundary problem of the multi-dimensional Navier-Stokes equations with non-vacuum state, there are many results concerning its local and global strong solutions, one may refer to \cite{Zadrzynska2001,Zajaczkowski1999} and references therein.

But when the vacuum (in particular, the physical vacuum \cite{LY}) appears, the system becomes much harder. To understand the difficulty of the vacuum, we introduce the sound speed $c:=\sqrt{p'(\rho)} (=\sqrt{K\gamma} \rho^{\frac{\gamma-1}{2}}$  for polytropic gases) of the gas or fluid to describe the behavior of the smoothness of the density connecting to vacuum boundary. A vacuum boundary $\Gamma(t)$ is called physical vacuum if there holds
\begin{equation}\label{con:physical}
-\infty<\f{\pa c^2}{\pa n}<0
\end{equation}
near the boundary $\Gamma(t)$, where $n$ is the outward unit
normal to the free surface. The physical vacuum condition \eqref{con:physical} implies the pressure (or the enthalpy $c^2$) accelerates the boundary in the normal direction. Thus, the initial physical vacuum condition \eqref{con:physical} is equivalent to the requirement that
\begin{equation}\label{con:physical-2}
-\infty<\partial_n (\rho_0^{\gamma-1})<0 \quad \text{on} \quad \Gamma(0)
\end{equation}
which means that $\rho^{\gamma-1}_0(x)\sim dist(x,\Gamma (0))$, in other words, the initial sound speed $c_0$ is only $C^{\frac{1}{2}}$-H\"{o}lder continuous near the interface $\Gamma(0)$.

Due to lack of sufficient smoothness of the enthalpy $c^2$ at the vacuum boundary, a rigorous understanding of the existence of physical vacuum states in compressible fluid dynamics has been a challenging problem, especially in multi-dimensional cases.

Recently, the local well-posedness theory for compressible Euler system with physical vacuum singularity was established in \cite{CS2, JM1,JM2}, and also global existence of smooth solutions for the physical vacuum free boundary problem of the 3-D spherically symmetric compressible Euler equations with damping was showed in \cite{LZ2016}. And more recently, Hadzic and Jang \cite{HJ2018} proved global nonlinear stability of the affine solutions to the compressible Euler system with physical vacuum, and Guo, Hadzic, and Jang \cite{GHJ2018} constructed an infinite dimensional family of collapsing solutions to the Euler-Poisson system whose density is in general space inhomogeneous and undergoes gravitational blowup along a prescribed space-time surface, with continuous mass absorption at the origin.

The study of vacuum is important in understanding viscous surface flows \cite{LXY2000}. Very little is rigorously known about well-posedness theories available about  free boundary problems of CNS with physical vacuum boundary. For 1-D problem, global regularity for weak solutions to the vacuum free boundary problem of CNS was obtained in \cite{LXY2000}, which is further generalized by Zeng \cite{Zeng2015} which established the strong solutions. For the multidimensional case, regularity results related to spherically symmetric motions. Guo, Li and Xin \cite{GLX} obtain a global weak solution to the problem with spherically symmetric motions and a jump density connects to vacuum. Later Liu\cite{Liu2018} gives the existence of global solutions with small energy in spherically symmetric motions with the density connected to vacuum continuously or discontinuously. Anyway, almost all the well-posedness results require additional strongly singular compatibility conditions on initial data in terms of the acceleration for gaining more regularities of the velocity. Some related works can refer to \cite{CK,OM, Li2016,Guo2012b,Yeung2009,Jang-1,FZ,FZ1,LY, LXY2000,YT2006,LXZ2016} and references therein.

The purpose of this paper is to establish the local well-posedness of the 3-D compressible Navier-Stokes equations \eqref{eq:CNS-Free} with physical vacuum boundary condition without any compatibility conditions, more precisely, we do not need any initial condition on the material derivative $D_t u$ or its derivatives. For simplicity, we set $\gamma=2$ and $K=1$ in this paper.

As mentioned above, the main difficulty in obtaining regularity for the vacuum free boundary problem \eqref{eq:CNS-Free} lies in the degeneracy of the system near vacuum boundaries. In order to solve the system \eqref{eq:CNS-Free}, the first idea is that we use Lagrangian coordinates to transform it to a system with fixed domain. One of advantage of Lagrangian coordinates is that the density $\rho$ is solved directly by initial data and we only focus on the equation of velocity with coefficients related to Lagrangian coordinates.

The second and also key idea in our paper is that we use the conormal derivatives to obtain the high-order regularity. Because the density vanishes on the boundary, we can not close the energy estimates if we directly take normal derivatives to the system. So another choose is to take time derivatives in \cite{CS2, JM2} solving the compressible Euler equations with the physical vacuum, where high-order enough time-derivative estimates as long as spatial-derivative estimates allow us to close the energy estimates and then get the local-in-time existence of the strong solution of the Euler system. This high-order energy estimate in it is reasonable since the pressure term may cancel the singularity near the vacuum boundary when consider compatibility conditions on initial data in terms of the acceleration and its derivatives. However, this method may not work for the Navier-Stokes system \eqref{eq:CNS-Free} with constant viscosity coefficients. In fact, a strong singular compatibility conditions on initial data in terms of the acceleration and its derivatives will appear in it when we consider the high-order energy estimate, which is mainly due to the non-degenerate of the viscosity, but it seems very hard to find such kind of initial data satisfying these compatibility conditions. In order to get rid of this difficulty, our strategy is that we use conormal Sobolev space introduced in \cite{MR} to get the tangential regularity. Based on that, we multiply $\pa_t v$ on the both sides of equations of $v$ to get the estimates of $\rho^{\f12}\pa_t v$ which implies the two-order derivative on the normal direction. Form this, together with high-order tangential derivatives estimates, we get the $W^{1,\infty}$ estimates of $v$ and its conormal derivatives, which in turn guarantees the propagation of conormal regularities of the velocity.

\subsection{Derivation of the system in Lagrangian coordinate and main result}

In this paper, we consider the case that the upper boundary does not touch the bottom which means that
\begin{equation*}
dist(\Gamma(0), \Gamma_b)>0.
\end{equation*}
Take $\Om=\{x \in \mathbb{T}^2\times \mathbb{R}|\quad 0 <x_3<1\}$ as the domain of equilibrium.
Let $\eta(t,x)$ be the position of the gas particle $x$ at time $t$ so that
\begin{align}\label{eq:flow map}
\left\{
\begin{aligned}
&\pa_t \eta(t,x)=u(t,\eta(t,x)) \quad\mbox{for}\quad t>0,\\
&\eta(0,x)=\eta_0(x) \quad\mbox{in}\quad {\Omega}.
\end{aligned}
\right.
\end{align}
Here $\eta_0$ is a diffeomorphism from ${\Om}$ to the initial moving domain $\Om(0)$ which satisfies that $\Gamma(0)=\eta_0(\{x_3=1\})$ and $\Gamma_{b}=\eta_0(\{x_3=0\})$.  It is easy to construct a invertible transform $\eta_0$ which satisfies that
\beno
det D\eta_0 >0.
\eeno

Due to \eqref{eq:flow map}, we introduce the displacement $\xi(t, x)\eqdefa\eta(t, x)-x$ which satisfies the following ODE
\begin{equation}\label{eq:flow map-1}
\begin{cases}
&\pa_t \xi(t,x)=u(t, x+\xi(t,x)) \quad\mbox{for}\quad t>0,\\
&\xi(0,x)=\xi_0(x):=\eta_0(x)-x \quad\mbox{in}\quad {\Omega}.
\end{cases}
\end{equation}

Now we define the following Lagrangian quantities:
\begin{equation*}
\begin{split}
&v(t,x):=u(t,\eta(t,x)),\quad\, f(t,x):=\rho(t,\eta(t,x)),\\
&\mathcal{A}:=[D\eta]^{-1},\quad J:=det D\eta, \quad \mathcal{N}:=J\mathcal{A}\,e_3.
\end{split}
\end{equation*}

Then, the system \eqref{eq:CNS-Free} becomes
\begin{align}\label{eq:CNS-eqns-1}
\left\{
\begin{aligned}
&\partial_t\xi=v \quad\mbox{in}\quad \Om,\\
&\pa_t f+f\na_{\cA}\cdot v=0 \quad\mbox{in}\quad \Om,\\
&f\pa_t v+\na_{\cA} (f^2)-\na_{\cA}\cdot \mS_{\cA} v=0 \quad\mbox{in}\quad \Om
\end{aligned}
\right.
\end{align}
with boundary conditions
\begin{equation}\label{eq:CNS-eqns-2}
\begin{cases}
&f=0  \quad\mbox{on}\quad \Gamma,\\
&\mS_{\cA} (v)\, \mathcal{N}=0,\quad\mbox{on}\quad \Gamma,\\
&v|_{x_3=0}=0
\end{cases}
\end{equation}
and initial data
\begin{align}\label{eq:CNS-eqns-3}
&(\xi, f,v)|_{t=0}=(\xi_0, \rho_0,u_0).
\end{align}

One may readily check that
\begin{align}
\pa_t J=J\na_{\cA}\cdot v.
\end{align}
Combining with the equation of $f$ in \eqref{eq:CNS-eqns-1}, we find that
\begin{align*}
\pa_t(fJ)=J\pa_t f+f\pa_t J =-Jf\,\na_{\cA}\cdot v+fJ\na_{\cA}\cdot v =0.
\end{align*}
Then we get
\begin{align*}
Jf(t,x)=(Jf)(0,x)=\det (D\eta_0)\rho_0(\eta_0),
\end{align*}
where $\rho_0$ is a given initial density function. We are interested in the initial $\rho_0$ satisfying
\begin{align}
&\rho_0(\eta_0)\det (D\eta_0)=\overline{\rho}(x),\label{assum: brho1}\\
&C^{-1}\,d(x)\leq \brho(x)\leq C\,d(x),\label{assum: brho3}\\
&|\na \brho|\leq C,~|\brho^{-1}\nabla_h^k \brho|\leq C_k\label{assum: brho2}
\end{align}
with some given function $\brho(x)$ ($x\in \Omega$), for any $k \in \mathbb{N}$ with $\nabla_h=(\pa_1,\pa_2)$, where $d(x)$ is the distance function to the boundary $\{x_3=1\}$.

Thus, we have
\begin{equation}\label{eq: brho}
\begin{split}
Jf=\overline{\rho}(x),
\end{split}
\end{equation}
which implies that
\begin{equation*}\label{f-q-1}
\begin{split}
   f= J^{-1}\,\overline{\rho},\quad q=f^2=J^{-2}\,\overline{\rho}^{2}.
\end{split}
\end{equation*}

Multiplying $J$ on the both side of equation $v$, we deduce the equivalent form of the system \eqref{eq:CNS-eqns-1}-\eqref{eq:CNS-eqns-3} as follows
\begin{equation}\label{eq:CNS1}
\begin{cases}
&\partial_t\xi=v \quad\mbox{in}\quad \Om,\\
&\brho\pa_t v+\na_{J\cA}(J^{-2}\brho^2)-\na_{J\cA}\cdot \mS_{\cA}v=0 \quad\mbox{in}\quad \Om,\\
&\mS_{\cA} (v)\,\mathcal{N}=0,\quad\mbox{on}\quad \Gamma,\\
&v|_{x_3=0}=0,\\
&\xi|_{t=0}=\xi_0,\quad \, v|_{t=0}=v_0  \quad\mbox{in}\quad \Om.
\end{cases}
\end{equation}

\medskip

Next, we give some useful equations which we often use. Since $\cA[D\eta]=I,$ one obtains that
\begin{align}\label{equ:d-A}
\pa_t \cA_i^k=-\cA^{k}_{r}\pa_s v^r\cA_i^s,\quad \pa_l \cA_i^k=-\cA^k_r\pa_l\pa_s\eta^r\cA_i^s.
\end{align}
Differentiating the Jacobian determinant, one can see that
\begin{align}\label{equ:d-J}
\pa_t J=J\cA^{s}_{r}\pa_s v^r,\quad \pa_l J=J\cA^s_r\pa_l\pa_s\eta^r.
\end{align}
Moreover, the following Piola identity we heavily used:
\begin{align}\label{Piola-identity}
\pa_j(J\cA^j_i)=0,
\end{align}
for any $i=1,2,3.$

\subsection{Main results} Before we state our main results, we give some definitions of functional spaces. First, define the operators:
\begin{align}
Z_1\eqdefa\pa_{1},\quad Z_2\eqdefa\pa_{2},\quad Z_3\eqdefa\brho\pa_{3}.
\end{align}
Using $Z^m$ to denote $Z_3^{m_2}Z_h^{m_1}=Z_3^{m_2}Z_1^{m_{11}}Z_2^{m_{12}}$ and $|m|$ to denote $|m|=|m_1|+m_2=m_{11}+m_{12}+m_2.$ Moreover, we use $Z_3^{m_2}$ to denote $\brho^{m_2}\pa_{3}^{m_2}.$ By \eqref{assum: brho1}-\eqref{assum: brho2}, it is easy to see
\begin{align*}
[\pa_3,Z^m]\sim Z^{m-1}\pa_3,~[Z_h,Z_3]\sim Z_3.
\end{align*}

We recall the following conormal Sobolev space introduced by \cite{MR}.
\begin{align*}
& \|v\|_{X^N_{\al}}^2:=\sum_{|m|=0}^N\d^{2|m|} \|\brho^{\al} Z^m v\|_{L^2}^2,\quad \|v\|_{\dot{X}^N_\al}^2:=\sum_{ |m|=1}^N\d^{2|m|}\|\brho^{\al} Z^m v\|_{L^2}^2,
\end{align*}
where $\d $ is a small constant which determined later and $\al\in\mathbb{R}$. In particular, when $\al=0$, we the spaces $X^N_{\alpha}$ and $\dot{X}^N_{\alpha}$ will be denoted by $X^N$ and $\dot{X}^N$ respectively for simplicity.
\medskip

For $T>0$, we define the energy space $E_T$ as
\begin{equation*}
  E_T\eqdefa C([0, T]; X^{12}_{\frac{1}{2}}\cap H^1(\Omega))
\end{equation*}
with the instantaneous energy $\mathcal{E}(t)$ (in terms to the velocity $v$)
\begin{align*}
\mathcal{E}(t)\eqdefa\|v\|_{X^{12}_{\f12}}^2+\|v\|_{H^1}^2,
\end{align*}
and the dissipation $\mathcal{D}(t)$
\begin{align*}
\mathcal{D}(t)\eqdefa\|\na v\|_{X^{12}}^2+\|\brho^{\f12}\pa_t v\|_{L^2}^2.
\end{align*}
Given $\kappa>0$, we also introduce the space $F_{\kappa}$ in terms to the flow map $\eta$ as follows:
\begin{equation*}\label{space-flow-map-1}
\mathcal{F}_{\kappa}=\mathcal{F}_{\kappa}(\Omega)\eqdefa\{\xi\in\,X^{12}\cap\,H^1(\Omega)|\,\,\nabla\xi\in \,X^{12},\quad \overline{\rho}^{-\frac{1}{2}+\kappa}\Delta\xi\in\,L^2\}
\end{equation*}
equipped with the norm
\begin{equation*}
\|\xi\|_{\mathcal{F}_{\kappa}}\eqdefa\|\xi\|_{X^{12}}
+\|\nabla\xi\|_{X^{12}}+\|\overline{\rho}^{-\frac{1}{2}+\kappa}\Delta\xi\|_{L^2}.
\end{equation*}

Now, we are in the position to state our main results:
\begin{theorem}\label{thm: main}
Under the assumptions \eqref{assum: brho1}-\eqref{assum: brho2}, assume that  there exists a positive number $\sigma_0$ such that
\begin{align}
&dist(\Gamma(0), \Gamma_b)>0,\label{assum: intial 1}\\
&2\sigma_0\leq J_0\leq 3\sigma_0\label{assum: intial 2}.
\end{align}
If the initial data $(v_0,\, \eta_0)\in (X^{12}_{\f12}\cap H^1(\Omega)) \times \mathcal{F}_{\kappa}(\Omega)$ for some $\kappa \in (0, \frac{1}{16})$, then the system \eqref{eq:CNS1} is locally well-posed. More precisely, there exists a positive time $T>0$ such that the system \eqref{eq:CNS1} has a unique solution $(v,\,\eta)\in\,C([0, T]; X^{12}_{\f12}\cap H^1(\Omega)) \times C([0, T]; \mathcal{F}_{\kappa}(\Omega))$ depending continuously on initial data $(v_0,\, \eta_0)\in (X^{12}_{\f12}\cap H^1(\Omega)) \times \mathcal{F}_{\kappa}(\Omega)$, and there hold
\begin{equation}\label{est-v-flow-map}
\begin{split}
 &\sup_{t\in[0,T]}(\|v\|^2_{X^{12}_{\f12}}+\|v\|_{H^1}^2) +\int_0^T (\|\na v\|_{X^{12}}^2+\|\brho^{\f12}\pa_t v\|_{L^2}^2)\, ds\leq C,\\
 &\sup_{t\in[0,T]}\|\xi(t)\|^2_{\mathcal{F}_{\kappa}}\leq C,\quad\,\sigma_0\leq \sup_{(t, x)\in[0,T]\times \Omega}J(t, x)\leq 4\sigma_0,
\end{split}
\end{equation}
where $C$ depends on initial data.
\end{theorem}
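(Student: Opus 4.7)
The plan is to reduce Theorem \ref{thm: main} to a closed set of a priori estimates in the energy space $E_T\cap\cF_\kappa$, construct the solution through a linearization--iteration scheme coupled with the Lagrangian identity $\pa_t\xi=v$, and then recover uniqueness and continuous dependence from an energy estimate on the difference of two solutions in a weaker norm. For the iteration step, given $\eta^{(n)}$ with $\sigma_0\le J^{(n)}\le 4\sigma_0$, I would solve the linear degenerate parabolic problem for $v^{(n+1)}$,
\[
\brho\,\pa_t v-\na_{J^{(n)}\cA^{(n)}}\cdot\mS_{\cA^{(n)}}v=-\na_{J^{(n)}\cA^{(n)}}\bigl((J^{(n)})^{-2}\brho^{\,2}\bigr),
\]
with the stress-free condition on $\Gamma$ and no-slip on $\Gamma_b$, and then update $\xi^{(n+1)}$ by integrating $\pa_t\xi=v^{(n+1)}$ in time. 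Well-posedness of this linear step is standard after a parabolic regularization $\e\D\pa_t v$ or a Galerkin truncation; what matters is that the regularized system inherits the a priori bounds described next, and that smallness of $T$ keeps $J^{(n+1)}$ trapped in $[\sigma_0,4\sigma_0]$ via $\pa_t J=J\na_\cA\cdot v$.

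The technical heart is the uniform conormal/weighted energy estimate. Testing against $v$, together with the identity $J^{-1}\brho^{2}\,\na_\cA\cdot v=-\pa_t(J^{-1}\brho^{2})$ obtained from \eqref{equ:d-J} and \eqref{Piola-identity}, produces the basic bound on $\|\brho^{\f12}v\|_{L^2}^2$ and the dissipation $\|\na v\|_{L^2_tL^2}^2$, which also controls the $H^1$ part of $\cE$. For higher regularity I would apply $\d^{|m|}Z^m$ for $1\le|m|\le 12$ to the momentum equation and test against $\d^{|m|}Z^m v$. The commutators $[Z^m,\na_{J\cA}\cdot\mS_\cA]$ contain terms built from $\brho^{-1}\na_h\brho$ and iterated derivatives of $\eta$, all acceptable by \eqref{assum: brho2} and the hypothesis $\eta_0\in\cF_\kappa$; boundary contributions in the integration by parts vanish because $\brho|_\Gamma=0$ and because of the stress-free condition. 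Choosing the smallness parameter $\d$ small compared to the viscosity $\mu$ lets the top-order commutator terms be absorbed into $\|\na v\|_{X^{12}}^2$, yielding
\[
\frac{d}{dt}\bigl(\|v\|_{X^{12}_{\f12}}^2+\text{pressure energy}\bigr)+\|\na v\|_{X^{12}}^2\le P(\cE(t))
\]
for some continuous $P$. Testing the equation against $\pa_t v$ separately yields the $\|\brho^{\f12}\pa_t v\|_{L^2}$ piece of $\cD$; reading the resulting Lam\'e relation as an elliptic problem in the normal direction then recovers the weighted normal-normal second derivative $\|\brho^{-\f12+\kappa}\D\xi\|_{L^2}$, i.e.~the $\cF_\kappa$ part, after integrating $\pa_t\xi=v$ in time.

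The main obstacle, as I see it, is extracting $L^\infty_{t,x}$ control of $\na v$ and its first conormal derivatives without any compatibility condition on $\pa_t v|_{t=0}$: the space $X^{12}_{\f12}$ sits right at the threshold of the anisotropic Sobolev embedding into $W^{1,\infty}$, so one must genuinely combine the conormal bound with the viscous dissipation and the $\brho^{\f12}\pa_t v$ estimate to close the loop without invoking bounds on the acceleration. This is also where the narrow range $\kappa\in(0,\f{1}{16})$ enters, calibrating the weight on $\D\xi$ against the residual singularity at $\{x_3=1\}$. Once the a priori estimate is closed uniformly in the iteration index, Aubin--Lions compactness produces a limit solution, and a difference estimate in a lower-order weighted norm (roughly $H^1$ in $v$ and $H^2$ in $\xi$, with appropriate $\brho$ weights) delivers both uniqueness and continuous dependence on the initial data in the topology announced in \eqref{est-v-flow-map}.
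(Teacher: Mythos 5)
Your proposal is correct and follows essentially the same route as the paper: closed a priori estimates in the conormal space $X^{12}_{1/2}$ combined with the $\brho^{1/2}\pa_t v$ dissipation (to avoid compatibility conditions on the acceleration and recover the weighted normal second derivatives via the frozen-coefficient Lam\'e operator), a linearization with coefficients $\widetilde J,\widetilde\cA$ solved by Galerkin approximation, an iteration scheme closed by the uniform bounds and small time, Aubin--Lions compactness to pass to the limit, and a contraction estimate in the weaker norm $\widetilde Y_T$ for uniqueness and continuous dependence. The only cosmetic difference is that the paper uses Galerkin truncation (with basis functions adapted to the frozen boundary condition) rather than the parabolic regularization you mention as an alternative.
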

\begin{remark}
The assumption \eqref{assum: brho1}-\eqref{assum: brho2} on $\rho_0$ is reasonable. If  $\Om(0)=\Om:=\{x \in \mathbb{T}^2\times \mathbb{R}|\quad 0 <x_3<1\}$ and $\rho_0=dist(x,\pa\Om)\sim x_3(1-x_3)$ then the assumptions \eqref{assum: brho1}- \eqref{assum: brho2} are automatically satisfied .
%Under this condition, $\rho$ satisfies  physical vacuum boundary condition.
\end{remark}

\begin{remark}
In this paper, we consider the case that $\gamma=2$. But our method may still work for all the cases $\gamma>1$.
\end{remark}
\begin{remark}\label{rmk-euler-1}
For any $t \in [0, T]$, since $\sigma_0\leq \sup_{(t, x)\in[0,T]\times \Omega}J(t, x)\leq 4\sigma_0$, the flow-map $\eta(t, x)$ defines a diffeomorphism from the equilibrium domain $\Omega$ to the moving domain $\Omega(t)$ with the boundary $\Gamma(t)$. From this, together with the fact that $\eta_0$ is a diffeomorphism from the equilibrium domain $\Omega$ to the initial domain $\Omega(0)$, we deduce a diffeomorphism from the initial domain $\Omega(0)$ to the evolving domain $\Omega(t)$ for any $t \in [0, T]$. Denote the inverse of the flow map $\eta(t, x)$ by $\eta^{-1}(t, y)$ for $t \in [0, T]$ so that if $y = \eta(t, x)$ for $y \in \Omega(t)$ and $t \in [0, T]$, then $x = \eta^{-1}(t, y) \in \Omega$.
\end{remark}
For the strong solution $(\eta, v)$ obtained in Theorem \ref{thm: main}, and for $y \in \Omega(t)$ and $t \in [0, T]$, we denote that
\begin{equation}\label{euler-qu-1}
\begin{split}
&\rho(t,\,y):=J^{-1}(t, \eta^{-1}(t, y))\brho_0(\eta^{-1}(t, y)),\quad\,u(t,\,y):=v(t,\,\eta^{-1}(t, y)).
\end{split}
\end{equation}
Then the triple $(\rho(t,\,y), u(t, y), \Omega(t))$ ($t \in [0, T]$) defines a strong solution to the free boundary problem \eqref{eq:CNS-Free}. Furthermore, we obtain the following theorem.

\begin{theorem}\label{thm-euler-CNS}
Under the assumptions in Theorem \ref{thm: main}, the free boundary problem \eqref{eq:CNS-Free} is locally well-posed, and the triple  $(\rho(t,\,y), u(t, y), \Omega(t))$ ($t \in [0, T]$) defined in Remark \ref{rmk-euler-1} and \eqref{euler-qu-1} is the unique strong solution to the free boundary problem \eqref{eq:CNS-Free} satisfying $\eta-Id \in\,C([0, T],\, \mathcal{F}_{\kappa})$.
\end{theorem}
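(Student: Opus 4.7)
The plan is to transfer the well-posedness statement of Theorem \ref{thm: main} to Eulerian coordinates through the flow map $\eta(t,\cdot)=x+\xi(t,x)$. First I would show that $\eta(t,\cdot)$ is a $C^1$ diffeomorphism from $\Om$ onto $\Om(t):=\eta(t,\Om)$ for every $t\in[0,T]$: the uniform lower bound $J\geq\sigma_0$ from \eqref{est-v-flow-map} makes $\eta(t,\cdot)$ locally invertible, and since $\eta_0$ is a diffeomorphism onto $\Om(0)$, a continuity/degree argument in $t$ yields global invertibility, with $\Gamma(t)=\eta(t,\{x_3=1\})$ a free surface and the image of $\{x_3=0\}$ remaining fixed at $\Gamma_b$ thanks to the no-slip condition $v|_{x_3=0}=0$. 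The regularity $\xi\in\mathcal{F}_{\kappa}$ supplies enough smoothness of $\eta$ (and hence of $\eta^{-1}$ in the interior) to justify the chain-rule manipulations that follow.

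Once $\eta^{-1}$ is available I would verify that the triple $(\rho,u,\Om(t))$ defined by \eqref{euler-qu-1} satisfies each equation in \eqref{eq:CNS-Free}. The identity $Jf=\brho$ from \eqref{eq: brho}, differentiated in $t$, is exactly the continuity equation written in Lagrangian variables, so pushing it forward by $\eta$ produces $\pa_t\rho+\na\cdot(\rho u)=0$ in $\Om(t)$. The chain-rule identity $\pa_t v=(\pa_t u+u\cdot\na_y u)\circ\eta$, together with $\na_{\cA}=(\na_y)\circ\eta$ and $\mS_{\cA}(v)=\mS(u)\circ\eta$, converts $\eqref{eq:CNS1}_2$ into $\eqref{eq:CNS-Free}_2$ after dividing by $J$. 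The kinematic condition $\mathcal{V}(\Gamma(t))=u\cdot n$ is immediate from $\pa_t\eta=u\circ\eta$; the vanishing of $\rho$ on $\Gamma(t)$ follows from $\brho|_{x_3=1}=0$; and the traction-free condition on $\Gamma(t)$ comes from $\mS_{\cA}(v)\mathcal{N}=0$ on $\{x_3=1\}$ after noting that $\mathcal{N}=J\cA e_3$ is parallel to the outward unit normal of $\Gamma(t)$. The initial data and the no-slip condition on $\Gamma_b$ are preserved by construction.

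For uniqueness, suppose $(\tilde\rho,\tilde u,\tilde\Om(t))$ is another strong solution of \eqref{eq:CNS-Free} whose associated flow map satisfies $\tilde\eta-\mathrm{Id}\in C([0,T],\mathcal{F}_{\kappa})$. Solving $\pa_t\tilde\eta=\tilde u\circ\tilde\eta$ with $\tilde\eta(0,\cdot)=\eta_0$ and setting $\tilde v:=\tilde u\circ\tilde\eta$, $\tilde\xi:=\tilde\eta-x$ gives a Lagrangian solution of \eqref{eq:CNS1} with the same initial data $(\xi_0,v_0)$ and lying in the energy class $E_T\times C([0,T],\mathcal{F}_{\kappa})$. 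The uniqueness part of Theorem \ref{thm: main} then forces $(\tilde\xi,\tilde v)=(\xi,v)$, and pushing forward by $\eta$ we conclude $\tilde\Om(t)=\Om(t)$, $\tilde u=u$, $\tilde\rho=\rho$. Continuous dependence on the Eulerian initial data is obtained by the same pull-back/push-forward procedure applied to the stability estimate packaged into Theorem \ref{thm: main}.

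The delicate point I expect is the up-to-the-free-boundary regularity of $\eta^{-1}$: the space $\mathcal{F}_{\kappa}$ only controls $\Delta\xi$ in a weighted $L^2$ norm that degenerates like $\brho^{-\frac12+\kappa}$ near $\Gamma$, so the chain-rule manipulations above and the matching of the Lagrangian and Eulerian stress boundary conditions must be carried out carefully, using the conormal Sobolev structure of $X^{12}$ together with the uniform two-sided bound on $J$ to handle the behavior of $\cA$, $\mathcal{N}$, and $\eta^{-1}$ up to $\Gamma(t)$. Once this is in place, the remaining translation between \eqref{eq:CNS1} and \eqref{eq:CNS-Free} is essentially bookkeeping.
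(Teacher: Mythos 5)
Your proposal is correct and follows the same route the paper itself implicitly takes: Theorem~\ref{thm-euler-CNS} is presented in the paper as an immediate consequence of Theorem~\ref{thm: main} via the Lagrangian--Eulerian change of variables sketched in Remark~\ref{rmk-euler-1} and \eqref{euler-qu-1}, with no separate proof written out. Your write-up simply makes explicit what the paper leaves as routine: the diffeomorphism property of $\eta(t,\cdot)$ from the uniform bound on $J$, the term-by-term push-forward of \eqref{eq:CNS1} to \eqref{eq:CNS-Free}, and uniqueness by pulling an Eulerian competitor back to a Lagrangian solution in $E_T\times C([0,T],\mathcal{F}_{\kappa})$ and invoking the uniqueness in Theorem~\ref{thm: main}.
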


The rest of the paper is organized as follows. In Section 2, we derive some preliminary estimates. Some necessary {\it a priori} estimates are obtained in Section 3. Finally in Section 4, the proof of Theorem \ref{thm: main} is completed.

Let us complete this section with some notations that we use in this context.

\medbreak \noindent{\bf Notations:} Let $A, B$ be two operators, we denote $[A, B]=AB-BA,$ the commutator between $A$ and $B$. For $a\lesssim b$, we mean that there is a uniform constant $C,$ which
may be different on different lines, such that $a\leq Cb$ and $C_0$ denotes a positive constant depending on the initial data only.

%%%%%%%%%%%%%%%%%%%%%%%%%%%%%%%%%%%%%%%%%%%%%%%%%%%%%%%%%%%%%%
%%%%%%%%%%%%%%%%%%%%%%%%%%%%%%%%%%%%%%%%%%%%%%%%%%%%%%%%%%%%%
\renewcommand{\theequation}{\thesection.\arabic{equation}}
\setcounter{equation}{0}

%%%%%%%%%%%%%%%%%%%%%%%%%%%%%%%%%%%%%%%%%%%%%%%%%%%%%%%%%%%%%%
%%%%%%%%%%%%%%%%%%%%%%%%%%%%%%%%%%%%%%%%%%%%%%%%%%%%%%%%%%%%%%%%%%

\section{Preliminary estimates}

In what follows, we denote by $C$ a positive constant which may depend on initial data $(v_0,\eta_0)$ and $\d$. Besides, we denote by $C_0$ and $c_0$ two generic positive constants with may depend on initial data $(v_0,\eta_0)$ and parameters of the problem, but independent of $\delta$ in the norm $\|\cdot\|_{X^{N}_{\alpha}}$. Those notation are allowed to change from one inequality to the next.
\medskip

We first introduce the following inequality which we heavily use in our paper.
\begin{lemma}(Hardy inequality, \cite{KMP2007})\label{lem: hardy}
For any $\e>0,$ there holds that
\begin{align*}
\|\brho^{-\f12+\e}f\|_{L^2}\leq C_0(\|\brho^{\f12+\e}f\|_{L^2}+\|\brho^{\f12+\e}\na f\|_{L^2}).
\end{align*}

\end{lemma}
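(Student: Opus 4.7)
The plan is to reduce the estimate to a one-dimensional weighted Hardy inequality along each vertical fiber $\{x_h\}\times(0,1)$ and then integrate in $x_h\in\mathbb{T}^2$. By the equivalence $\brho(x)\sim 1-x_3$ provided by \eqref{assum: brho3}, the singular weight $\brho^{-1+2\e}$ (respectively the regular weight $\brho^{1+2\e}$) may, up to multiplicative constants, be replaced by $(1-x_3)^{-1+2\e}$ (respectively $(1-x_3)^{1+2\e}$). So it is enough to prove, for each fixed $x_h$, the one-dimensional inequality
\begin{align*}
\int_0^1 (1-x_3)^{-1+2\e} f^2 \, dx_3 \lesssim |f(x_h,0)|^2 + \int_0^1 (1-x_3)^{1+2\e}|\partial_3 f|^2 \, dx_3,
\end{align*}
and then to control the trace at $x_3=0$.

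The first step is integration by parts in $x_3$. Since $-(2\e)^{-1}(1-x_3)^{2\e}$ is a primitive of $(1-x_3)^{-1+2\e}$ and vanishes at the vacuum boundary $x_3=1$, one obtains
\begin{align*}
\int_0^1 (1-x_3)^{-1+2\e} f^2 \, dx_3 = \frac{|f(x_h,0)|^2}{2\e} + \frac{1}{\e}\int_0^1 (1-x_3)^{2\e} f\,\partial_3 f\, dx_3.
\end{align*}
Cauchy--Schwarz followed by Young's inequality bounds the cross term by $\tfrac12\int_0^1(1-x_3)^{-1+2\e} f^2 + C_\e\int_0^1(1-x_3)^{1+2\e}|\partial_3 f|^2$, and the first piece is absorbed into the left-hand side.

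The main obstacle is the trace $f(x_h,0)$ at the fixed bottom, which need not vanish (unlike at the vacuum boundary, where the weight handles everything automatically). Since $\brho$ is bounded below on $\{x_3\le 1/2\}$, I would control it by a standard trace estimate: take a cutoff $\chi(x_3)$ with $\chi(0)=1$ and $\chi(\tfrac12)=0$, write $|f(x_h,0)|^2=-\int_0^{1/2}\partial_3[\chi(x_3) f^2]\,dx_3$, and expand to get
\begin{align*}
|f(x_h,0)|^2 \lesssim \int_0^{1/2}(|f|^2+|\partial_3 f|^2)\,dx_3 \lesssim \int_0^1 \brho^{1+2\e}(|f|^2+|\partial_3 f|^2)\,dx_3,
\end{align*}
using $\brho^{1+2\e}\gtrsim 1$ on the lower half of the slab.

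Integrating the resulting one-dimensional bound over $x_h\in\mathbb{T}^2$ and combining the two displays yields the claim, with a constant $C_0\sim\e^{-2}$ that reflects the borderline nature of the inequality as $\e\to 0$. The statement is classical (cf.\ \cite{KMP2007}); the role of the sketch above is only to make the weighted structure and the management of the non-vanishing bottom trace explicit in the geometry of $\Omega$.
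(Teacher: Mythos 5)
The paper states this lemma and attributes it to \cite{KMP2007} without supplying a proof, so there is no ``paper's approach'' to compare against; your task here is really to give a self-contained derivation, and you have done that correctly.

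Your argument is the classical one-dimensional weighted Hardy inequality by integration by parts, fibered over $x_h\in\mathbb{T}^2$. The use of the equivalence $\brho\sim 1-x_3$ from \eqref{assum: brho3} is the right reduction, the primitive $-(2\e)^{-1}(1-x_3)^{2\e}$ indeed vanishes at the degenerate boundary $x_3=1$, the Young-inequality absorption is legitimate because $(1-x_3)^{-1+2\e}$ is integrable for $\e>0$ (so the left-hand side is finite for, say, $f\in C^\infty(\bar\Omega)$, and one passes to the limit by density), and the cutoff trace estimate at the non-degenerate bottom $x_3=0$, where $\brho$ is bounded below, correctly closes the boundary term. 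Since you only use $\partial_3 f\le|\nabla f|$, the result is in fact slightly stronger than stated. Two tiny bookkeeping remarks, neither of which affects correctness: the constant in the squared inequality scales like $\e^{-2}$, so the stated $L^2$ inequality has $C_0\sim\e^{-1}$, and one should note explicitly (as you implicitly do) that $C_0$ is allowed to depend on $\e$ but not on $\d$, which is consistent with the paper's conventions and with how the lemma is later invoked with $\e=\f{1}{44}$, $\e=\kappa$, etc.
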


\medskip

 With Hardy inequality in hand, we have the following interpolation equalities.

\begin{lemma}\label{lem: interpolation}
For any $\kappa\in(0,\f1{16})$, there hold that
 for $0\leq \ell\leq 6$
 \begin{align}\label{est: interpolation 1}
 \d^{|\ell|}\|Z^\ell \na f\|_{L^\infty_{x_3}(L^2_h)}\leq C(\|\na f\|_{X^{12}}+\|\brho^{-\f12+\kappa}\tri f\|_{L^2}).
 \end{align}
and for $0\leq \ell\leq 4$
\begin{align}\label{est: interpolation 2}
\d^{|\ell|} \|Z^\ell \na f\|_{L^\infty}\leq C_0(\|\na f\|_{X^{12}}+\|\brho^{-\f12+\kappa}\tri f\|_{L^2}).
 \end{align}

\end{lemma}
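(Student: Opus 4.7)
The plan is to combine a one-dimensional Sobolev embedding in the normal direction with a two-dimensional Sobolev embedding on the horizontal torus, managing the weight $\brho^{-\frac12+\kappa}$ via the pointwise bound $\brho\le C$ and the Hardy inequality of Lemma~\ref{lem: hardy}.

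For \eqref{est: interpolation 1}, I would start from the one-dimensional Sobolev bound
\begin{align*}
\|g\|_{L^\infty_{x_3}}^{2}\lesssim \|g\|_{L^2_{x_3}}^{2}+\|g\|_{L^2_{x_3}}\|\pa_3 g\|_{L^2_{x_3}}
\end{align*}
applied pointwise in $x_h$ to $g=Z^\ell\na f$, then integrate in $x_h$ and use Cauchy--Schwarz to get
\begin{align*}
\d^{2\ell}\|Z^\ell\na f\|_{L^\infty_{x_3}L^2_h}^{2}\lesssim \d^{2\ell}\|Z^\ell\na f\|_{L^2}^{2}+\d^{2\ell}\|Z^\ell\na f\|_{L^2}\|\pa_3 Z^\ell\na f\|_{L^2}.
\end{align*}
The first term is dominated by $\|\na f\|_{X^{12}}^{2}$ since $\ell\le 6\le 12$. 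The second reduces to controlling $\d^\ell\|\pa_3 Z^\ell\na f\|_{L^2}$: I would commute $\pa_3$ past $Z^\ell$ via $[\pa_3,Z^m]\sim Z^{m-1}\pa_3$, then split $\pa_3\na f=(\na_h\pa_3 f,\,\pa_3^{2}f)$. The mixed derivative $\na_h\pa_3 f=Z_h(\na f)_3$ is already conormal, so $Z^\ell\na_h\pa_3 f$ is a conormal derivative of $\na f$ of order $\ell+1\le 12$ and is absorbed into $\|\na f\|_{X^{12}}$ (with a fixed negative power of $\d$ lumped into $C$). The pure normal piece I would handle through $\pa_3^{2}f=\Delta f-\pa_h^{2}f$: the $\pa_h^{2}f$ part is tangential, while $\|Z^\ell\Delta f\|_{L^2}$ is estimated by a commutator argument using $[\Delta,Z_h]=0$ and the explicit formula $[\Delta,Z_3]g=(\Delta\brho)\pa_3 g+2\na\brho\cdot\na\pa_3 g$, whose coefficients are controlled by \eqref{assum: brho2}. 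Reducing in this way one is eventually left with $\|\Delta f\|_{L^2}$, which satisfies $\|\Delta f\|_{L^2}\lesssim \|\brho^{-\frac12+\kappa}\Delta f\|_{L^2}$ because $\brho\le C$ and $-\tfrac12+\kappa<0$, so $\brho^{-\frac12+\kappa}$ is bounded below by a positive constant. The Hardy inequality is invoked to pass between weighted and unweighted $L^2$ norms of subordinate lower-order terms.

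For \eqref{est: interpolation 2}, I would then combine \eqref{est: interpolation 1} with a Gagliardo--Nirenberg-type embedding on $\mathbb{T}^2_{x_h}$ such as
\begin{align*}
\|g\|_{L^\infty_{x_h}}^{2}\lesssim \|g\|_{L^2_{x_h}}\|\na_h^{2}g\|_{L^2_{x_h}}+\|g\|_{L^2_{x_h}}^{2},
\end{align*}
taken inside the $L^\infty_{x_3}$ norm, so that each horizontal derivative turns into a $Z_h$. The right-hand side becomes a sum of $\d^\ell\|Z^{\ell'}\na f\|_{L^\infty_{x_3}L^2_h}$ with $|\ell'|\le \ell+2$; plugging in \eqref{est: interpolation 1} requires $\ell+2\le 6$, which is the range $\ell\le 4$ stated in the lemma. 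A careful distribution of $\d$-weights through the geometric mean in Gagliardo--Nirenberg keeps the constant independent of $\d$, giving the constant $C_0$ in \eqref{est: interpolation 2}.

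The main obstacle is the commutator bookkeeping for $Z^\ell\Delta f$ in \eqref{est: interpolation 1}: one must track the precise structure of $[Z^\ell,\Delta]$ and verify that each bracket term is either tangential, hence controlled by $\|\na f\|_{X^{12}}$, or carries a factor of $\brho$ (or $\na_h\brho/\brho$) that absorbs the $\brho^{-1}$ produced when $\pa_3$ is rewritten as $\brho^{-1}Z_3$. Assumptions \eqref{assum: brho1}--\eqref{assum: brho2} on $\brho$ are tailored precisely so that this bookkeeping closes.
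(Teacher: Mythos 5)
The decisive step in this lemma is not commutator bookkeeping but an \emph{interpolation in the weighted conormal scale}, and that step is absent from your sketch. After the one-dimensional Sobolev embedding you correctly reduce to controlling $\delta^\ell\|Z^\ell\Delta f\|_{L^2}$ (equivalently, with the paper's weighted embedding, $\delta^i\|\brho^{21/44}Z^i\Delta f\|_{L^2}$). At that point you claim that commuting $Z$'s through $\Delta$ "eventually" reduces everything to $\|\Delta f\|_{L^2}$, which is then trivially dominated by $\|\brho^{-1/2+\kappa}\Delta f\|_{L^2}$. This is false. Since $[Z_h,\Delta]=0$, one has $Z_h^\ell\Delta f=\Delta Z_h^\ell f$ exactly; no commutator lowers the order, and $\|\Delta\pa_h f\|_{L^2}$ is \emph{not} controlled by $\|\brho^{-1/2+\kappa}\Delta f\|_{L^2}+\|\na f\|_{X^{12}}$. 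The quantity $\|\na f\|_{X^{12}}$ only controls $\|\brho\,Z^m\Delta f\|_{L^2}$-type expressions (weight $+1$), because the normal second derivative only becomes conormal after absorbing one power of $\brho$; trying to strip the weight via Hardy's inequality (Lemma~\ref{lem: hardy}) costs one more derivative and does not close.

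What the paper actually does is interpolate the intermediate quantity $\delta^i\|\brho^{21/44}Z^i\Delta f\|_{L^2}$ between the two available endpoints, namely $\|\brho^{-1/2+\kappa}\Delta f\|_{L^2}$ (weight $-\tfrac12+\kappa$, zero $Z$'s) and $\|\Delta f\|_{\dot X^{11}_1}$ (weight $+1$, up to eleven $Z$'s, dominated by $\|\na f\|_{X^{12}}$), with exponents $\tfrac{4}{11}$ and $\tfrac{7}{11}$. The particular weight $\tfrac{21}{44}$ in the weighted embedding and the restriction $\kappa<\tfrac{1}{16}$ are chosen precisely so that this interpolation is admissible; a separate integration-by-parts argument handles the borderline case $i=6$. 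Your outline contains none of this: it neither identifies the two endpoint norms nor explains how a zero-weight (or $\tfrac{21}{44}$-weight) norm of $Z^i\Delta f$ sits between them. The argument for \eqref{est: interpolation 2} from \eqref{est: interpolation 1} via a 2-D embedding on $\mathbb{T}^2_h$ is fine and matches the paper, but the proof of \eqref{est: interpolation 1} as you have sketched it does not go through.
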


\begin{proof}
For  $0\leq \ell\leq 6$, thanks to Sobolev embedding theorem and Lemma \ref{lem: hardy}, we have
\begin{align*}
 &\d^{|\ell|}\|Z^\ell \na f\|_{L^\infty_{x_3}(L^2_h)}
 \leq  C_0 \d^{|\ell|}(\|\brho^{-\f{21}{44}}Z^\ell \na f\|_{L^2_{x_3}(L^2_h)}+ \|\brho^{\f{21}{44}}\pa_3 Z^\ell \na f\|_{L^2_{x_3}(L^2_h)}) \\
 &\leq C_0 \d^{|\ell|}(\|\brho^{\f{23}{44}}Z^\ell \na f\|_{L^2}+\|\brho^{\f{23}{44}}\na Z^\ell \na f\|_{L^2}+ \sum_{i=0}^\ell(\|\brho^{\f{21}{44}}Z^{i+1} \na f\|_{L^2}+ \|\brho^{\f{21}{44}}Z^i\tri f\|_{L^2})\\
 &\leq C_0\|\na f\|_{X^{12}}+C_0 \d^{|\ell|} \sum_{i=0}^\ell  \|\brho^{\f{21}{44}}Z^i\tri f\|_{L^2}.
\end{align*}
Thanks to $|Z \bar\rho|\leq C\bar\rho$, we deduce from integration by parts that
\beno
 \delta^i\|\brho^{\f{21}{44}}Z^i\tri f\|_{L^2}&\leq& C_0\|\brho^{-\f12+\kappa}\tri f\|_{L^2}^{1-\f{7}{11}} \|\tri f\|_{\dot{X}^{11}_1}^{\f{7}{11}}\\
 &\leq& C_0(\|\brho^{-\f12+\kappa}\tri f\|_{L^2}+ \|\tri f\|_{\dot{X}^{11}_1}),\quad \forall\,\,\, i\leq 5,
\eeno
where we used that $$\f{14}{22}+(-\f12+\kappa)\f{4}{11}\leq \f{21}{44}<\f12.$$

While by using integration by parts again, one can see
\beno
 \delta^{12}\|\brho^{\f{21}{44}}Z^6\tri f\|^2_{L^2}&=&\delta^{12}\int_{\Om}\bar\rho^{\f{21}{22}} Z^6\tri f\cdot Z^6\tri f\, dx\\
 &\leq& C_0 \delta\|\tri f\|_{X^{11}_1}(\|\bar\rho^{-\f{1}{22}}\tri f\|_{L^2}+\|\bar\rho^{-\f{1}{22}}Z\tri f\|_{L^2}).
\eeno
Next, we deal with the last term in the above inequality. In fact, we have
 \begin{equation*}
   \begin{split}
 &\|\bar\rho^{-\f{1}{22}}Z\tri f\|_{L^2}^2=\int_{\Om}\bar\rho^{-\f{1}{11}}Z\tri f\cdot Z\tri f\,dx \\
 &\leq  C_0\|\rho^{-\f12+\kappa}\tri f\|_{L^2} \,\sum_{k=0}^2\|\rho^{\f12-\f{1}{11}-\kappa} Z^k\tri f \|_{L^2}\\
 &\leq  C_0\|\rho^{-\f12+\kappa}\tri f\|_{L^2} (\|\tri f\|_{X^{11}_1}+C_0\|\brho^{-\f12+\kappa}\tri f\|_{L^2}),
   \end{split}
 \end{equation*}
which implies
 \begin{equation*}
   \begin{split}
 &\|\bar\rho^{-\f{1}{22}}Z\tri f\|_{L^2}\leq  C_0(\|\tri f\|_{X^{11}_1}+C_0\|\brho^{-\f12+\kappa}\tri f\|_{L^2}).
   \end{split}
 \end{equation*}
Combining all the above estimates, we get that for $0\leq \ell\leq 6$
\begin{align*}
 \d^{|\ell|}\|Z^\ell \na f\|_{L^\infty_{x_3}(L^2_h)}
   \leq C_0(\|\na f\|_{X^{12}}+\|\brho^{-\f12+\kappa}\tri f\|_{L^2}),
\end{align*}
that is, the inequality \eqref{est: interpolation 1} holds.

\medskip

The second inequality \eqref{est: interpolation 2} comes from Sobolev embedding theorem and \eqref{est: interpolation 1}
\begin{align*}
\d^{|\ell|}\|Z^\ell \na f\|_{L^\infty_x}\leq C_0\d^{|\ell|} \|Z^\ell \na f\|_{L^\infty_{x_3}H^2_h}\leq  C_0(\|\na f\|_{X^{12}}+\|\brho^{-\f12+\kappa}\tri f\|_{L^2}).
\end{align*}
 for $0\leq \ell\leq 4$, which ends the proof of Lemma \ref{lem: interpolation}.
\end{proof}

\medskip

To deal with nonlinear term, we need the following product estimates:
\begin{lemma}\label{lem: product2}
It holds that
\begin{align}\label{lem: pro1}
\|g~f\|_{X^{12}}\leq&  C_0\|g\|_{X^{12}} \sum_{|\ell|\leq 6}\d^{|\ell|} \|Z^\ell f\|_{L^\infty_{x_3}(L^2_h)}+C_0 \| f\|_{X^{12}}\sum_{|\ell|\leq 6}\d^{|\ell|} \|Z^\ell g\|_{L^\infty_{x_3}(L^2_h)}.
\end{align}
and
\begin{align}\label{lem: pro2}
\sum_{0\leq|j|\leq 1}\|g~Z^{j}f\|_{X^{11}}\leq& C_0\|g\|_{X^{11}} \sum_{|\ell|\leq 6}\d^{|\ell|} \|Z^\ell f\|_{L^\infty_{x_3}(L^2_h)}+C_0\| f\|_{X^{12}}\sum_{|\ell|\leq 6}\d^{|\ell|}  \|Z^\ell g\|_{L^\infty_{x_3}(L^2_h)},
\end{align}
where the constants $C$ may depend on $\delta$.
\end{lemma}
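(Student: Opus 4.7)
The plan is to expand $Z^m(gf)$ by the Leibniz formula and, for each resulting bilinear piece, place the factor carrying fewer conormal derivatives into the $L^\infty_{x_3}(L^2_h)$ slot and the other into the $X^{12}$ slot. Since $Z_1=\pa_1$, $Z_2=\pa_2$ are plain partial derivatives and $Z_3=\brho\,\pa_3$ acts as a derivation on products (the coefficient $\brho$ depends only on $x$), the standard Leibniz rule applies:
\[
Z^m(gf)\;=\;\sum_{j\le m}\binom{m}{j}\,Z^{j}g\cdot Z^{m-j}f,
\]
with $j\le m$ understood componentwise under the ordering $Z^j=Z_3^{j_2}Z_1^{j_{11}}Z_2^{j_{12}}$. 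For $|m|\le 12$ at least one of $|j|,|m-j|$ is $\le 6$, and by the symmetry of \eqref{lem: pro1} under $(f,g)\mapsto(g,f)$ it suffices to handle the case $|m-j|\le 6$, which yields the first summand.

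To bound $\delta^{|m|}\|Z^{j}g\cdot Z^{m-j}f\|_{L^2}$ I split further. If $|m-j|\le 4$, apply H\"older as $L^\infty_x\cdot L^2_x$ and the horizontal Sobolev embedding $H^2(\mathbb{T}^2)\hookrightarrow L^\infty(\mathbb{T}^2)$:
\[
\|Z^{m-j}f\|_{L^\infty_x}\;\le\;C\sum_{|\beta|\le 2}\|Z_h^\beta Z^{m-j}f\|_{L^\infty_{x_3}(L^2_h)};
\]
since $|m-j|+|\beta|\le 6$, this is dominated by $\sum_{|\ell|\le 6}\delta^{|\ell|}\|Z^\ell f\|_{L^\infty_{x_3}(L^2_h)}$ up to $\delta$-dependent constants, while $\|Z^j g\|_{L^2}$ is absorbed into $\|g\|_{X^{12}}$ (likewise). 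If $5\le|m-j|\le 6$, then $|j|\le 7$, and I use the mixed-norm H\"older
\[
\|Z^{j}g\cdot Z^{m-j}f\|_{L^2_x}\;\le\;\|Z^{j}g\|_{L^2_{x_3}(L^\infty_h)}\,\|Z^{m-j}f\|_{L^\infty_{x_3}(L^2_h)},
\]
followed by $\|Z^{j}g\|_{L^2_{x_3}(L^\infty_h)}\le C\sum_{|\beta|\le 2}\|Z_h^\beta Z^{j}g\|_{L^2}$ by the same embedding; the count $|j|+|\beta|\le 9\le 12$ keeps the $g$-factor within $\|g\|_{X^{12}}$, and the $f$-factor is already in the target mixed norm.

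Summing the Leibniz terms over $|m|\le 12$ yields \eqref{lem: pro1}, with constants depending on $\delta$ as permitted by the statement. The estimate \eqref{lem: pro2} follows by the identical scheme applied to the product $g\cdot Z^j f$ with $|j|\le 1$ and outer $Z^m$ of length $|m|\le 11$: the total derivative count on $f$ is at most $|m-m_1|+|j|\le 12$, so $f$ is still accommodated by $\|f\|_{X^{12}}$, while the $g$-factor of length $\le 11$ fits into $\|g\|_{X^{11}}$. I do not anticipate a serious obstacle beyond the derivative bookkeeping: the two budgets $|m-j|+2\le 6$ (first subcase) and $|j|+2\le 12$ (second subcase) leave room precisely because the split is placed at $|m-j|\in\{4,5\}$, and the commutators $[Z_h,Z_3]$ are bounded multiples of $Z_3$ by \eqref{assum: brho2}, so any rearrangement of $Z$-factors inside $Z^m$ produces only lower-order Leibniz terms already handled by the same argument.
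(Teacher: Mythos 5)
Your proof is correct and follows the same strategy as the paper: expand $Z^m(gf)$ by Leibniz, then split the resulting bilinear terms at $|m-j|\le 4$ (H\"older $L^\infty_x\cdot L^2_x$ with the horizontal embedding $H^2(\mathbb{T}^2)\hookrightarrow L^\infty(\mathbb{T}^2)$), $5\le|m-j|\le 6$ (mixed-norm H\"older $L^2_{x_3}(L^\infty_h)\cdot L^\infty_{x_3}(L^2_h)$), and $|m-j|\ge 7$ (symmetry), which is exactly the paper's three cases $|m_1|\ge 8$, $6\le|m_1|\le 7$, $|m_1|\le 5$ re-indexed by the factor on $f$ instead of on $g$. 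The minor extra remarks you add (that $Z_3=\brho\pa_3$ is a derivation so the Leibniz formula is legitimate, and that $[Z_h,Z_3]\sim Z_3$) are harmless clarifications rather than a change of route.
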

\begin{proof}
By the Leibnitz formula, one can see
\begin{align*}
\|g~f\|_{X^{12}}=\Big(\sum_{|m|=0}^{12}\d^{2|m|}\|Z^m(g~f)\|_{L^2}^2\Big)^{\f12}\leq C_0\sum_{|m_1|+|m_2|=0}^{12}\d^{|m_1|+|m_2|}\|Z^{m_1}g~Z^{m_2}f\|_{L^2}.
\end{align*}

Now, we focus on the most difficulty case: $|m_1|+|m_2|=12$. The others can be treated by a similar way. We divide its proof into three cases.

{\bf Case 1.} $8\leq |m_1|\leq 12$.  By H\"{o}lder's inequality, we get
\beno
\d^{12}\|Z^{m_1}g~ Z^{m_2}f\|_{L^2}& \leq& \d^{12} \|Z^{m_1}g \|_{L^2}\|Z^{m_2}f\|_{L^\infty}
\leq C_0\d^{12} \|Z^{m_1}g \|_{L^2}\|Z^{m_2}f \|_{L^\infty_{x_3}(H^2_h)}\\
&\leq& C_0\|g\|_{X^{12}} \sum_{|\ell|\leq 6}\|Z^\ell f\|_{L^\infty_{x_3}(L^2_h)}\d^{|\ell|},
\eeno
where we used $|m_2|+2\leq 6$.

{\bf Case 2.}   $6\leq |m_1|\leq 7$. Thanks to Sobolev embedding theorem and H\"{o}lder's inequality, we get
\beno
\d^{12}\|Z^{m_1}g~ Z^{m_2}f\|_{L^2}& \leq& \d^{12} \|Z^{m_1}g \|_{L^2_{x_3}(L^\infty_{h})}\|Z^{m_2}f\|_{L^\infty_{x_3}(L^2_h)}
\\
&\leq& C_0\|g\|_{X^{12}} \sum_{|\ell|\leq 6}\|Z^\ell f\|_{L^\infty_{x_3}(L^2_h)}\d^{|\ell|}.
\eeno

{\bf Case 3.}   $ 0\leq |m_1|\leq 5$.
 For this case, we only need to change the position of $f$ and $g$ and apply the same argument as the above two cases to get that
\beno
\d^{12}\|Z^{m_1}g~ Z^{m_2}f\|_{L^2}&\leq& C_0\|f\|_{X^{12}} \sum_{|\ell|\leq 6}\d^{|\ell|}\|Z^\ell g\|_{L^\infty_{x_3}(L^2_h)}.
\eeno

\medskip

Collecting all cases together, we obtain
\begin{align*}
\|g~f\|_{X^{12}}\leq&  C_0\|g\|_{X^{12}} \sum_{|\ell|\leq 6}\d^{|\ell|}\|Z^\ell f\|_{L^\infty_{x_3}(L^2_h)}+C_0\sum_{|\ell|\leq 6}\d^{|\ell|} \|Z^\ell g\|_{L^\infty_{x_3}(L^2_h)}\| f\|_{X^{12}},
\end{align*}
which follows \eqref{lem: pro1}.

Next, since we the highest order in \eqref{lem: pro2} is 11, we may readily verify \eqref{lem: pro2} by the same process above, which ends the proof of Lemma \ref{lem: product2}.
\end{proof}

\medskip

We introduce a new quantity $\frak{ D}(v)(t)$ which controls $\|\na v\|_{L^\infty}$ from Lemma \ref{lem: interpolation}:
\begin{align}\label{def: D(v)(t)}
\frak{ D}(v)(t)\eqdefa \|\na v(t)\|_{X^{12}}+\|\brho^{-\f12+\kappa}\tri v(t)\|_{L^2}.
\end{align}

In what follows, $\mathcal{P}(\cdot)$ stands for some polynomial function which coefficients may depend on $\d$.

\medskip

\begin{lemma}\label{lem: cA,JcA}
Assume that
\begin{align*}
&\xi_0\in \mathcal{F}_{\kappa}, \quad \|\frak{ D}(v)\|_{L^2(0,  T)}\leq \frak{C},\quad \sigma_0\leq J\leq 4\sigma_0.
\end{align*}
%Then, there exists a small constant $T\leq \widetilde{T}$ which depends on the initial data and $\frak{C}$ such that for any $t\in[0, T]$, the following holds
Then there hold that for any $t \in [0, T]$
\begin{equation}\label{est-JA-1}
  \begin{split}
&\sum_{0\leq |\ell|\leq 6}\d^{|\ell|}\|Z^\ell(J\cA)(t)\|_{L^\infty_{x_3}(L^2_h)}\leq C_0(1+t^{\f12}\mathcal{P}(\frak{C})),\\
&\sum_{0\leq |\ell|\leq 6}\d^{|\ell|} \|Z^\ell \cA(t)\|_{L^\infty_{x_3}(L^2_h)} \leq C_0(1+t^{\f12}\mathcal{P}(\frak{C})),\quad\|\na v:\na v(t)\|_{X^{12}}\leq C_0\frak{ D}(v)^2(t),\\
&\|J\cA (t)\|_{X^{12}}\leq  C_0(1+t^{\f12}\mathcal{P}(\frak{C})),\quad \|\cA(t) \|_{X^{12}}\leq C_0(1+t^{\f12}\mathcal{P}(\frak{C})),
  \end{split}
\end{equation}
where the constant $C_0$ depends on $\|\xi_0\|_{\mathcal{F}_{\kappa}}$ and $\sigma_0$.
\end{lemma}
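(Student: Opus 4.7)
My plan is to read off every quantity from the identity $D\eta=I+D\xi$ together with the evolution $\partial_t\xi=v$, and then feed the resulting polynomial/rational expressions into the two technical lemmas already at hand (Lemma~\ref{lem: interpolation} and Lemma~\ref{lem: product2}).

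The first step is to control $D\xi(t)$. Integrating $\partial_t\xi=v$ gives $D\xi(t)=D\xi_0+\int_0^t Dv(s)\,ds$, so
\begin{equation*}
\|D\xi(t)\|_{X^{12}}\le\|D\xi_0\|_{X^{12}}+\int_0^t\|\nabla v(s)\|_{X^{12}}\,ds\le C_0+t^{1/2}\|\mathfrak{D}(v)\|_{L^2(0,T)}\le C_0(1+t^{1/2}\mathfrak{C}).
\end{equation*}
Applying Lemma~\ref{lem: interpolation} pointwise in time to $v$ and using $\xi_0\in\mathcal{F}_{\kappa}$ (which gives exactly the right-hand side of \eqref{est: interpolation 1} for $\nabla\xi_0$), the same integration in time yields
\begin{equation*}
\sum_{0\le|\ell|\le 6}\delta^{|\ell|}\|Z^{\ell} D\xi(t)\|_{L^{\infty}_{x_3}(L^2_h)}\le C_0(1+t^{1/2}\mathfrak{C}).
\end{equation*}

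Next I would express $J\mathcal{A}$ as the cofactor matrix of $D\eta=I+D\xi$; this is a polynomial $I+P(D\xi)$ of total degree at most two in $D\xi$, with universal integer coefficients. Iterating the product estimate \eqref{lem: pro1} and combining with the two bounds above, each factor of $D\xi$ costs a factor $C_0(1+t^{1/2}\mathfrak{C})$ both in $X^{12}$ and in the $L^{\infty}_{x_3}(L^2_h)$ sum. This gives at once
\begin{equation*}
\|J\mathcal{A}(t)\|_{X^{12}}+\sum_{0\le|\ell|\le 6}\delta^{|\ell|}\|Z^{\ell}(J\mathcal{A})(t)\|_{L^{\infty}_{x_3}(L^2_h)}\le C_0(1+t^{1/2}\mathcal{P}(\mathfrak{C})),
\end{equation*}
where $\mathcal{P}$ absorbs the polynomial degree. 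The factor $C_0$ depends on $\|\xi_0\|_{\mathcal{F}_{\kappa}}$ and (through the $\sigma_0$-bounds on $J$, needed below) on $\sigma_0$. The pointwise quadratic identity together with Lemma~\ref{lem: product2} and Lemma~\ref{lem: interpolation} also gives immediately
\begin{equation*}
\|\nabla v:\nabla v\|_{X^{12}}\le C_0\|\nabla v\|_{X^{12}}\Big(\|\nabla v\|_{X^{12}}+\|\bar\rho^{-\frac12+\kappa}\Delta v\|_{L^2}\Big)\le C_0\,\mathfrak{D}(v)^2(t).
\end{equation*}

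The remaining obstacle, and the only non-routine step, is the passage from $J\mathcal{A}$ to $\mathcal{A}=(J\mathcal{A})/J$, because one must place $1/J$ into the conormal algebra. My plan is to use the running hypothesis $\sigma_0\le J\le 4\sigma_0$ together with a composition/Faà~di~Bruno argument: writing $1/J=g(J)$ with $g$ smooth on $[\sigma_0,4\sigma_0]$, every $Z$-derivative of $1/J$ is a finite sum of products of $Z$-derivatives of $J$ times powers of $1/J$, so a further iteration of Lemma~\ref{lem: product2} (with the analogous $L^{\infty}_{x_3}(L^2_h)$ bounds on $Z^{\ell}J$ obtained exactly as for $J\mathcal{A}$) yields $\|1/J\|_{X^{12}}$ and its $L^{\infty}_{x_3}(L^2_h)$ counterpart bounded by a $\sigma_0$-dependent constant times $C_0(1+t^{1/2}\mathcal{P}(\mathfrak{C}))$. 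One last application of \eqref{lem: pro1} to $\mathcal{A}=J^{-1}\cdot(J\mathcal{A})$ then delivers the two missing estimates on $\mathcal{A}$ in \eqref{est-JA-1}, completing the lemma.
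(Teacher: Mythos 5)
Your proposal follows essentially the same route as the paper's proof: write $J\mathcal{A}$ as the cofactor (degree-two) polynomial in $D\eta=I+D\xi$, control $D\xi(t)=D\xi_0+\int_0^t Dv$ in both the $X^{12}$ norm and the $L^\infty_{x_3}(L^2_h)$ sum via $\xi_0\in\mathcal{F}_\kappa$ and Lemma~\ref{lem: interpolation}, iterate the product estimates of Lemma~\ref{lem: product2}, handle $J^{-1}$ by a Fa\`a di Bruno expansion using $\sigma_0\le J\le 4\sigma_0$, and finally set $\mathcal{A}=J^{-1}(J\mathcal{A})$. (Minor cosmetic point: you correctly describe $J\mathcal{A}$ as the cofactor/adjugate matrix of $D\eta$, whereas the paper's display misprints it as $(D\eta)^{-1}$; the paper's subsequent quadratic expansion is consistent with your reading.)
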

\begin{proof}
Before giving the proof of this lemma, we state some estimates as preliminary. By Lemma \ref{lem: interpolation}, one can prove that
\begin{equation}\label{est: (na v)^2 low}
  \begin{split}
&\sum_{0\leq |\ell|\leq 6}\d^{|\ell|}\|Z^\ell(\na v:\na v)\|_{L^\infty_{x_3}(L^2_h)}\leq C_0\sum_{0\leq |\ell|\leq 6}\|Z^\ell\na v\|_{L^\infty_{x_3}(L^2_h)} \sum_{0\leq |\ell|\leq 4}\d^{|\ell|}\|Z^\ell \na v\|_{L^\infty}\\
&\leq\,C_0(\|\na v\|_{X^{12}}+\|\brho^{-\f12+\kappa}\tri v\|_{L^2})^2\leq\,C_0\frak{ D}(v)^2.
  \end{split}
\end{equation}

Taking $f=g=\na v$ in \eqref{lem: pro1} and using  Lemma \ref{lem: interpolation}, we obtain
\begin{align}\label{est: (na v)^2 high}
\|\na v:\na v\|_{X^{12}}\leq C\|\na v\|_{X^{12}} \sum_{|\ell|\leq 6}\d^{|\ell|}\|Z^\ell \na v\|_{L^\infty_{x_3}(L^2_h)}\leq C\frak{ D}(v)^2.
\end{align}

Now we are in the position to prove the first estimates. Notice that
\begin{align*}
J\cA=(D\eta)^{-1}=(\na \eta_0+\int_{0}^{t}\na v\,ds)^{-1},
\end{align*}
and every entry in $J\cA$ is a linear combination of
$$\na \eta_0,\, \na \eta_0\int_{0}^{t}\na vds,\,(\int_{0}^{t}\na vds)^2.$$
Then, thanks to Lemma \ref{lem: interpolation}-\ref{lem: product2}, \eqref{est: (na v)^2 low} and Minkowski's inequality, one has
\begin{equation}\label{est:JcA-low-1}
  \begin{split}
&\sum_{0\leq |\ell|\leq 6}\d^{|\ell|}\|Z^\ell(J\cA)\|_{L^\infty_{x_3}(L^2_h)}\\
&\leq \sum_{0\leq |\ell|\leq 6}\d^{|\ell|}\|Z^\ell\na \eta_0\|_{L^\infty_{x_3}(L^2_h)}+\sum_{0\leq |\ell|\leq 6}\d^{|\ell|}\|Z^\ell(\na \eta_0\int_{0}^{t}\na vds)\|_{L^\infty_{x_3}(L^2_h)}\\
&\qquad\qquad+\sum_{0\leq |\ell|\leq 6}\d^{|\ell|}\|Z^\ell((\int_{0}^{t}\na vds)^2)\|_{L^\infty_{x_3}(L^2_h)}\\
&\leq C_0\,\|\xi_0\|_{\mathcal{F}_{\kappa}}+C_0\,\|\xi_0\|_{\mathcal{F}_{\kappa}}\,t^{\f12}\|\frak{ D}(v)\|_{L^2_t}+C_0t\|\frak{ D}(v)\|_{L^2_t}^2
\leq  C_0(1+t\,\mathcal{P}(\frak{C})),
  \end{split}
\end{equation}
which proves the first inequality in \eqref{est-JA-1}.

Similarly, we deduce
\begin{align}\label{est: JcA low t}
&\sum_{0\leq |\ell|\leq 6}\d^{|\ell|}\|Z^\ell(\na\eta_0\int_{0}^{t}\na vds)\|_{L^\infty_{x_3}(L^2_h)}+\sum_{0\leq |\ell|\leq 6}\d^{|\ell|}\|Z^\ell((\int_{0}^{t}\na vds)^2)\|_{L^\infty_{x_3}(L^2_h)} \\
\nonumber
&\qquad\leq C_0t^{\f12}\|\frak{ D}(v)\|_{L^2_t}+C_0t\|\frak{ D}(v)\|_{L^2_t}^2\leq C_0t^{\f12} \mathcal{P}(\frak{C}).\nonumber
\end{align}

Recalling the definition of $J$:
\begin{equation*}\label{eq: J}
J=det(\na \eta_0+\int_0^t\na vds),
\end{equation*}
$J$ is a linear combination of the terms
$$(\na \eta_0)^3,\,\na \eta_0(\int_{0}^{t}\na vds)^2,\, (\na \eta_0)^2\int_{0}^{t}\na vds,\,(\int_{0}^{t}\na vds)^3.$$
Hence, similar to the proof of the first inequality in \eqref{est-JA-1} in terms of $J\cA$, we may obtain
\begin{align}\label{est : J low}
\sum_{0\leq |\ell|\leq 6}\d^{|\ell|}\|Z^\ell J\|_{L^\infty_{x_3}(L^2_h)}\leq   C_0(1+t^{\f12}\mathcal{P}(\frak{C})).
\end{align}
Owing to $J\geq \sigma_0$ and  the formula to the composition of two functions, we obtain
\begin{align*}
\sum_{0\leq |\ell|\leq 6}\d^{|\ell|}\|Z^\ell (J^{-1})\|_{L^\infty_{x_3}(L^2_h)}\leq&C_0\sum_{0\leq |\ell|\leq 6}\d^{|\ell|}\|\prod_{\sum_j |k_j| m_j\leq |\ell|}(Z^{k_j}J)^{m_j}\|_{L^\infty_{x_3}(L^2_h)}
\end{align*}
We put $\|\cdot \|_{L^\infty_{x_3}(L^2_h)}$ on the highest order term $Z^{k_j}J$ and put $\|\cdot\|_{L^\infty}$ to other lower terms (not more than order 4) with similar process to \eqref{est: (na v)^2 low}. It follows from Lemma \ref{lem: interpolation} and \eqref{est : J low} that
\begin{align}\label{est: J^-1 low}
\sum_{0\leq |\ell|\leq 6}\d^{|\ell|}\|Z^\ell (J^{-1})\|_{L^\infty_{x_3}(L^2_h)}\leq C_0(1+t^{\f12}\mathcal{P}(\frak{C})).
\end{align}
Therefore, due to \eqref{est:JcA-low-1} and \eqref{est: J^-1 low}, we infer
\begin{equation*}\label{est: cA low}
\begin{split}
\sum_{|\ell|\leq 6}\d^{|\ell|}\|Z^{\ell}\cA\|_{L^\infty_{x_3}(L^2_h)}\leq &C_0+C\sum_{|\ell|\leq 6}\d^{|\ell|}\|Z^{\ell}(J\cA)\|_{L^\infty_{x_3}(L^2_h)}\sum_{|\ell|\leq 4}\d^{|\ell|}\|Z^{\ell}(J^{-1})\|_{L^\infty}\\
&+C\sum_{|\ell|\leq 6}\|Z^{\ell}(J^{-1})\|_{L^\infty_{x_3}(L^2_h)}\d^{|\ell|}\sum_{|\ell|\leq 4}\|Z^{\ell}(J\cA)\|_{L^\infty}\d^{|\ell|}\\
\leq &   C_0(1+t^{\f12}\mathcal{P}(\frak{C})).
\end{split}
\end{equation*}
%which gives the second estimate and thefirst $C_0$ comes from case $J\cA,J^{-1}\sim \na \eta_0.$

 For the high order estimate, similar to the proof of \eqref{est: (na v)^2 high}, by using Lemma \ref{lem: interpolation}, we deduce
 \begin{align}\label{est: JcA high t}
\|\na \eta_0(\int_{0}^{t}\na vds)^2\|_{X^{12}}+\|(\na \eta_0)^2\int_{0}^{t}\na vds\|_{X^{12}}+\|(\int_{0}^{t}\na vds)^3\|_{X^{12}}\leq C_0\,t^{\f12}\mathcal{P}(\frak{C}),
\end{align}
and then
\begin{equation}\label{est: J,cA high}
  \begin{split}
&\|(J,\,J\cA)\|_{X^{12}}\leq C_0\Big(\|\na \eta_0\|_{X^{12}}+\|\na \eta_0(\int_{0}^{t}\na vds)^2\|_{X^{12}}\\
&\qquad\qquad\qquad\qquad\qquad\qquad+\|(\na \eta_0)^2\int_{0}^{t}\na vds\|_{X^{12}}+\|(\int_{0}^{t}\na vds)^3\|_{X^{12}}\Big)\\
&\leq C_0+C_0t^{\f12}\|\frak{ D}(v)\|_{L^2_t}+C_0t\|\frak{ D}(v)\|_{L^2_t}^2+C_0t^\f32\|\frak{ D}(v)\|_{L^2_t}^3\\
&\leq    C_0(1+t^{\f12}\mathcal{P}(\frak{C})).
  \end{split}
\end{equation}

%Due to
%\begin{align*}
%\|J^{-1}\|_{X^{12}}\leq C_0+ C\sum_{|\ell|\leq 12}\d^{|\ell|} \|Z^{\ell}(J^{-1})\|_{L^2}\leq C_0+C\sum_{|\ell|\leq 12} \d^{|\ell|}\|\prod_{\sum_j |k_j| m_j\leq |\ell|}(Z^{k_j}J)^{l_j}\|_{L^2},
%\end{align*}
While thanks to \eqref{est : J low}, \eqref{est: J,cA high} and Lemma \ref{lem: product2},  it follows that
\begin{align*}
\|J^{-1}\|_{X^{12}}\leq   C_0(1+t^{\f12}\mathcal{P}(\frak{C})),
\end{align*}
and
\begin{align*}\label{est: cA high1}
\|\cA\|_{X^{12}}=\|J\cA~J^{-1}\|_{X^{12}} \leq    C_0(1+t^{\f12}\mathcal{P}(\frak{C})),
\end{align*}
which completes the proof of Lemma \ref{lem: cA,JcA}.

\end{proof}

\medskip

Based on the above lemma, we may get the following estimates:
\begin{lemma}\label{lem:high nav cA}
Under the assumptions in Lemma \ref{lem: cA,JcA}, there hold
\begin{equation}\label{est-high-order}
\begin{split}
&\sum_{0\leq|j|\leq 1}\|Z^j(J\cA)~\na v \|_{X^{11}}\leq  C_0\|\na v\|_{X^{11}}+t^\f12\mathcal{P}(\frak{C})\frak{ D}(v),\\
&\sum_{0\leq|j|\leq 1}\|Z^j\cA~\na v \|_{X^{11}}\leq C_0\|\na v\|_{X^{11}}+t^\f12\mathcal{P}(\frak{C})\frak{ D}(v),\\
&\|\cA~\na v\|_{X^{12}}\leq C_0\|\na v\|_{X^{12}}+t^\f12\mathcal{P}(\frak{C})\frak{ D}(v),\\
&\|J~\mathrm{S}_{\cA} (v)\|_{X^{12}}\leq  C_0\|\na v\|_{X^{12}}+t^\f12\mathcal{P}(\frak{C})\frak{ D}(v).
\end{split}
\end{equation}
%What's more, taking $ t $ small enough such that $Ct^\f12\leq C_0,$ the right hand side of above estimates can be replaced by $C_0\frak{ D}(v)(t)$ roughly.
\end{lemma}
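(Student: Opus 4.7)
The plan is to estimate each of the four products by decomposing $\cA$ (respectively $J\cA$) into its initial part plus a time-integral remainder, and then apply the product inequality of Lemma \ref{lem: product2} together with the bounds already obtained in Lemma \ref{lem: cA,JcA}. Concretely, I write
\begin{equation*}
J\cA=(J\cA)|_{t=0}+\int_0^t\pa_s(J\cA)\,ds,
\end{equation*}
where by \eqref{equ:d-A}--\eqref{equ:d-J} the integrand is a polynomial in $\cA$, $J$ and $\na v$. The initial piece $(J\cA)|_{t=0}$ is a polynomial in $\na\eta_0$ alone, so Lemma \ref{lem: interpolation} and the assumption $\xi_0\in\mathcal{F}_\kappa$ give $\sum_{|\ell|\leq 6}\d^{|\ell|}\|Z^\ell(J\cA)|_{t=0}\|_{L^\infty_{x_3}(L^2_h)}+\|(J\cA)|_{t=0}\|_{X^{12}}\leq C_0$. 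The remainder has been controlled in \eqref{est: JcA low t} and \eqref{est: JcA high t} (after Minkowski in time) by $C_0 t^{\f12}\mathcal{P}(\frak{C})$ in both the $\sum_{|\ell|\leq 6}\d^{|\ell|}\|Z^\ell\cdot\|_{L^\infty_{x_3}(L^2_h)}$ and the $X^{12}$ norms.

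For the first inequality, I apply \eqref{lem: pro2} with $g=Z^j(J\cA)$ (for $|j|\leq 1$) and $f=v$. Feeding in the above splitting, the initial piece contributes $C_0\|\na v\|_{X^{11}}$ (after using Lemma \ref{lem: interpolation} to absorb the $\sum_{|\ell|\leq 6}\d^{|\ell|}\|Z^\ell\na v\|_{L^\infty_{x_3}(L^2_h)}$ factor into $\frak{D}(v)$, which is then bounded by $\|\na v\|_{X^{11}}$ in the corresponding term), while the dynamic piece, which is $O(t^{\f12}\mathcal{P}(\frak{C}))$ on both sides of \eqref{lem: pro2}, contributes $t^{\f12}\mathcal{P}(\frak{C})\frak{D}(v)$. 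The second inequality follows from the first after writing $\cA=J^{-1}(J\cA)$ and controlling $J^{-1}$ exactly as in the proof of Lemma \ref{lem: cA,JcA} (using $J\geq\sigma_0$ and the composition-of-functions formula applied to the bounds on $Z^\ell J$). The third inequality is proved by exactly the same argument but using \eqref{lem: pro1} in place of \eqref{lem: pro2}, since now the highest order is $12$. Finally, since $\mS_{\cA}(v)$ is a linear combination of entries of the form $\cA_i^k\pa_k v^r$, we have $J\mS_{\cA}(v)=\mu(J\cA)^T\!:\!\na v+\mu(\text{transpose term})+(\la+\tfrac23\mu)(J\cA)\!:\!\na v\,\mathbb{I}$, i.e.\ a sum of products of $J\cA$ with $\na v$, and the fourth inequality is immediate from an application of \eqref{lem: pro1} handled as in the first estimate.

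The main obstacle is the bookkeeping at the endpoint orders $|m|=11$ and $|m|=12$, where Lemma \ref{lem: product2} forces us to place the factor carrying $\geq 7$ conormal derivatives in $L^2$ and the other in $\sum_{|\ell|\leq 6}\d^{|\ell|}\|Z^\ell\cdot\|_{L^\infty_{x_3}(L^2_h)}$. When the high-order derivatives land on $\na v$ we use $\|\na v\|_{X^{11}}\leq\frak{D}(v)$; when they land on $J\cA$ we must separately control the initial and the time-integral parts of $J\cA$ in $X^{11}$. The first gives the $C_0\|\na v\|_{X^{11}}$ contribution, while the second, being $O(t^{\f12}\mathcal{P}(\frak{C}))$ in both high and low norms, combines with $\frak{D}(v)$ to produce the $t^{\f12}\mathcal{P}(\frak{C})\frak{D}(v)$ term, so that no term of the form $\frak{D}(v)^2$ appears on the right-hand side. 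This "splitting-then-pairing" procedure is what ensures that the $C_0\|\na v\|_{X^{11}}$ piece does not carry an extra smallness factor and that the dynamic remainder does carry one.
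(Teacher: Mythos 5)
Your overall strategy matches the paper's: decompose $J\cA$ into its time-zero part (a polynomial in $\na\eta_0$) plus a dynamic remainder, apply Lemma~\ref{lem: product2}, and feed in the bounds from Lemmas~\ref{lem: interpolation} and~\ref{lem: cA,JcA}. That is precisely what the paper does, writing $J\cA$ as a linear combination of $\na\eta_0$, $\na\eta_0\int_0^t\na v$, and $(\int_0^t\na v)^2$ and splitting the target norm into three pieces $I_1,I_2,I_3$. So the route is the same.

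However, there is a concrete error in your justification of the initial-piece bound. In applying \eqref{lem: pro2} you should take $g=\na v$ and $f=J\cA$ (not $g=Z^j(J\cA)$, $f=v$; the index $j$ in \eqref{lem: pro2} is already the summation variable, so your assignment does not parse). More importantly, the term coming from high derivatives landing on $J\cA$ is $\|J\cA\|_{X^{12}}\cdot\sum_{|\ell|\le 6}\d^{|\ell|}\|Z^\ell\na v\|_{L^\infty_{x_3}(L^2_h)}$, and the last factor is controlled by Lemma~\ref{lem: interpolation} as $\le C_0\,\frak{D}(v)$. You then assert that $\frak{D}(v)$ ``is then bounded by $\|\na v\|_{X^{11}}$.'' This is backwards: since $\frak{D}(v)=\|\na v\|_{X^{12}}+\|\brho^{-1/2+\kappa}\Delta v\|_{L^2}$, the correct inequality is $\|\na v\|_{X^{11}}\le\frak{D}(v)$, not the reverse, and $\frak{D}(v)$ contains the weighted Laplacian term which is certainly not controlled by $\|\na v\|_{X^{11}}$. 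So your argument produces at best a bound of the form $C_0\|\na v\|_{X^{11}}+C_0\,\frak{D}(v)$ for the initial piece, not the claimed $C_0\|\na v\|_{X^{11}}$. (The paper's own statement ``$I_1\le C_0\|\na v\|_{X^{11}}$'' is given without justification and suffers from the same difficulty; only derivatives of $\na\eta_0$ of order $\le 4$ are pointwise bounded by the $\mathcal{F}_\kappa$ assumption, so the high-order/$L^\infty$ splitting genuinely produces a $\frak{D}(v)$ contribution.) You should either supply a reason why the $\frak{D}(v)$ contribution from the static piece can be discarded, or restate the lemma with $C_0\,\frak{D}(v)$ in place of $C_0\|\na v\|_{X^{11}}$ and verify that the downstream applications in Proposition~\ref{pro1: v } still close with that weaker bound.
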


\begin{proof}
We mainly utilize Lemmas \ref{lem: product2}, \ref{lem: cA,JcA} to prove \eqref{est-high-order}. So one may only focus on the proof of the first inequality in \eqref{est-high-order}, and the proofs of the others are the same as it, whose details will be omitted here.

First, by the definition of $J\cA$, we split $\sum_{0\leq|j|\leq 1}\|\na v ~Z^j(J\cA)\|_{X^{11}}$ into three parts:
\begin{equation}\label{est-high-order-I1-3}
\begin{split}
&\sum_{0\leq|j|\leq 1}\|Z^j(J\cA)~\na v \|_{X^{11}} \leq     C\sum_{0\leq|j|\leq 1}  \|\na v\cdot Z^j\na \eta_0 \|_{X^{11}}\\
& \qquad +C\sum_{0\leq|j|\leq 1} \|\na v\cdot Z^j\na \eta_0\int_{0}^{t}\na vds \|_{X^{11}}\big)+  C\sum_{0\leq|j|\leq 1} \|\na v\cdot Z^j(\int_{0}^{t}\na vds )^2\|_{X^{11}} \triangleq \sum_{i=1}^3 I_i.
\end{split}
\end{equation}

 For $I_1$, we have
\begin{align}\label{I1-1}
I_1\leq C_0\|\na v\|_{X^{11}}.
\end{align}

For $I_2$, taking $g=\na v$ and $f=J\cA$ in \eqref{lem: pro2} in Lemma \ref{lem: product2} to obtain that
\begin{align*}
I_2\leq &C\|\na v \|_{X^{11}}\sum_{|\ell|\leq 6}\|Z^j\na \eta_0\int_{0}^{t}\na vds \|_{L^\infty_{x_3}(L^2_h)}+C \sum_{|\ell|\leq 6}\|Z^\ell \na v\|_{L^\infty_{x_3}(L^2_h)}\|Z^j\na \eta_0\int_{0}^{t}\na vds \|_{X^{12}}.
\end{align*}
Applying Lemma \ref{lem: interpolation} and \eqref{est: JcA low t}, \eqref{est: JcA high t} in Lemma \ref{lem: cA,JcA} to get
\begin{align}\label{I2-1}
I_2\leq t^{\f12} \mathcal{P}(\frak{C})\|\na v \|_{X^{11}}+
Ct^\f12\mathcal{P}(\frak{C})\frak{ D}(v)
\leq\,t^\f12\mathcal{P}(\frak{C})\frak{ D}(v).
\end{align}
Similarly, we have
\begin{align}\label{I3-1}
I_3
\leq&t^\f12\mathcal{P}(\frak{C})\frak{ D}(v).
\end{align}
Plugging the estimates \eqref{I1-1}-\eqref{I3-1} into \eqref{est-high-order-I1-3}, we prove
\begin{align*}
\sum_{0\leq|j|\leq 1}\|Z^j(J\cA)~\na v \|_{X^{11}}\leq& C_0\|\na v\|_{X^{11}}+t^\f12\mathcal{P}(\frak{C})\frak{ D}(v),
\end{align*}
which ends our proof.
\end{proof}

Next we recall a version of Korn's inequality involving only the deviatoric part $\mathbb{D}^0$.
\begin{lemma}\label{lem-korn-2}[Korn's lemma, Theorem 1.1 in \cite{Dain2006} ]
Let $n \geq 3$ and $U$ be a Lipschitz domain in $\mathbb{R}^n$, then there exists a constant $C_0$, independent of $f$, such that
\begin{equation*}\label{korn-2}
\|f\|_{H^1(U)} \leq C_0(\|\mathbb{D}^0(f)\|_{L^2(U)}+\|f\|_{L^2(U)})
\end{equation*}
for all $f \in H^1(U)$.
\end{lemma}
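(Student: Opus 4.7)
The plan is to argue by contradiction using Rellich--Kondrachov compactness combined with a second-order elliptic identity that recovers $\nabla f$ from $\mathbb{D}^0(f)$ modulo two orders of differentiation. The dimensional hypothesis $n \geq 3$ is essential: in $n = 2$ the kernel of $f \mapsto \mathbb{D}^0(f)$ on any open set contains all holomorphic vector fields and is therefore infinite-dimensional, so the stated inequality already fails; for $n \geq 3$ this kernel reduces to the finite-dimensional space of conformal Killing vector fields, which is what permits a compactness closure.

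First I would derive the distributional identity
\begin{equation*}
\Delta f_i \;=\; \partial_j (\mathbb{D}^0(f))_{ij} \;-\; \tfrac{n-2}{n}\,\partial_i \dive f
\end{equation*}
by inserting $\mathbb{D}(f) = \mathbb{D}^0(f) + \tfrac{2}{n}(\dive f)\,I$ into the classical relation $\partial_j(\mathbb{D}(f))_{ij} = \Delta f_i + \partial_i \dive f$, and observe that taking a further divergence produces the scalar elliptic equation
\begin{equation*}
\Delta \dive f \;=\; \tfrac{n}{2(n-1)}\,\partial_i \partial_j (\mathbb{D}^0(f))_{ij}.
\end{equation*}
Combined with the Ne\v{c}as--Lions lemma on Lipschitz domains --- a distribution whose gradient lies in $H^{-1}$ is itself in $L^2$ with the corresponding norm estimate --- these identities yield the negative-norm Korn bound
\begin{equation*}
\|\nabla f\|_{L^2(U)} \;\leq\; C_0 \bigl(\|\mathbb{D}^0(f)\|_{L^2(U)} + \|f\|_{H^{-1}(U)}\bigr).
\end{equation*}

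To close the argument, suppose the desired inequality fails: there exist $\{f_k\} \subset H^1(U)$ with $\|f_k\|_{H^1(U)} = 1$ and $\|\mathbb{D}^0(f_k)\|_{L^2} + \|f_k\|_{L^2} \to 0$. By Rellich--Kondrachov a subsequence converges to some $f$ strongly in $L^2$ (hence in $H^{-1}$) and weakly in $H^1$, and $f \equiv 0$ from the $L^2$ hypothesis. Applying the negative-norm bound above to each $f_k$ then forces $\|\nabla f_k\|_{L^2} \to 0$, contradicting $\|f_k\|_{H^1(U)} = 1$. The main obstacle in this plan is the Ne\v{c}as--Lions negative-norm step, whose proof rests on the Bogovski\u{\i} solution of $\dive u = g$ with vanishing boundary trace and genuinely uses the Lipschitz regularity of $\partial U$; once it is granted, the remainder is routine functional analysis. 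An alternative route, closer to Dain's original argument, avoids Ne\v{c}as--Lions entirely by identifying $\ker(\mathbb{D}^0) \cap H^1(U)$ with the restrictions of conformal Killing fields on $\mathbb{R}^n$ --- a finite-dimensional space precisely when $n \geq 3$ --- and running the compactness argument in the quotient by this finite-dimensional subspace.
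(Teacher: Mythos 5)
The paper gives no proof of this lemma --- it is cited verbatim from Theorem 1.1 of \cite{Dain2006} --- so only the internal correctness of your argument is at issue. Your two differential identities are correct, and you are right that $n\ge 3$ enters through the finite dimensionality of the conformal Killing fields. The gap is in the central claim that the identities together with Ne\v{c}as--Lions produce $\|\nabla f\|_{L^2}\le C(\|\mathbb{D}^0(f)\|_{L^2}+\|f\|_{H^{-1}})$, which is precisely where the lemma's difficulty lies. For the ordinary (non-trace-free) Korn inequality the Ne\v{c}as route closes because every $\partial_j\partial_k f_i$ is an explicit linear combination of first derivatives of $\mathbb{D}(f)$, so $\|\nabla^2 f\|_{H^{-1}}\lesssim\|\mathbb{D}(f)\|_{L^2}$ and one then applies Ne\v{c}as--Lions to each $\partial_k f_i$. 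Substituting $\mathbb{D}=\mathbb{D}^0+\tfrac{2}{n}(\operatorname{div}f)\,I$ leaves a term $\partial_\ell\operatorname{div}f$ in that combination, and its $H^{-1}$ norm is $\|\operatorname{div}f\|_{L^2}\lesssim\|\nabla f\|_{L^2}$ --- exactly the quantity you are trying to bound, so the estimate becomes circular. Your scalar identity $\Delta\operatorname{div}f=\tfrac{n}{2(n-1)}\partial_i\partial_j(\mathbb{D}^0(f))_{ij}$ only places $\Delta\operatorname{div}f$ in $H^{-2}$ with the right control; Ne\v{c}as--Lions then upgrades $\operatorname{div}f$ from $H^{-2}$ to $H^{-1}$, one derivative short of the $L^2$ bound you need, and on a Lipschitz domain with no boundary data for $\operatorname{div}f$ there is no further algebraic upgrade.

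Two smaller points. First, if the negative-norm bound were established, the Rellich contradiction argument would be doing no work, since $\|f\|_{H^{-1}}\le\|f\|_{L^2}$ already yields the lemma directly; as written, the compactness step is logically superfluous. Second, your alternative route through the conformal-Killing kernel and a quotient compactness argument is indeed closer to Dain's actual proof, but as sketched it still presupposes a closed-range (semi-Fredholm) estimate in order to upgrade the weak convergence $\nabla f_k\rightharpoonup 0$ to the strong convergence $\|\nabla f_k\|_{L^2}\to 0$; that estimate is obtained from the injectivity of the principal symbol of $\mathbb{D}^0$ together with a boundary estimate, neither of which appears in your outline. Until one of these two missing steps is actually carried out, the proof is not closed.
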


%%%%%%%%%%%%%%%%%%%%%%%%%%%%%%%%%%%%%%%%%%%%%%%%%%%%%%%%%%%%%%
%%%%%%%%%%%%%%%%%%%%%%%%%%%%%%%%%%%%%%%%%%%%%%%%%%%%%%%%%%%%%
\renewcommand{\theequation}{\thesection.\arabic{equation}}
\setcounter{equation}{0}

%%%%%%%%%%%%%%%%%%%%%%%%%%%%%%%%%%%%%%%%%%%%%%%%%%%%%%%%%%%%%%
%%%%%%%%%%%%%%%%%%%%%%%%%%%%%%%%%%%%%%%%%%%%%%%%%%%%%%%%%%%%%%%%%%

\section{A priori estimates}

In this section, we give a priori estimates of the system \eqref{eq:CNS1}. The main result of the section is as follows:
\begin{proposition}\label{pro: main}
Assume $(\xi,\,v)$ is a smooth solution of system \eqref{eq:CNS1} on $[0,\bar{T}]$ with initial data $(\xi_0,\,v_0) \in \mathcal{F}_{\kappa} \times (X^{12}_{\frac{1}{2}}\cap\,H^1)$ and $0<2\sigma_0\leq J_0\leq 3\sigma_0$, and $\brho$ satisfies \eqref{assum: brho1}-\eqref{assum: brho2}.
Then, there exists a positive constant $T\leq \bar T$ which depends on the initial data such that
 \begin{align*}
 &\sup_{t\in[0,T]}\mathcal{E}(t)+\int_0^T\mathcal{D}(s)ds\leq 2\mathcal{E}(0).
\end{align*}

\end{proposition}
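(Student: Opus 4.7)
The overall plan is a bootstrap/continuation argument. I would fix a smooth solution on $[0, \bar T]$, assume on some subinterval $[0, T]$ the a priori bounds $\|\frak{D}(v)\|_{L^2(0,T)} \leq \frak{C}$ and $\sigma_0 \leq J \leq 4\sigma_0$ needed to invoke Lemmas \ref{lem: cA,JcA} and \ref{lem:high nav cA}, and then derive an estimate of the shape $\sup_{[0,T]}\mathcal{E}(t) + \int_0^T \mathcal{D}(s)\,ds \leq \mathcal{E}(0) + T^{\f12}\mathcal{P}(\mathcal{E}(0), \frak{C})$. Choosing $T$ small enough so that the error term is absorbed into $\mathcal{E}(0)$ yields $\sup \mathcal{E} + \int \mathcal{D} \leq 2\mathcal{E}(0)$, and reproducing $\frak{C}$ from $\mathcal{E}$ via Lemma \ref{lem: interpolation} closes the bootstrap. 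The bound $J \in [\sigma_0, 4\sigma_0]$ is propagated from the initial assumption using $\pa_t J = J\na_\cA\cdot v$ integrated in time together with the dissipation control of $\na v$.

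The tangential part of $\mathcal{E}$ is produced by applying $Z^m$ for $|m|\leq 12$ to the momentum equation $\brho\pa_t v + \na_{J\cA}(J^{-2}\brho^2) - \na_{J\cA}\cdot \mS_\cA v = 0$ and testing against $\d^{2|m|}Z^m v$. The time derivative yields $\f12\frac{d}{dt}\|\brho^{\f12} Z^m v\|_{L^2}^2$; the viscous term, after integration by parts in the $J\cA$-divergence (using the Piola identity \eqref{Piola-identity}), gives the dissipation $\d^{2|m|}\|\na Z^m v\|_{L^2}^2$ up to commutators with $J\cA$ and boundary terms that vanish by $\mS_\cA(v)\mathcal{N}=0$ on $\Gamma$ and $v|_{x_3=0}=0$ on $\Gamma_b$. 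For the pressure, Piola lets me rewrite $\na_{J\cA}(J^{-2}\brho^2) = \pa_j(\cA^j_i J^{-1}\brho^2)$, and integration by parts against $Z^m v$ moves the derivative off the degenerate factor $\brho^2$; the $\Gamma$-boundary term vanishes because $\brho|_\Gamma = 0$, while the $\Gamma_b$-term vanishes by $v|_{x_3=0}=0$. All remaining commutators $[Z^m, \na_{J\cA}]$ and pressure error terms are of the form (coefficients in $J\cA, \cA$)$\cdot \na v$, whose $X^{12}$- and $X^{11}$-norms are exactly what Lemmas \ref{lem: cA,JcA}--\ref{lem:high nav cA} control, in the form $C_0\|\na v\|_{X^{12}} + t^{\f12}\mathcal{P}(\frak{C})\frak{D}(v)$, and these are absorbed into $\int_0^T \mathcal{D}$ after time integration.

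For the dissipation piece $\|\brho^{\f12}\pa_t v\|_{L^2}^2$ I would test the momentum equation directly against $\pa_t v$: this produces $\|\brho^{\f12}\pa_t v\|_{L^2}^2$ on the left, while the viscous term integrates by parts to $\f12\frac{d}{dt}\int J\mS_\cA v : \na_\cA v\,dx$ plus lower-order terms coming from $\pa_t \cA$ and $\pa_t J$, which by \eqref{equ:d-A}--\eqref{equ:d-J} are quadratic in $\na v$ and absorbed by $\frak{D}(v)\mathcal{D}$. The pressure term is treated similarly, using $\pa_t(J^{-1}\brho^2)\sim \cA\na v \cdot J^{-1}\brho^2$. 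The low-order $L^2$- and $H^1$-bounds of $v$ come from multiplying by $v$ itself, the viscous dissipation delivering $\|\mS_\cA(v)\|_{L^2}^2$, which Korn's Lemma \ref{lem-korn-2} (after a small-time perturbation of $\cA$ away from the identity) promotes to $\|v\|_{H^1}^2$. The $\brho^{-\f12+\kappa}\tri\xi$ component entering the companion bound on $\xi$ in $\mathcal{F}_\kappa$ is recovered a posteriori by reading the momentum equation as a degenerate elliptic equation for $v$ and integrating in time via $\xi = \xi_0 + \int_0^t v\,ds$.

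The main obstacle I expect is handling the interplay between the degenerate coefficient $\brho^2$ in the pressure and the full twelve conormal derivatives $Z^m$, while remaining consistent with the weight $\brho^{\f12}$ in $X^{12}_{\f12}$: derivatives falling on $\brho^2$ must be absorbed by the structural bound $|\brho^{-1}\na_h^k\brho|\leq C_k$ from \eqref{assum: brho2}, and the mixed terms $Z^{m_1}(\cA J^{-1})\cdot Z^{m_2}(\brho^2)$ require the full strength of the product estimates of Lemma \ref{lem: product2} together with the $L^\infty_{x_3}(L^2_h)$ bounds of Lemma \ref{lem: cA,JcA} to avoid losing either a power of $\brho$ or a derivative. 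A related, subtler point is that $\mathcal{E}$ itself encodes only \emph{tangential} conormal regularity of $v$, so the closing of the bootstrap (in particular, closing $\frak{D}(v)$ via Lemma \ref{lem: interpolation}) hinges on squeezing normal regularity out of the elliptic structure of the momentum equation — this is where the specific range $\kappa\in(0,\f1{16})$ and the Hardy inequality of Lemma \ref{lem: hardy} become indispensable.
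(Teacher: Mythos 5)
Your proposal is correct and follows the paper's proof essentially step by step: bootstrap on $\|\frak{D}(v)\|_{L^2(0,T)}\leq\frak{C}$ and $J\in[\sigma_0,4\sigma_0]$; zeroth/first/high-order energy estimates obtained by testing against $v$, $\pa_t v$, and $\d^{2|m|}Z^m v$ respectively; Korn's lemma for coercivity; Piola identity and vanishing $\Gamma$, $\Gamma_b$ boundary terms. The one slight mislabeling is that the closure of $\frak{D}(v)$ against $\mathcal{D}$ is achieved not via Lemma \ref{lem: interpolation} but via a dedicated estimate (Lemma \ref{lem: relation}) that reads the momentum equation as a degenerate elliptic equation to control $\|\brho^{-\f12+\kappa}\tri v\|_{L^2}$ — which is exactly the mechanism you identify in your last paragraph, so the understanding is there.
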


\bigskip

Here, we use the bootstrap argument to prove this proposition.  Now, we define a $T$ such that there holds that
\begin{align}
&\|\frak{ D}(v)\|_{L^2(0, T)}\leq \frak{C},\quad \sigma_0\leq \sup_{t\in[0, T]} J\leq 4\sigma_0.\label{assum: energy3}
\end{align}

\medskip

Before, we give the proof of the proposition, we prove some useful lemmas.

\begin{lemma}\label{lem:assumption}
Under the assumption of Proposition \ref{pro: main}, we have
\beno
\|\na v\|_{L^1(0, t; L^\infty)} \leq t^\f12\mathcal{P}(\frak{C}),\quad \|(J, A)(t)\|_{L^\infty}   \leq C_0(1+t^{\f12} \mathcal{P}(\frak{C})) ,\quad \forall \,\, t\in [0, T).
\eeno
\end{lemma}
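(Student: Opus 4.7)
The plan is to reduce the first bound to the pointwise $L^\infty$ control of $\nabla v$ provided by the interpolation estimate \eqref{est: interpolation 2}, and then to use the explicit flow-map decomposition $D\eta(t,x)=\nabla\eta_0(x)+\int_0^t\nabla v(s,x)\,ds$ together with the bootstrap lower bound $J\geq\sigma_0$ for the second bound. No genuinely hard step appears.

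For the first inequality, I would take $\ell=0$ and $f=v$ in \eqref{est: interpolation 2}. The conormal weight $\delta^{|\ell|}$ is then trivial and the right-hand side coincides with $C_0\,\mathfrak{D}(v)$ by the definition \eqref{def: D(v)(t)}. Integrating in time and invoking Cauchy--Schwarz together with the bootstrap assumption \eqref{assum: energy3} yields
\[
\|\nabla v\|_{L^1(0,t;L^\infty)}\leq C_0\int_0^t\mathfrak{D}(v)(s)\,ds\leq C_0\,t^{1/2}\|\mathfrak{D}(v)\|_{L^2(0,t)}\leq C_0\,t^{1/2}\mathfrak{C},
\]
which is absorbed into the claimed form $t^{1/2}\mathcal{P}(\mathfrak{C})$.

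For the second inequality, I would differentiate $\eta=\eta_0+\int_0^t v\,ds$ to get $D\eta=\nabla\eta_0+\int_0^t\nabla v\,ds$. Applying \eqref{est: interpolation 2} with $\ell=0$ to $\xi_0=\eta_0-x$, whose $X^{12}$ norm of $\nabla\xi_0$ and whose weighted Laplacian norm are both controlled by $\|\xi_0\|_{\mathcal{F}_\kappa}$, gives $\|\nabla\eta_0\|_{L^\infty}\leq C_0$ in terms of the initial data only. Combining with the first step,
\[
\|D\eta(t)\|_{L^\infty}\leq C_0+t^{1/2}\mathcal{P}(\mathfrak{C}).
\]
Since $J=\det D\eta$ is a cubic polynomial in the entries of $D\eta$, and $\mathcal{A}=J^{-1}\,\mathrm{adj}(D\eta)$ is a quadratic polynomial in those entries divided by $J$, and since \eqref{assum: energy3} ensures $J\geq\sigma_0$, we conclude
\[
\|(J,\mathcal{A})(t)\|_{L^\infty}\leq C_0\bigl(1+t^{1/2}\mathcal{P}(\mathfrak{C})\bigr)^3\leq C_0\bigl(1+t^{1/2}\mathcal{P}(\mathfrak{C})\bigr)
\]
after enlarging the polynomial $\mathcal{P}$.

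The only mildly delicate point is verifying that $\|\nabla\eta_0\|_{L^\infty}$ is finite under the regularity $\xi_0\in\mathcal{F}_\kappa$, which is immediate from Lemma \ref{lem: interpolation} applied at $t=0$; everything else amounts to polynomial algebra and Cauchy--Schwarz in time, so no real obstacle is expected.
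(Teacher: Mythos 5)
Your proof is correct. The paper's own proof is a one-liner citing Lemma~\ref{lem: interpolation} and Lemma~\ref{lem: cA,JcA}; your treatment of the first bound (take $\ell=0$ in \eqref{est: interpolation 2} and apply Cauchy--Schwarz in time against the bootstrap assumption \eqref{assum: energy3}) is identical to what the paper intends. For the second bound the paper would extract $L^\infty$ from the $L^\infty_{x_3}(L^2_h)$ conormal estimates of Lemma~\ref{lem: cA,JcA} via the horizontal Sobolev embedding $H^2_h\hookrightarrow L^\infty_h$, whereas you bypass that lemma and argue directly from $D\eta=\nabla\eta_0+\int_0^t\nabla v\,ds$, using \eqref{est: interpolation 2} at $\ell=0$ once on $\xi_0$ and once on $v$, then the cofactor formula $\cA=J^{-1}\mathrm{adj}(D\eta)$ together with the bootstrap lower bound on $J$. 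This is exactly the mechanism inside the proof of Lemma~\ref{lem: cA,JcA}, just carried out at the pointwise level, so it is a mild simplification rather than a different argument; it has the small advantage of not introducing the $\delta$-dependent factors that appear when one trades conormal weights for the $H^2_h$ embedding. No gap.
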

\begin{proof}
It is a direct result from Lemma \ref{lem: interpolation} and Lemma \ref{lem: cA,JcA}.
\end{proof}

%In order to get more clear dissipation estimates, we need the following Lemmas.
%
%\begin{lemma}(Korn's inequality)\label{lem: Korn's}
%It holds that
%\begin{align*}
%\|f\|_{H^1}\leq C_0(\|\mathbb{D}f\|_{L^2}+\|f\|_{L^2})
%\end{align*}
%for any $f\in H^1.$ Here $\mathbb{D}f$ is the symmetric part of $\na f.$
%\end{lemma}

\medskip

\begin{lemma}\label{Lem: H^1}
Under the assumption of Proposition \ref{pro: main}, there exists a constant $\delta_0$ which depends on the initial data such that for $\delta\leq \delta_0$, the following holds
\begin{equation}\label{korn-ine-Xn-1}
\|\na v\|_{X^{N}}\leq C_0(\|\mathbb{D}^0(v)\|_{X^{N}}+\|v\|_{X^N_{\f12}}).
\end{equation}

\end{lemma}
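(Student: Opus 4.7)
The plan is to reduce the weighted conormal Korn estimate to Dain's unweighted Korn inequality (Lemma~\ref{lem-korn-2}) applied to each conormal derivative $Z^m v$ for $|m|\leq N$, and to absorb the resulting commutators and lower-order terms using the smallness parameter~$\delta$.

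First I would apply Lemma~\ref{lem-korn-2} to $Z^m v$, multiply by $\delta^{2|m|}$, and sum over $|m|\leq N$. Writing $\nabla(Z^m v)=Z^m\nabla v+[\nabla,Z^m]v$ and $\mathbb{D}^0(Z^m v)=Z^m\mathbb{D}^0(v)+[\mathbb{D}^0,Z^m]v$ produces
\[
\|\nabla v\|_{X^N}^2\leq C_0\|\mathbb{D}^0(v)\|_{X^N}^2+C_0\sum_{|m|\leq N}\delta^{2|m|}\|Z^m v\|_{L^2}^2+R,
\]
where $R$ collects the commutator terms. The only nontrivial commutator is $[\partial_i,Z_3]=(\partial_i\bar\rho)\partial_3$; by \eqref{assum: brho2}, $\bar\rho^{-1}\nabla_h\bar\rho\in L^\infty$ converts horizontal factors into bounded coefficients times $Z_3$, while $\partial_3\bar\rho\in L^\infty$ keeps vertical contributions bounded without a $\bar\rho^{-1}$. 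Iterating, both $[\nabla,Z^m]v$ and $[\mathbb{D}^0,Z^m]v$ decompose into sums of $\theta(x)Z^{m'}\nabla v$ with $|m'|\leq|m|-1$ and $\theta\in L^\infty$, so $R\leq C\delta^2\|\nabla v\|_{X^N}^2$ and is absorbed into the left-hand side once $\delta\leq\delta_0$ is chosen small.

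It remains to control $\sum\delta^{2|m|}\|Z^m v\|_{L^2}^2$ by $\|v\|_{X^N_{\f12}}^2$ plus absorbable pieces. Applying Lemma~\ref{lem: hardy} to $f=Z^m v$ with $\varepsilon=0$ yields
\[
\|Z^m v\|_{L^2}\leq C_0\bigl(\|\bar\rho^{\f12}Z^m v\|_{L^2}+\|\bar\rho^{\f12}\nabla(Z^m v)\|_{L^2}\bigr);
\]
the first summand feeds directly into $\|v\|_{X^N_{\f12}}$, and the second, after commuting $\nabla$ past $Z^m$, becomes $\|\bar\rho^{\f12}Z^m\nabla v\|_{L^2}$ up to a commutator already controlled. \textbf{The main obstacle lies here}: $\|\bar\rho^{\f12}Z^m\nabla v\|_{L^2}\leq\|Z^m\nabla v\|_{L^2}$ only with the fixed constant $1$, without any free small prefactor, so one cannot na\"\i vely absorb into the left-hand side. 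The remedy is that $\bar\rho$ vanishes only on the free surface $\{x_3=1\}$: on a compact subset $\{x_3\leq 1-\eta\}$ the weight $\bar\rho^{\f12}$ is bounded below, so $\|Z^m v\|_{L^2}$ there is comparable to $\|\bar\rho^{\f12}Z^m v\|_{L^2}\leq\|v\|_{X^N_{\f12}}$ directly; on the thin neighbourhood $\{x_3>1-\eta\}$ one has $\bar\rho^{\f12}\leq\sqrt{C\eta}$, so the weighted gradient acquires a prefactor $\sqrt{C\eta}$ that is swallowed by taking $\eta$ (and hence $\delta_0$) small. Combining the two regimes with the commutator absorption from the previous step closes the estimate.
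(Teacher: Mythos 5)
Your overall strategy is the same as the paper's: apply Korn's Lemma~\ref{lem-korn-2} to each $Z^m v$, observe that the commutators $[\mathbb{D}^0,Z^m]v$ and $[\nabla,Z^m]v$ have the form $\theta\,Z^{m'}\nabla v$ with $|m'|\le|m|-1$ and $\theta\in L^\infty$ (hence gain a power of $\delta$), and then control the residual $\|Z^m v\|_{L^2}$ by $\|\brho^{1/2}Z^m v\|_{L^2}$ plus an absorbable small multiple of the gradient. The commutator bookkeeping is correct and matches the paper.

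The gap is in the last step. A minor point first: Lemma~\ref{lem: hardy} requires $\varepsilon>0$; the exponent $\varepsilon=0$ is the excluded borderline case (take instead $\varepsilon=\frac12$ and dominate the resulting $\brho^{3/2}$ weights on the right by $\brho^{1/2}$). The substantive issue is the thin-layer estimate. If you use the \emph{global} Hardy inequality, the gradient term $\|\brho^{1/2}\nabla Z^m v\|_{L^2(\Omega)}$ is over the whole domain; its contribution from $\{x_3\le 1-\eta\}$, where $\brho^{1/2}$ is $O(1)$, is of size $\|\nabla Z^m v\|_{L^2}$ and is \emph{not} small, and the compact-set part of your remedy addresses only the zeroth-order norm $\|Z^m v\|_{L^2}$, not that piece of the gradient. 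If instead you \emph{localize} Hardy to the slab $\{x_3>1-\eta\}$, the constant degenerates: testing $f\equiv1$ gives $\sqrt{\eta}\le C\,\eta/\sqrt{2}$, i.e.\ $C\gtrsim\eta^{-1/2}$, which exactly cancels the $\sqrt{\eta}$ gain from $\brho^{1/2}\le\sqrt{C\eta}$. What makes this step close in the paper is the \emph{rescaled} one-dimensional Hardy inequality on the boundary strip,
\[
\int_{1-\varepsilon}^1 f^2\,ds \;\le\; \frac{C_0}{\varepsilon^2}\int_{1-\varepsilon}^1(1-s)^2 f^2\,ds \;+\; C_0\int_{1-\varepsilon}^1(1-s)^2 (f')^2\,ds,
\]
where the chain-rule factor $\varepsilon$ from $s=1-\varepsilon t$ makes the two right-hand terms carry \emph{asymmetric} constants: the blow-up $\varepsilon^{-2}$ hits only the zeroth-order term (and is tamed to $\varepsilon^{-1}$ by one power of $(1-s)\sim\brho$, then absorbed into the final $C_0$), while the gradient term carries no inverse power of $\varepsilon$ and the weight $(1-s)^2\le\varepsilon^2$ supplies a genuinely small prefactor. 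This gives $\|f\|_{L^2}^2\le\frac{C_0}{\varepsilon}\|\brho^{1/2}f\|_{L^2}^2+C_0\varepsilon^2\|f\|_{H^1}^2$, which is what allows the absorption into $\|Z^m v\|_{H^1}$. Your sketch has the right geometric intuition but omits this asymmetry, and without it the estimate does not close. Finally, the Hardy smallness parameter $\varepsilon$ and the conormal parameter $\delta_0$ serve independent absorptions and should not be linked as in your last sentence.
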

\begin{proof}
Thanks to Korn's lemma (Lemma \ref{lem-korn-2}), we have
\begin{equation*}
\begin{split}
&\|v\|_{H^1}\leq C_0(\|\mathbb{D}^0(v)\|_{L^2}+\|v\|_{L^2}).
\end{split}
\end{equation*}
For any function $f(s)$, by Lemma \ref{lem: hardy}, we have
\begin{align*}
\int_0^1f^2ds\leq& C_0\int_0^1s^2(f^2+f'^2)ds\\
%\triangleq& C\int_0^\e s^2(f^2+f'^2)ds+C\int_\e ^{1-\e}s^2(f^2+f'^2)ds+C\int_\e^1s^2(f^2+f'^2)ds
\end{align*}
By scaling, we have
\begin{align*}
\int_{1-\e}^1 f^2ds\leq& \f{C_0}{\e^2}\int_{1-\e}^1 (1-s)^2f^2ds+C_0\int_{1-\e}^1 ({1-s})^2f'^2ds.
\end{align*}
Then \eqref{assum: brho3} gives that
\begin{align}\label{est: v L^2}
\|f\|_{L^2}^2\leq C_0\|{\brho}^{-1}\|_{L^\infty(0\leq x_3\leq 1-\e)}\int _{\Om}\brho f^2dx+C_0\e^2\|f\|_{H^1}^2\leq \f{C_0}{\e}\int _{\Om}\brho f^2dx+C_0\e^2\|f\|_{H^1}^2.
\end{align}
Taking $\e$ small enough and $f:=v$, we  combine with Lemma \ref{lem-korn-2} to get that
\begin{align*}
\|v\|_{H^1}\leq C_0(\|\mathbb{D}^0(v)\|_{L^2}+\|\brho^\f12v\|_{L^2}).
\end{align*}

For given $m\in\mathbb{N}^3$: $1\leq|m| \leq N$,
\begin{equation*}
\begin{split}
\d^{|m|}\|Z^mv\|_{H^1}\leq& C_0\d^{|m|}(\|\mathbb{D}^0(Z^mv)\|_{L^2}+\|\brho^\f12Z^mv\|_{L^2})\\
\leq& C_0\d^{|m|}(\|Z^m\mathbb{D}^0v\|_{L^2} +\|[\mathbb{D}^0,Z^m]v\|_{L^2}+\|\brho^\f12 Z^mv\|_{L^2}),
\end{split}
\end{equation*}
which follows from the fact $[\mathbb{D}^0,Z^m]v\sim Z^{m-1}\, \nabla\,v$ that
\begin{equation*}
\begin{split}
&\d^{|m|}\|Z^mv\|_{H^1}\leq C\d^{|m|}(\|Z^m\mathbb{D}^0v\|_{L^2} +\|Z^{m-1}\nabla\,v\|_{L^2}+\|\brho^\f12 Z^mv\|_{L^2}).
\end{split}
\end{equation*}
Therefore, summing $|m|$ from 0 to $N$ and the definition of space $X^{N}$, we take $\d$ so small to arrive at \eqref{korn-ine-Xn-1}.
\end{proof}
\medskip

\begin{lemma}\label{lem-equiv-coor-1}
Let the initial flow map $\eta_0=Id+\xi_0: \Omega\rightarrow \Omega(0)$ satisfy its Jacobian $2\sigma_0\leq\,J_0\leq 3\sigma_0$ and $\xi_0 \in \mathcal{F}_{\kappa}$, and its inverse map $\eta_0^{-1}: \Omega(0)\rightarrow \Omega$, $v(x)=\widetilde{u}(\eta_0(x))$ with $x \in \Omega$ and  $\widetilde{u}(y)=v(\eta_0^{-1}(y))$ with $y \in \Omega(0)$, then there is a positive constant $C_1\geq 1$ such that
\begin{equation}\label{est-equiv-H1-1}
  \begin{split}
  &C_1^{-1} (1+\|\xi_0\|_{\mathcal{F}_{\kappa}}^2)^{-1}\int_{\Omega}|\nabla\,v|^2\,dx \leq\int_{\Omega(0)}|\nabla_y\,\widetilde{u}(y)|^2\,dy\leq\,C_1 (1+\|\xi_0\|_{\mathcal{F}_{\kappa}}^2)\int_{\Omega}|\nabla\,v|^2\,dx.
  \end{split}
\end{equation}
\end{lemma}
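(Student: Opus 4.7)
The plan is to reduce the equivalence to two bounds: a uniform pointwise bound on $\nabla\eta_0$ and a uniform pointwise bound on its inverse $\mathcal{A}_0 = (\nabla\eta_0)^{-1}$, after which the change of variables $y=\eta_0(x)$ together with the chain rule closes both inequalities.

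First, I would invoke the chain rule: writing $v(x)=\widetilde u(\eta_0(x))$ gives $\partial_i v(x) = \partial_j\widetilde u(\eta_0(x))\,\partial_i\eta_0^j(x)$, and conversely $\nabla_y\widetilde u(y) = \mathcal{A}_0(\eta_0^{-1}(y))\,\nabla v(\eta_0^{-1}(y))$. The change of variables $y=\eta_0(x)$, valid because $J_0\ge 2\sigma_0$ makes $\eta_0$ a diffeomorphism from $\Omega$ to $\Omega(0)$ (with the boundary matching assumed earlier), produces
\begin{equation*}
\int_\Omega |\nabla v|^2\,dx \;\le\; \|\nabla\eta_0\|_{L^\infty}^2 \int_\Omega |(\nabla_y\widetilde u)(\eta_0)|^2\,dx \;=\; \|\nabla\eta_0\|_{L^\infty}^2 \int_{\Omega(0)} |\nabla_y\widetilde u|^2\,J_0^{-1}\,dy,
\end{equation*}
and symmetrically
\begin{equation*}
\int_{\Omega(0)} |\nabla_y\widetilde u|^2\,dy \;\le\; \|\mathcal{A}_0\|_{L^\infty}^2 \int_\Omega |\nabla v|^2\,J_0\,dx.
\end{equation*}
Since $J_0\in[2\sigma_0,3\sigma_0]$, all the $J_0$ factors are controlled by $\sigma_0$.

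The only remaining task is to bound $\|\nabla\eta_0\|_{L^\infty}$ and $\|\mathcal{A}_0\|_{L^\infty}$ by $\|\xi_0\|_{\mathcal{F}_\kappa}$. Writing $\nabla\eta_0 = I+\nabla\xi_0$, I would apply Lemma \ref{lem: interpolation} with $\ell=0$ to the function $f=\xi_0$: since $\xi_0\in\mathcal{F}_\kappa$,
\begin{equation*}
\|\nabla\xi_0\|_{L^\infty} \;\le\; C_0\bigl(\|\nabla\xi_0\|_{X^{12}}+\|\overline\rho^{-\frac12+\kappa}\Delta\xi_0\|_{L^2}\bigr)\;\le\; C_0\|\xi_0\|_{\mathcal{F}_\kappa},
\end{equation*}
hence $\|\nabla\eta_0\|_{L^\infty}^2\le C_0(1+\|\xi_0\|_{\mathcal{F}_\kappa}^2)$. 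For the inverse I use the cofactor identity $\mathcal{A}_0 = J_0^{-1}\,\mathrm{cof}(\nabla\eta_0)$; in three dimensions each cofactor entry is a quadratic polynomial in the entries of $\nabla\eta_0$, so $\|\mathcal{A}_0\|_{L^\infty}\le C\,\sigma_0^{-1}\|\nabla\eta_0\|_{L^\infty}^2$, again polynomially controlled by $\|\xi_0\|_{\mathcal{F}_\kappa}$. Absorbing all polynomial factors into the single constant $C_1$ yields \eqref{est-equiv-H1-1}.

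The only real obstacle is the pointwise control of $\nabla\xi_0$, and this is exactly what the space $\mathcal{F}_\kappa$ was designed to supply via Lemma \ref{lem: interpolation}: the weighted Hardy bound on $\overline\rho^{-1/2+\kappa}\Delta\xi_0$ is what lets the $X^{12}$-tangential norm be upgraded to an $L^\infty$ bound on $\nabla\xi_0$ without any genuine $H^2$ control. Once that pointwise bound is in place, the rest is bookkeeping with the change of variables and the uniform bounds $2\sigma_0\le J_0\le 3\sigma_0$.
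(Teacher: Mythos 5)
Your proof follows essentially the same route as the paper: change of variables $y=\eta_0(x)$, chain rule relating $\nabla v$ and $\nabla_y\widetilde u$ through $\nabla\eta_0$ and $\mathcal{A}_0=(\nabla\eta_0)^{-1}$, absorption of $J_0\in[2\sigma_0,3\sigma_0]$, and a pointwise bound on $\nabla\xi_0$ supplied by \eqref{est: interpolation 2} applied with $\ell=0$. The one place where you fill in detail that the paper leaves implicit is the $L^\infty$ bound on $\mathcal{A}_0$: you derive it from the cofactor identity $\mathcal{A}_0 = J_0^{-1}\mathrm{cof}(\nabla\eta_0)$ together with $J_0\ge 2\sigma_0$, which is the right way to avoid any smallness assumption on $\nabla\xi_0$.

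One small caveat: your cofactor argument yields $\|\mathcal{A}_0\|_{L^\infty}\lesssim\sigma_0^{-1}(1+\|\xi_0\|_{\mathcal{F}_\kappa})^2$, so squaring gives a constant polynomial of degree four in $\|\xi_0\|_{\mathcal{F}_\kappa}$, whereas the lemma as stated has the factor $(1+\|\xi_0\|_{\mathcal{F}_\kappa}^2)$, i.e.\ degree two. This is an imprecision in the paper's statement rather than a gap in your reasoning---the paper's own proof, made fully explicit, suffers the same discrepancy, and what actually matters downstream is only that the constant is controlled by some fixed polynomial in $\|\xi_0\|_{\mathcal{F}_\kappa}$ (since $\xi_0$ is fixed initial data). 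It would be cleaner to state the conclusion with $C_1=C_1(\sigma_0,\|\xi_0\|_{\mathcal{F}_\kappa})$, exactly as you in effect do by ``absorbing all polynomial factors into the single constant $C_1$.''
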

\begin{proof}
First, taking changes of variables $y=\eta_0(x)$, we have
\begin{equation*}
  \begin{split}
  &\int_{\Omega(0)}|\nabla_y\,\widetilde{u}(y)|^2\,dy=\int_{\Omega}|\nabla_y\,v(x)|^2\,d(\eta_0(x))
  =\int_{\Omega(0)}|(D_y(\eta_0^{-1}))(\eta_0(x))\nabla_x\,v(x)|^2\,J_0dx,
  \end{split}
\end{equation*}
which along with the assumptions $2\sigma_0\leq\,J_0\leq 3\sigma_0$, $\xi_0 \in \mathcal{F}_{\kappa}$, and \eqref{est: interpolation 2} implies
\begin{equation*}\label{est-equiv-H1-1a}
  \begin{split}
  &\int_{\Omega(0)}|\nabla_y\,\widetilde{u}(y)|^2\,dy \leq C \|(D_y(\eta_0^{-1}))(\eta_0(x))\|_{L^\infty}^2\int_{\Omega}|\nabla_x\,v(x)|^2\,dx\\
  &\leq C_1(1+\|\xi_0\|_{\mathcal{F}_{\kappa}}^2)\int_{\Omega}|\nabla\,v|^2\,dx.
  \end{split}
\end{equation*}
Similarly, one may readily check
\begin{equation*}\label{est-equiv-H1-1b}
  \begin{split}
  &\int_{\Omega}|\nabla_x\,v(x)|^2\,dx
  =\int_{\Omega(0)}|(D_x\eta_0)(\eta_0^{-1}(y))\nabla_y\,\widetilde{u}(y)|^2\,J_0^{-1}dy\\
  &\leq\,C_1(1+\|\xi_0\|_{\mathcal{F}_{\kappa}}^2)
  \int_{\Omega(0)}|\nabla_y\,\widetilde{u}(y)|^2\,dy.
  \end{split}
\end{equation*}
Therefore, we get \eqref{est-equiv-H1-1}, and complete the proof of Lemma \ref{lem-equiv-coor-1}.
\end{proof}

\begin{lemma}\label{lem: equal na v}
Under the assumption of Proposition \ref{pro: main}, if \eqref{assum: energy3} holds, then we have
\begin{align*}
(c_0- t^{\frac{1}{2}} \mathcal{P}(\frak{C}))\|\na v\|_{L^2}^2-C_0\|\brho^\f12 v\|_{L^2}^2\leq\|\mathbb{D}_{\cA}^0 v\|_{L^2}^2\leq C_0(1+t^{\frac{1}{2}} \mathcal{P}(\frak{C}))\|\na v\|_{L^2}^2.
\end{align*}

Moreover, if $T$ small enough such that $T^{\frac{1}{2}}\mathcal{P}(\frak{C})<\f{c_0}2$, then we have
\begin{equation*}
\begin{split}
\int_\Om J\mathbb{S}_\cA v: \na_{\cA}v \,dx&\geq c_1\|\mathbb{D}_\cA^0 v\|_{L^2}^2 \geq \f{c_0c_1}{2}\|\na v\|_{L^2}^2-C_0\|\brho^\f12 v\|_{L^2}^2.
\end{split}
\end{equation*}
%\begin{align*}
%\int_\Om J\mathbb{S}_\cA v: \na_{\cA}v dx\geq c_0\|\na v\|_{L^2}^2 -C_0\|\brho^\f12 v\|_{L^2}^2 .
%\end{align*}

\end{lemma}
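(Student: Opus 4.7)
The plan is to treat $\mathbb{D}_{\cA}^0 v$ as a small perturbation of $\mathbb{D}_{\cA(0)}^0 v$, transfer the principal piece to the fixed Eulerian domain $\Omega(0)$ where Korn's inequality (Lemma \ref{lem-korn-2}) is available, and absorb the leftover $\|v\|_{L^2}^2$ into $\|\brho^{\f12}v\|_{L^2}^2$ via the weighted one-dimensional bound \eqref{est: v L^2} already obtained in the proof of Lemma \ref{Lem: H^1}.

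For the \emph{upper bound}, I observe that $\mathbb{D}_{\cA}^0 v$ is bilinear in $\cA$ and $\nabla v$, bound it pointwise by $C|\cA|^2|\nabla v|^2$, and invoke Lemma \ref{lem:assumption}, which yields $\|\cA(t)\|_{L^\infty}\leq C_0(1+t^{\f12}\mathcal{P}(\frak{C}))$. Integration gives the right-hand inequality at once.

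For the \emph{lower bound}, I write $\mathbb{D}_{\cA}^0 v = \mathbb{D}_{\cA(0)}^0 v + R$, where $R$ is linear in $\cA(t)-\cA(0)$ and $\nabla v$. Using \eqref{equ:d-A} and Lemma \ref{lem:assumption}, time integration gives $\|\cA(t)-\cA(0)\|_{L^\infty}\leq t^{\f12}\mathcal{P}(\frak{C})$, hence $\|R\|_{L^2}\leq t^{\f12}\mathcal{P}(\frak{C})\|\nabla v\|_{L^2}$, so that $\|\mathbb{D}_{\cA}^0 v\|_{L^2}^2\geq \tfrac12\|\mathbb{D}_{\cA(0)}^0 v\|_{L^2}^2 - t\,\mathcal{P}^2(\frak{C})\|\nabla v\|_{L^2}^2$. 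For the principal piece I set $\tilde u(y):=v(\eta_0^{-1}(y))$ for $y\in\Omega(0)$; by the chain rule $(\mathbb{D}^0\tilde u)(\eta_0(x))=\mathbb{D}_{\cA(0)}^0 v(x)$, and a change of variables (with $2\sigma_0\leq J_0\leq 3\sigma_0$) makes $\|\mathbb{D}_{\cA(0)}^0 v\|_{L^2(\Omega)}$ and $\|\mathbb{D}^0\tilde u\|_{L^2(\Omega(0))}$ comparable. Applying Lemma \ref{lem-korn-2} on the fixed Lipschitz domain $\Omega(0)$ and transferring the $H^1$ norm back to $\Omega$ through Lemma \ref{lem-equiv-coor-1} produces
\[
\|\mathbb{D}_{\cA(0)}^0 v\|_{L^2}^2\geq c_0\|\nabla v\|_{L^2}^2-C_0\|v\|_{L^2}^2.
\]
Inserting \eqref{est: v L^2} with $\varepsilon$ small enough that $C_0\varepsilon^2$ is absorbed by $c_0$ (invoking also $\|v\|_{H^1}^2\lesssim\|\nabla v\|_{L^2}^2+\|v\|_{L^2}^2$) then gives the first line of the lemma with the advertised $t^{\f12}\mathcal{P}(\frak{C})$ loss.

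For the \emph{second chain}, I use that $\mathbb{D}_{\cA}^0 v$ is symmetric and trace-free to obtain the pointwise identity
\[
\mathbb{S}_{\cA}v:\nabla_{\cA}v = \tfrac{\mu}{2}|\mathbb{D}_{\cA}^0 v|^2+\bigl(\lambda+\tfrac{2}{3}\mu\bigr)|\nabla_{\cA}\cdot v|^2,
\]
which is at least $\tfrac{\mu}{2}|\mathbb{D}_{\cA}^0 v|^2$ by \eqref{assum: mu,la}. Multiplying by $J\geq\sigma_0$ and integrating yields $\int_\Omega J\,\mathbb{S}_{\cA}v:\nabla_{\cA}v\,dx\geq c_1\|\mathbb{D}_{\cA}^0 v\|_{L^2}^2$, and combining with the first line under $T^{\f12}\mathcal{P}(\frak{C})<c_0/2$ closes the announced two-step lower bound. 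The \emph{main obstacle} is the Korn step: one must keep $c_0$ genuinely uniform in $t$ (independent of $\delta$ and $\frak{C}$) so that both the $t^{\f12}\mathcal{P}(\frak{C})$ perturbation generated by $\cA(t)-\cA(0)$ and the $\varepsilon^2\|v\|_{H^1}^2$ correction from \eqref{est: v L^2} remain strictly below the $c_0\|\nabla v\|_{L^2}^2$ margin, which is precisely what forces the smallness of $T$ in the statement.
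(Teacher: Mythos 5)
Your proposal follows essentially the same route as the paper's proof: perturb around the initial coefficients, transfer to the fixed Lipschitz domain $\Omega(0)$ by the chain rule and change of variables, apply Lemma \ref{lem-korn-2} there, bring the $H^1$ norm back via Lemma \ref{lem-equiv-coor-1}, absorb the leftover $\|v\|_{L^2}^2$ into $\|\brho^{1/2}v\|_{L^2}^2$ using \eqref{est: v L^2}, and finish the coercivity chain with the pointwise identity $\mathbb{S}_{\cA}v:\nabla_{\cA}v = \tfrac{\mu}{2}|\mathbb{D}_{\cA}^0 v|^2+(\lambda+\tfrac{2}{3}\mu)|\nabla_{\cA}\cdot v|^2$ and $J\geq\sigma_0$. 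The only cosmetic difference is that you compare $\cA(t)$ with $\cA_0$ via integration of \eqref{equ:d-A}, whereas the paper compares $J\cA$ with $J_0\cA_0$ through the cofactor structure; both yield the same $t^{1/2}\mathcal{P}(\frak{C})$ perturbation bound.
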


\begin{proof}
%All we need is to prove the first result. The second one can be directly got.
We first to prove the first result. According to the fact
\begin{align}\label{eq: JcA-I}
J\cA-J_0\cA_0\sim (\int_0^t \na vds)^2,
\end{align}
and $\cA_0^{-1}=D\eta_0,$
combining Lemmas \ref{lem: interpolation}, \ref{lem:assumption} with \eqref{assum: energy3} , we have
\begin{align}\label{est: JcA-I}
\|J\cA-J_0\cA_0\|_{L^\infty}\leq& C_0\|\na v\|_{L_t^1L^\infty}^2\leq C_0 t^{\frac{1}{2}} \mathcal{P}(\frak{C}),\quad \|(\cA_0^{-1},\cA_0)\|_{L^\infty}\leq C_0(1+t^{\f12}\mathcal{P}(\frak{C})),
\end{align}
which imply that
\begin{align}\label{est-J-J0-1}
&\|\mathbb{D}^0_{J\cA-J_0\cA_0}(v)\|_{L^2}^2\leq C_0\|J\cA-J_0\cA_0\|_{L^\infty}^2\|\na v\|_{L^2}^2\leq t^{\frac{1}{2}} \mathcal{P}(\frak{C})\|\na v\|_{L^2}^2,\\
&\|\mathbb{D}_{J_0\cA_0}^0(v)\|_{L^2}^2\leq C_0\|\na  v\|_{L^2}^2.
\end{align}
On the other hand, we use \eqref{assum: energy3}, the coordinate transformation from $\Om$ to $\Om(0)$ and Lemmas \ref{lem-korn-2}, \ref{lem-equiv-coor-1} to get that
\begin{equation*}
  \begin{split}
  &\int_{\Omega}|\mathbb{D}^0_{\cA_0}(v)|^2\,J_0\,dx=\int_{\Omega(0)}|\mathbb{D}^0(\widetilde{u})|^2\,dx\geq c_1\|\nabla\,\widetilde{u}\|_{L^2(\Om(0))}^2-C_1\|\widetilde{u}\|_{L^2(\Om(0))}^2\\
  &\geq c_0\|v\|_{H^1}^2-C_0\|v\|_{L^2}^2,
  \end{split}
\end{equation*}
where  $\widetilde{u}=v\circ\eta_0^{-1}$. Hence, according to \eqref{assum: energy3} and \eqref{est: v L^2}, we obtain that
\begin{align*}
\|\mathbb{D}^0_{J_0\cA_0} (v)\|_{L^2}^2\geq c_0\|v\|_{H^1}^2-C_0\|\brho^\f12 v\|_{L^2}^2,
\end{align*}
which combining with \eqref{est-J-J0-1} gives rise to
\begin{align*}
(c_0- t^{\frac{1}{2}} \mathcal{P}(\frak{C}))\|\na v\|_{L^2}^2-C_0\|\brho^\f12 v\|_{L^2}^2\leq\|\mathbb{D}_{\cA}^0 v\|_{L^2}^2\leq (C_0+t^{\frac{1}{2}} \mathcal{P}(\frak{C}))\|\na v\|_{L^2}^2,
\end{align*}
which we complete the first result. For the second one, we deduce
\begin{equation*}
\begin{split}
\int_\Om J\mathbb{S}_\cA v: \na_{\cA}v dx&=\f12\int_{\Om}(\f\mu2|\mathbb{D}^0_{\cA} v|^2+(\lambda+\frac{2}{3}\mu) |\na_\cA\cdot v|^2)\,Jdx\\
  &\geq c_1\|\mathbb{D}^0_{\cA} v\|_{L^2}^2\geq (c_0c_1- t^{\frac{1}{2}} \mathcal{P}(\frak{C}))\|\na v\|_{L^2}^2-C_0\|\brho^\f12 v\|_{L^2}^2,
\end{split}
\end{equation*}
here we use \eqref{assum: energy3} in the  last step and assumption  $\mu>0,~\lambda+\frac{2}{3}\mu\geq 0$. Combining with the first result, we finish this proof.
\end{proof}

\subsection{Zeroth-order estimate of $ v$}

Now, we are in a position to give a priori estimates. First, multiplying by $v$ on the first equation of \eqref{eq:CNS1} and integrating over $\Om$, from the Piola identity \eqref{Piola-identity} and boundary conditions, we get the basic energy estimate:
\begin{proposition}\label{basic energy: v}
Assume $v$ is a smooth solution of system \eqref{eq:CNS1} on $[0,T]$. Then, we have
\begin{align*}
\f12\f{d}{dt}(\int_{\Om}\brho|v|^2dx+2\int_{\Om}\brho^2J^{-1}dx)
+\f12\int_{\Om}(\f\mu2|\mathbb{D}^0_{\cA} v|^2+(\lambda+\frac{2}{3}\mu) |\na_\cA\cdot v|^2)\,Jdx=0.
\end{align*}

\end{proposition}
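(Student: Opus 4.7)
My plan is to test the momentum equation (the second equation of \eqref{eq:CNS1}) against $v$ in $L^2(\Omega)$ and reorganize the three resulting integrals so that two become exact time derivatives and the third yields the positive dissipation.

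\textbf{Step 1 (time derivative of kinetic energy).} Since $\overline{\rho}$ is time-independent,
\begin{equation*}
\int_\Omega \overline{\rho}\,\partial_t v\cdot v\,dx=\tfrac{1}{2}\tfrac{d}{dt}\int_\Omega\overline{\rho}\,|v|^2\,dx.
\end{equation*}

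\textbf{Step 2 (pressure term as $\tfrac{d}{dt}\!\int\overline{\rho}^2 J^{-1}$).} Writing the pressure contribution as $\int_\Omega\big(\nabla_{J\mathcal{A}}(J^{-2}\overline{\rho}^2)\big)\cdot v\,dx$, I use the Piola identity \eqref{Piola-identity} to move the derivative outside: $(J\mathcal{A}^j_i)\,\partial_j(J^{-2}\overline{\rho}^2)=\partial_j\!\big(J\mathcal{A}^j_i\,J^{-2}\overline{\rho}^2\big)$. Integrating by parts and using that $\overline{\rho}=0$ on $\Gamma$ (by \eqref{assum: brho3}) together with $v=0$ on $\Gamma_b$ to kill the boundary contributions, I obtain
\begin{equation*}
\int_\Omega\nabla_{J\mathcal{A}}(J^{-2}\overline{\rho}^2)\cdot v\,dx=-\int_\Omega J^{-2}\overline{\rho}^2\,\nabla_{J\mathcal{A}}\!\cdot\! v\,dx.
\end{equation*}
By \eqref{equ:d-J}, $\nabla_{J\mathcal{A}}\!\cdot\! v=J\,\mathcal{A}^s_r\partial_s v^r=\partial_t J$, so
\begin{equation*}
-\int_\Omega J^{-2}\overline{\rho}^2\,\partial_t J\,dx=\int_\Omega\overline{\rho}^2\,\partial_t(J^{-1})\,dx=\tfrac{d}{dt}\int_\Omega\overline{\rho}^2 J^{-1}\,dx,
\end{equation*}
which produces exactly the second term on the left.

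\textbf{Step 3 (viscous dissipation).} For the viscous term $-\int_\Omega v\cdot(\nabla_{J\mathcal{A}}\!\cdot\!\mathbb{S}_\mathcal{A} v)\,dx$, I again invoke the Piola identity to write $J\mathcal{A}^j_i\,\partial_j(\mathbb{S}_\mathcal{A} v)_{ik}=\partial_j\!\big(J\mathcal{A}^j_i(\mathbb{S}_\mathcal{A} v)_{ik}\big)$ and integrate by parts. The boundary terms are handled as follows: on $\Gamma$ the outward conormal is $\mathcal{N}=J\mathcal{A}e_3$, so the boundary contribution carries the factor $(\mathbb{S}_\mathcal{A} v)\mathcal{N}$, which vanishes by \eqref{eq:CNS1}$_3$; on $\Gamma_b$ it vanishes because $v=0$. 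This leaves
\begin{equation*}
-\int_\Omega v\cdot\nabla_{J\mathcal{A}}\!\cdot\!\mathbb{S}_\mathcal{A} v\,dx=\int_\Omega J\,\mathbb{S}_\mathcal{A}(v):\nabla_\mathcal{A} v\,dx,
\end{equation*}
since $J\mathcal{A}^j_i\partial_j v^k=J(\nabla_\mathcal{A} v)_{ik}$. Using the symmetry of $\mathbb{S}_\mathcal{A}$ to replace $\nabla_\mathcal{A} v$ by $\tfrac{1}{2}\mathbb{D}_\mathcal{A}(v)$, the decomposition $\mathbb{S}_\mathcal{A}(v)=\mu\,\mathbb{D}^0_\mathcal{A}(v)+(\lambda+\tfrac{2}{3}\mu)(\nabla_\mathcal{A}\!\cdot\! v)\mathbb{I}$, and the trace-freeness of $\mathbb{D}^0_\mathcal{A}$, the integrand reduces to the quadratic form stated in the proposition.

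\textbf{Main obstacle.} The only subtle point is the correct bookkeeping for the integrations by parts: one has to verify that applying Piola is legitimate (the identity is exactly \eqref{Piola-identity}) and that the free-surface boundary term cancels \emph{against the conormal vector $\mathcal{N}=J\mathcal{A}e_3$}, not against the Euclidean normal; this is what makes the Neumann-type condition $\mathbb{S}_\mathcal{A}(v)\mathcal{N}=0$ the natural one for closing the energy. Once these two observations are in place, the identity follows by simply summing the three contributions.
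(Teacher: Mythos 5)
Your proof is correct and follows exactly the route the paper takes for Proposition \ref{basic energy: v}: test the momentum equation in \eqref{eq:CNS1} against $v$, use the Piola identity \eqref{Piola-identity} to move the $\nabla_{J\cA}$ outside, integrate by parts, and kill the boundary terms via $\overline{\rho}=0$ on $\Gamma$, $v=0$ on $\Gamma_b$, and the conormal boundary condition $\mathbb{S}_\cA(v)\mathcal{N}=0$; the key reorganization of the pressure term via $\nabla_{J\cA}\cdot v=\partial_t J$ into $\frac{d}{dt}\int\overline{\rho}^2 J^{-1}$ is the same observation. The paper states this proof in a single sentence and leaves all details to the reader, so your write-up is a complete, correct expansion of it.
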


 \bigskip

%
%Multiply by $v$ on the first equation of \eqref{eq:CNS1} over $\Om$: the zeroth-order energy estimates formally lead to the following energy conservation:
%\begin{align*}
%\f12\f{d}{dt}(\int_{\Om}\brho|v|^2dx+2\int_{\Om}\brho^2J^{-1}dx)
%+\f12\int_{\Om}(\frac{\mu}{2}|\mathcal{D}^0_{\cA} v|^2+\nu |\na_{\cA} \cdot v|^2)Jdx=0.
%\end{align*}
%Here we use relation $\pa_t J=\na_{J\cA}\cdot v.$

\subsection{First-order estimate of $ v$}
Here, to get the higher regularity of the $v$. We multiply $\pa_t v$ on the both sides of \eqref{eq:CNS1} to get that
\begin{proposition}\label{pro2: v}
Assume that  \eqref{assum: energy3} holds and $v$ is a smooth solution of system \eqref{eq:CNS1} on $[0,T]$, then there holds that for $t\in [0, T]$
\begin{align*}
 \f12\f{d}{dt}\int_{\Om} (\f\mu2|\mathbb{D}^0_{\cA} v|^2+(\lambda+\frac{2}{3}\mu) |\na_\cA\cdot v|^2)\,J\,dx+\|\brho^{\f12}\pa_t v\|_{L^2}^2\leq (C_0+t^{\f12}\mathcal{P}(\frak{C}))(\frak{D}(v) \|\na v\|_{L^2}^2 +1).
\end{align*}
%Then, there exists $0<T\leq \bar{T}$ which depends on the initial data, $\sigma_0$ and $\frak{C}$ such that for any $t\in[0,T],$ it holds

\end{proposition}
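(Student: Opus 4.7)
The plan is to multiply the momentum equation in \eqref{eq:CNS1} by $\partial_t v$ and integrate over $\Omega$. The $\bar\rho\,\partial_t v$ term immediately yields $\|\bar\rho^{1/2}\partial_t v\|_{L^2}^{2}$, which is the target dissipation. The viscous term will be turned into the desired $\tfrac12\tfrac{d}{dt}$ of the quadratic form in $\mathbb{D}^0_\cA v,\nabla_\cA\!\cdot\! v$ modulo a commutator $R$, and the pressure term will be controlled by Cauchy--Schwarz against $\|\bar\rho^{1/2}\partial_t v\|_{L^2}$, exploiting that $\nabla(J^{-2}\bar\rho^{\,2})$ carries an honest $\bar\rho$ factor.

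For the viscous piece, I would use the Piola identity \eqref{Piola-identity} to rewrite $\bigl(\nabla_{J\cA}\!\cdot\!\mathbb{S}_\cA v\bigr)^{k}=\partial_j\bigl((J\cA)^{j}_{i}\,\mathbb{S}_\cA(v)^{ki}\bigr)$ and integrate by parts in space. The boundary contributions vanish: on $\Gamma$ the condition $\mathbb{S}_\cA(v)\mathcal{N}=0$ together with $\mathcal{N}=J\cA e_{3}$ kills the top term, while on $\Gamma_b$ the no-slip condition forces $\partial_t v=0$. What remains is $\int_{\Omega}J\,\mathbb{S}_\cA(v):\nabla_\cA\partial_t v\,dx$. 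Using the swap identity $\mathbb{D}_\cA(\partial_t v)=\partial_t\mathbb{D}_\cA(v)-\mathbb{D}_{\partial_t\cA}(v)$ coming from \eqref{equ:d-A}, and $\partial_t J=J\,\nabla_\cA\!\cdot\! v$ from \eqref{equ:d-J}, and recalling $\mathbb{S}_\cA(v):\nabla_\cA w=\tfrac{\mu}{2}\mathbb{D}^0_\cA(v):\mathbb{D}^0_\cA(w)+(\lambda+\tfrac23\mu)(\nabla_\cA\!\cdot\!v)(\nabla_\cA\!\cdot\!w)$, one obtains
\begin{equation*}
\int_{\Omega}J\,\mathbb{S}_\cA(v):\nabla_\cA\partial_t v\,dx=\tfrac12\tfrac{d}{dt}\!\int_{\Omega}J\!\Bigl[\tfrac{\mu}{2}|\mathbb{D}^0_\cA v|^{2}+(\lambda+\tfrac23\mu)|\nabla_\cA\!\cdot\!v|^{2}\Bigr]dx+R,
\end{equation*}
where $R$ is pointwise of the schematic form $(J+\cA^{2})\,(\nabla v)^{3}$. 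Putting one $\nabla v$ in $L^{\infty}$ via Lemma \ref{lem: interpolation} (which gives $\|\nabla v\|_{L^{\infty}}\le C_{0}\frak{D}(v)$) and the other two in $L^{2}$, and using Lemma \ref{lem:assumption} for $\|J\|_{L^\infty},\|\cA\|_{L^\infty}\le C_{0}(1+t^{1/2}\mathcal{P}(\frak{C}))$, yields $|R|\le(C_{0}+t^{1/2}\mathcal{P}(\frak{C}))\,\frak{D}(v)\,\|\nabla v\|_{L^2}^{2}$, which is exactly the first piece of the claimed right-hand side.

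For the pressure piece I would not look for a $\tfrac{d}{dt}$ structure but instead bound directly. The key identity is
\begin{equation*}
\nabla(J^{-2}\bar\rho^{\,2})=2J^{-2}\bar\rho\,\nabla\bar\rho-2\bar\rho^{\,2}J^{-3}\,\nabla J,
\end{equation*}
which together with \eqref{assum: brho2} and $J\ge\sigma_{0}$ gives $|\nabla_{J\cA}(J^{-2}\bar\rho^{\,2})|\le C_{0}\|\cA\|_{L^{\infty}}\bigl(\bar\rho+\bar\rho^{\,2}|\nabla J|\bigr)$. Splitting $\bar\rho=\bar\rho^{1/2}\!\cdot\!\bar\rho^{1/2}$ and $\bar\rho^{\,2}|\nabla J|=\bar\rho^{\,3/2}|\nabla J|\!\cdot\!\bar\rho^{1/2}$, pairing the second factor with $\partial_t v$, and noting that $\|\bar\rho^{1/2}\|_{L^{2}}\le C$ on the bounded $\Omega$, the task reduces to bounding $\|\bar\rho^{\,3/2}\nabla J\|_{L^{2}}$. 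Writing $J=\det D\eta$ with $\eta=x+\xi_{0}+\int_{0}^{t}v\,ds$, the second-order derivatives of $\xi_{0}$ that arise are tamed by $Z_{3}\nabla\xi_{0}=\bar\rho\,\partial_{3}\nabla\xi_{0}\in L^{2}$ (coming from $\nabla\xi_{0}\in X^{12}$) and the extra weight $\bar\rho^{1/2}\in L^{\infty}$; the $v$-time-integral piece is bounded by $\int_{0}^{t}\|\bar\rho^{3/2}\nabla^{2}v\|_{L^{2}}ds\le C_{0}t^{1/2}\|\frak{D}(v)\|_{L^{2}_{t}}\le t^{1/2}\mathcal{P}(\frak{C})$. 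This yields
\begin{equation*}
\Bigl|\int_{\Omega}\nabla_{J\cA}(J^{-2}\bar\rho^{\,2})\!\cdot\!\partial_t v\,dx\Bigr|\le(C_{0}+t^{1/2}\mathcal{P}(\frak{C}))\,\|\bar\rho^{1/2}\partial_t v\|_{L^{2}},
\end{equation*}
and Young's inequality produces the constant ``$+1$'' contribution to the right-hand side.

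Assembling the three pieces and absorbing a small portion of $\|\bar\rho^{1/2}\partial_t v\|_{L^{2}}^{2}$ from the Young step against the dissipation on the left gives the stated inequality. The delicate step, and the one I expect to be the main obstacle, is the bookkeeping of the commutator $R$: one must check that swapping $\partial_{t}$ through both $\cA$ and $J$ produces \emph{exactly} cubic-in-$\nabla v$ error terms (no term worse than $\|\nabla v\|_{L^{\infty}}\|\nabla v\|_{L^{2}}^{2}$), so that the $L^\infty$ slot can be filled by Lemma \ref{lem: interpolation} through the quantity $\frak{D}(v)$; any term requiring $\|\partial_t v\|_{L^2}$ (rather than $\|\bar\rho^{1/2}\partial_t v\|_{L^2}$) or $\|\nabla^{2}v\|$ with insufficient $\bar\rho$-weight would break the estimate, so care is needed in keeping the $\bar\rho$ factors aligned throughout.
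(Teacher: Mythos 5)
Your proposal is correct and follows essentially the same route as the paper: multiply the momentum equation by $\partial_t v$, integrate the viscous term by parts using the Piola identity (boundary contributions killed by $\mathbb{S}_\cA(v)\mathcal{N}=0$ on $\Gamma$ and $v|_{x_3=0}=0$ on $\Gamma_b$), extract the $\tfrac12\tfrac{d}{dt}$ structure with commutator errors $\propto\partial_t J,\partial_t\cA$ bounded by $(C_0+t^{1/2}\mathcal{P}(\frak{C}))\frak{D}(v)\|\nabla v\|_{L^2}^2$, and control the pressure term by Cauchy--Schwarz against $\|\bar\rho^{1/2}\partial_t v\|_{L^2}$ exploiting the $\bar\rho$ factor. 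The only cosmetic difference is in the pressure bookkeeping: the paper rewrites $\nabla_{J\cA}(J^{-2}\bar\rho^2)=\partial_k(J^{-1}\cA_i^k\bar\rho^2)$ via Piola and bounds $\|\bar\rho^{-1/2}\nabla_{J\cA}(J^{-2}\bar\rho^2)\|_{L^2}$ using $\|ZJ\|_{L^\infty}\|J\cA\|_{L^2}$, whereas you expand by the product rule and bound $\|\bar\rho^{3/2}\nabla J\|_{L^2}$ directly — both land in the same place.
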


\begin{proof}
Taking $L^2$ product with $\pa_t v$ to the first equation of \eqref{eq:CNS1} to get that
\begin{align*}
\|\brho^{\f12}\pa_t v\|_{L^2}^2+\int_{\Om}\na_{J\cA}(\brho^2J^{-2})\cdot\pa_t vdx-\int_{\Om}\na_{J\cA}\cdot \mS_{\cA} (v) \cdot\pa_t vdx=0.
\end{align*}
Due to the Piola identity \eqref{Piola-identity} and the boundary condition $\mathrm{S}_{\cA}(v)\cdot \mathcal{N}|_{x_3=1}=0$ and $v|_{x_3=0}=0$, integration by parts yields
\begin{align*}
-\int_{\Om}\na_{J\cA}\cdot \mS_{\cA} (v)\cdot\pa_t vdx=&\int_{\Om}J \mS_{\cA} (v): \pa_t(\na_\cA v)dx-\int_{\Om}J\mS_{\cA}(v):\na_{\pa_t\cA} vdx.
\end{align*}
Since $\mathbb{D}_\cA(v)$ and $(\na_\cA \cdot v)\mathbb{I}$ are  symmetric quantities, it implies that
\begin{equation*}
\begin{split}
&\int_{\Om}J \mS_{\cA} (v): \pa_t(\na_\cA v)dx=\int_{\Om}J (\mu\mathbb{D}_\cA(v)+\la (\na_\cA \cdot v)\mathbb{I}): \pa_t(\na_\cA v)dx\\
&=\f{\mu}{2}\int_{\Om}J \mathbb{D}_\cA(v): \pa_t\mathbb{D}_\cA(v)dx+\la\int_{\Om}J\na_\cA \cdot v~\pa_t(\na_\cA \cdot v)\\
&
=\f12\f{d}{dt}\int_{\Om}J (\f\mu2|\mathbb{D}^0_{\cA} v|^2+(\lambda+\frac{2}{3}\mu) |\na_\cA\cdot v|^2)\,dx-\f12\int_{\Om}\pa_tJ\Big(\f{\mu}{2} |\mathbb{D}_\cA(v)|^2+\la |\na_\cA \cdot v|^2\Big)dx\\
&=\f12\f{d}{dt}\int_{\Om}J\mathbb{S}_\cA (v): \na_{\cA}v\,dx-\f12\int_{\Om}\mathbb{S}_\cA (v): \na_{\cA}v\pa_t Jdx,
\end{split}
\end{equation*}
which gives that
\begin{align*}
&-\int_{\Om}\na_{J\cA}\cdot \mS_{\cA} (v)\cdot\pa_t vdx
=\f12\f{d}{dt}\int_{\Om}J (\f\mu2|\mathbb{D}^0_{\cA} v|^2+(\lambda+\frac{2}{3}\mu) |\na_\cA\cdot v|^2)\,dx\\
&\qquad\qquad\qquad\qquad-\f12\int_{\Om}\mathbb{S}_\cA (v): \na_{\cA}v\pa_t Jdx
-\int_{\Om}J\mS_{\cA}(v):\na_{\pa_t\cA} vdx.
\end{align*}
To estimate the last two terms of right hand of the above equation, we recall that formula \eqref{equ:d-A}-\eqref{equ:d-J}, Lemma \ref{lem: interpolation} and Lemma \ref{lem:assumption} to get that
\beno
\|\pa_tJ,\pa_t \cA\|_{L^\infty}\leq C_0\|\cA\|_{L^\infty}^2\|\na v\|_{L^\infty}\leq (C+t^{\f12}\mathcal{P}(\frak{C}))\frak{D}(v),
\eeno
which implies that
\beno
&&|\int_{\Om}\mathbb{S}_\cA (v): \na_{\cA}v\pa_t Jdx|+|\int_{\Om}J\mS_{\cA}(v):\na_{\pa_t\cA} vdx|\\
&\leq&(C_0+t^{\f12}\mathcal{P}(\frak{C}))\frak{D}(v)\|J\cA\|_{L^\infty}\|\na v\|_{L^2}^2\leq (C_0+t^{\f12}\mathcal{P}(\frak{C}))\frak{D}(v) \|\na v\|_{L^2}^2 .
\eeno

For the pressure term, we notice it contains $\brho^2$. Thus, we have
\beno
\brho^{-\f12}\na_{J\cA}(\brho^2J^{-2})=\brho^{-\f12}\pa_k(J^{-1}\cA^k_i\brho^2)=\brho^{-\f12}\Big( J^{-1}\cA^k_i\pa_k\brho^2+\pa_k(J^{-2})J\cA^k_i\brho^2 \Big),
\eeno
which implies that for all $t\in [0, T]$, we have
\beno
\|\brho^{-\f12}\na_{J\cA}(\brho^2J^{-2})\|_{L^2}
&\leq&  \|\brho^{\f12}\brho'\|_{L^\infty}(C+t^{\f12}\mathcal{P}(\frak{C})) \|\cA\|_{L^2}+(C_0+t^{\f12}\mathcal{P}(\frak{C}))\|ZJ\|_{L^\infty}\|J\cA\|_{L^2}\\
&\leq& C_0+t^{\f12}\mathcal{P}(\frak{C}),
\eeno
where we use Lemma \ref{lem: cA,JcA}.
Thus, by Holder inequality, we have
\beno
\Big|\int_{\Om}\na_{J\cA}(\brho^2J^{-2})~\pa_t vdx\Big|&\leq& \|\brho^{\f12}\pa_t v\|_{L^2}\|\brho^{-\f12}\na_{J\cA}(\brho^2J^{-2})\|_{L^2}\\
&\leq& (C_0+t^{\f12}\mathcal{P}(\frak{C}))\|\brho^{\f12}\pa_t v\|_{L^2}\leq C_0+t^{\f12}\mathcal{P}(\frak{C})+ \f12\|\brho^{\f12}\pa_t v\|_{L^2}^2.
\eeno
By now, we get the desired result.
\end{proof}

\bigskip

\subsection{High-order estimates of $v$ }
In this subsection, we use the conormal derivative to get the regularity of the horizontal direction. The following is our main results of this subsection:
\begin{proposition}\label{pro1: v }
Assume that  \eqref{assum: energy3} holds and $v$ is a smooth solution of system \eqref{eq:CNS1} on $[0,T]$, then it holds that
\begin{align*}
&\f{d}{dt}\|v\|_{X^{12}_{\f12}}^2+\Big(c_0-\d(C_0+t^{\f12}\mathcal{P}(\frak{C}))\Big)\|\na v\|_{X^{12}}^2\\
&\qquad\leq t^{\f12}\mathcal{P}(\frak{C})\frak{D}^2(v)+C_0\|v\|_{X^{12}_\f12}^2+C_0+t^{\f12}\mathcal{P}(\frak{C}),
\end{align*}

\end{proposition}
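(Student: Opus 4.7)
The plan is to perform conormal energy estimates at order $|m|\leq 12$. For each multi-index $m$ with $1\leq|m|\leq 12$, apply $Z^m$ to the momentum equation
\[
\brho\,\pa_t v = \na_{J\cA}\cdot \mS_{\cA}v - \na_{J\cA}(J^{-2}\brho^2),
\]
take the $L^2$ inner product with $\d^{2|m|}Z^m v$, and sum, combining with the $|m|=0$ contribution already supplied by Proposition \ref{basic energy: v}. Since $\brho$ is time-independent and $[Z^m,\pa_t]=0$, the time-derivative terms assemble into $\f12\f{d}{dt}\|v\|_{X^{12}_{\f12}}^2$ once we commute $\brho$ through $Z^m$; the commutator $[Z^m,\brho]$ is schematically $\brho\,Z^{m-1}$ (from $[Z_h,\brho]=\pa_h\brho\sim\brho$ and $[Z_3,\brho]=\brho\,\pa_3\brho$), and its contribution will be absorbed as a lower-order error.

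The first main step is to extract coercivity from the viscous term. After integrating by parts using the Piola identity \eqref{Piola-identity} and the boundary conditions ($\mS_{\cA}(v)\mathcal{N}|_{x_3=1}=0$, $v|_{x_3=0}=0$, together with $\brho|_{x_3=1}=0$ which kills any $Z_3$-contribution at the free boundary), this term generates
\[
\sum_{|m|=0}^{12}\d^{2|m|}\int_{\Om} J\,\mS_{\cA}(Z^m v):\na_{\cA}Z^m v\,dx
\]
plus commutators. Applying Lemma \ref{lem: equal na v} to each $Z^m v$ and the conormal Korn inequality of Lemma \ref{Lem: H^1}, this quadratic form bounds $c_0\|\na v\|_{X^{12}}^2-C_0\|v\|_{X^{12}_{\f12}}^2$ from below, producing exactly the coercive term on the left-hand side and an allowable $C_0\|v\|_{X^{12}_{\f12}}^2$ on the right.

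The heart of the argument is to control three families of remainder terms: the $[Z^m,\brho]\pa_t v$ piece from the time derivative (handled by substituting $\brho\pa_t v$ back using the momentum equation); the commutators $[Z^m,\na_{J\cA}\cdot]$ and $[Z^m,\mS_{\cA}]$ generated by pushing $Z^m$ past the viscous flux; and $Z^m\na_{J\cA}(J^{-2}\brho^2)$ coming from the pressure. All three fit the product framework of Lemma \ref{lem: product2} combined with the coefficient bounds of Lemmas \ref{lem: cA,JcA} and \ref{lem:high nav cA} and the interpolation inequalities of Lemma \ref{lem: interpolation}. The sub-top-order commutators produce bounds of the form $(C_0+t^{1/2}\mathcal{P}(\frak{C}))\|\na v\|_{X^{12}}\cdot \d^{|m|}\|Z^m v\|_{L^2}$, which after Young's inequality contribute $C_0\|v\|_{X^{12}_{\f12}}^2$ plus a small multiple of $\|\na v\|_{X^{12}}^2$ that can be absorbed on the left.

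The hardest pieces will be the top-order $|m|=12$ commutators where every copy of $Z$ lands on $J\cA$ or $\cA$, producing factors like $(Z^{12}(J\cA))\,\na v$. Here the $\d^{2|m|}$ weight in the $X^{12}$ norm is decisive: Lemma \ref{lem: cA,JcA} supplies $\d^{12}\|Z^{12}(J\cA)\|_{L^2}\leq C_0(1+t^{1/2}\mathcal{P}(\frak{C}))$, and combining with the $L^\infty$ control on $\na v$ from Lemma \ref{lem: interpolation} bounds the resulting integrals by $\d(C_0+t^{1/2}\mathcal{P}(\frak{C}))\,\frak{D}(v)\,\|\na v\|_{X^{12}}$; Young's inequality then generates exactly the $\d(C_0+t^{1/2}\mathcal{P}(\frak{C}))\|\na v\|_{X^{12}}^2$ tolerance appearing in the coercive coefficient in the statement, along with the $t^{1/2}\mathcal{P}(\frak{C})\frak{D}(v)^2$ remainder on the right. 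Finally, for the pressure integral the two powers of $\brho$ in $J^{-2}\brho^2$ mesh with the $\brho^{1/2}$ weight of $X^{12}_{\f12}$; one integration by parts via the Piola identity \eqref{Piola-identity}, combined with Lemma \ref{lem: cA,JcA} and the Hardy inequality (Lemma \ref{lem: hardy}), bounds its total contribution by $C_0+t^{1/2}\mathcal{P}(\frak{C})$, completing the chain of inequalities needed.
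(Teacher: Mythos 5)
Your proposal follows the same strategy as the paper's proof: conormal energy estimates with $Z^m$ and $\d^{2|m|}$ weights, coercivity from the Korn-type Lemmas~\ref{Lem: H^1} and~\ref{lem: equal na v}, the commutator $[Z^m,\brho]\pa_t v$ handled by substituting the momentum equation back in, and the remaining commutator and pressure terms controlled through Lemmas~\ref{lem: interpolation}--\ref{lem:high nav cA}. One point that should be spelled out more carefully: after integrating the viscous flux by parts, the boundary term $\int_{x_3=1}\mathcal{N}\cdot Z_h^m\mS_{\cA}v\cdot Z_h^m v\,dS$ does not vanish on its own; it cancels precisely against the boundary term produced when the $\pa_k\big(Z^m(J\cA_i^k(\mS_\cA v)^i)-J\cA_i^k Z^m(\mS_\cA v)^i\big)$ piece of $[Z^m,\na_{J\cA}]\cdot\mS_\cA v$ is itself integrated by parts, using $Z_h^m(\mS_\cA v\,\mathcal{N})=0$ on $\{x_3=1\}$ --- the fact that $\brho$ vanishes there removes the $Z_3$ part but is not by itself enough for the $Z_h$ part.
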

\begin{proof}
Acting $Z^m$ on the first equation of \eqref{eq:CNS1} and taking $L^2$ inner product with $\d^{2|m|}Z^mv$, then summing  $\sum_{|m|=0}^{12}$ to obtain
\begin{align*}
\f12\f{d}{dt}\|v\|_{X^{12}_{\f12}}^2-\sum_{|m|=0}^{12}\d^{2|m|}\int_{\Om}Z^m\big(\na_{J\cA}\cdot \mS_{\cA}v\big)\cdot Z^m v\,dx= I_1+I_2
\end{align*}
with
\begin{align*}
&I_1=\sum_{|m|=1}^{12}\d^{2|m|} \int_{\Om}[\brho,Z^m]\pa_t v\cdot Z^m v \, dx,\quad\,I_2=-\sum_{|m|=0}^{12}\d^{2|m|}\int_{\Om}Z^m\big(\na_{J\cA}(J^{-2}\brho^2)\big)\cdot Z^m v \,dx.
\end{align*}

{\bf{Estimate of dissipation term.}}
For the dissipation term, by using integration by parts, we split it into three parts:
\begin{align*}
&-\sum_{|m|=0}^{12}\d^{2|m|} \int_{\Om}Z^m\big(\na_{J\cA}\cdot \mS_{\cA}v\big)\cdot Z^m vdx\\
&=\sum_{|m|=0}^{12}\d^{2|m|} \int_\Om J\mathbb{S}_\cA(Z^mv): \na_{\cA}Z^m v \,dx+\sum_{|m|=1}^{12}\d^{2|m|} \int_{\Om}[Z^m,\mS_{\cA}]v: \na_{J\cA}(Z^m v) dx\\
 &-\sum_{|m|=1}^{12}\d^{2|m|} \bigg(\int_{x_3=1} \mathcal{N}\cdot Z_h^m\mS_{\cA}v\cdot Z_h^m v dS+ \int_{\Om}[Z^m,\na_{J\cA}]\cdot \mS_{\cA}v \cdot Z^m v \,dx\bigg)=:I_3+I_4+I_5.
\end{align*}

\medskip

Next, we deal with the commutators $I_3, I_4$ and $I_5$.

$\bullet$ \underline{Estimates of $I_3$}. Thanks to Lemma \ref{lem: equal na v}, one can see that for any $m:|m|=0, 1,...,12$
\begin{equation*}\label{eq: commutator1-1}
\begin{split}
&\int_\Om J\mathbb{S}_\cA(Z^mv): \na_{\cA}Z^m v \,dx\\
&=\int_{\Om} (\f\mu4|\mathbb{D}^0_{\cA} Z^mv|^2+\frac{\lambda+\frac{2}{3}\mu}{2} |\na_\cA\cdot Z^mv|^2)\,J\,dx\\
&\geq c_1\|\mathbb{D}^0_{\cA} Z^mv\|_{L^2}^2 \geq  c_1\bigg((c_0- t^2 \mathcal{P}(\frak{C}))\|\na (Z^mv)\|_{L^2}^2-C_0\|\brho^\f12 Z^mv\|_{L^2}^2\bigg),
\end{split}
\end{equation*}
which implies
\begin{equation}\label{eq: commutator1-2}
\begin{split}
&\sum_{|m|=0}^{12}\d^{2|m|}\int_\Om J\mathbb{S}_\cA(Z^mv): \na_{\cA}Z^m v \,dx\geq  \sum_{|m|=0}^{12}\d^{2|m|}c_1\bigg(\frac{1}{2}(c_0- t^2 \mathcal{P}(\frak{C}))\|Z^m\nabla\,v\|_{L^2}^2\\
&\qquad\qquad\qquad\qquad\qquad\qquad-\frac{1}{2}(c_0+ t^2 \mathcal{P}(\frak{C}))\|[\na,Z^m]v\|_{L^2}^2-C_0\|\brho^\f12 Z^mv\|_{L^2}^2\bigg),
\end{split}
\end{equation}
For $|m|\geq1$, by direct calculation, we have
\begin{align}\label{eq: commutator1ab}
[\na,Z^m]=m\na \brho Z^{m-1}\pa_{3},
\end{align}
which implies that
\begin{equation}\label{eq: commutator1-3}
\sum_{|m|=1}^{12}\d^{2|m|}\frac{c_1}{2}(c_0+ t^2 \mathcal{P}(\frak{C}))\|[\na,Z^m]v\|_{L^2}^2\leq (C_0+ t^2 \mathcal{P}(\frak{C}))\d^2\|\na v\|_{X^{11}}^2.
\end{equation}
Plugging \eqref{eq: commutator1-3} into \eqref{eq: commutator1-2} shows
\begin{equation*}\label{eq: commutator1-4}
\begin{split}
&\sum_{|m|=0}^{12}\d^{2|m|}\int_\Om J\mathbb{S}_\cA(Z^mv): \na_{\cA}Z^m v \,dx\\
&\geq (c_2- t^2 \mathcal{P}(\frak{C}))\|\nabla\,v\|_{X^{12}}^2-(C_0+ t^2 \mathcal{P}(\frak{C}))\d^2\|\na v\|_{X^{11}}^2-C_0\| v\|_{X^{12}_{\f12}}^2.
\end{split}
\end{equation*}

$\bullet$ \underline{Estimates of $I_4$}. For $|m|\geq 1,$ by a direct calculation, we have
\begin{equation*}\label{eq: commutator2-1}
\begin{split}
&[Z^m,\mD_{\cA}]v= Z^m\Big(\cA_{i}^k\pa_kv_j+\cA_{j}^k\pa_kv_i   \Big)-\Big(\cA_{i}^k\pa_k(Z^mv_j)+\cA_{j}^k\pa_k(Z^mv_i)   \Big)\\
&=\cA_{i}^k[Z^m,\pa_k]v_j+\cA_{j}^k[Z^m,\pa_k]v_i+\sum_{\substack{|m_1|+|m_2|=|m|,\\|m_1|\geq1}}(Z^{m_1} \cA_{i}^kZ^{m_2}\pa_kv_j+Z^{m_1}\cA_{j}^kZ^{m_2}\pa_kv_i)\\
&=m\pa_k \brho\cA_{i}^3Z^{m-1}\pa_3v_j+m\pa_k \brho \cA_{j}^3Z^{m-1}\pa_3v_i\\
&\quad+\sum_{\substack{\substack{|m_1|+|m_2|=|m|,\\|m_1|\geq1}}}(Z^{m_1} \cA_{i}^kZ^{m_2}\pa_kv_j+Z^{m_1}\cA_{j}^kZ^{m_2}\pa_kv_i).
\end{split}
\end{equation*}
By Lemmas \ref{lem:assumption}, \ref{lem:high nav cA}, we have
 \beno
 \d^{|m|}\|[Z^m,\mD_{\cA}]v\|_{L^2}&\leq & \d\Big((C_0+t^{\f12}\mathcal{P}(\frak{C}))\|\na v\|_{X^{11}}+C_0\|\na v\|_{X^{11}}+t^\f12\mathcal{P}(\frak{C})\frak{ D}(v)\Big)\\
 &\leq&\d\Big( C_0\|\na v\|_{X^{11}}+t^\f12\mathcal{P}(\frak{C})\frak{ D}(v)\Big).
  \eeno
  By the same argument, we have
   \beno
\d^{|m|}\|[Z^m,\dv_{\cA}]v\|_{L^2}\leq \d\Big( C_0\|\na v\|_{X^{11}}+t^\f12\mathcal{P}(\frak{C})\frak{ D}(v)\Big).
  \eeno

Combining the above two estimates, we have
\begin{align*}
I_4\leq&  \d(C_0+t^{\f12}\mathcal{P}(\frak{C}))(\d \|\na v\|_{X^{11}}+\|\na v\|_{X^{12}})(C_0\|\na v\|_{X^{11}}+t^\f12\mathcal{P}(\frak{C})\frak{ D}(v)).\\
\leq& \d(C_0+t^{\f12}\mathcal{P}(\frak{C}))\|\na v\|_{X^{12}}^2+t^{\f12}\mathcal{P}(\frak{C})\frak{ D}(v)^2.
\end{align*}

$\bullet$ \underline{Estimates of $I_5$}. A direct calculation gives that
 \begin{align}
 [Z^m,\na_{J\cA}]\cdot \mS_{\cA }v=&Z^m(J\cA_i^k\pa_k(\mS_{\cA} v)^i)-\pa_k(J\cA_i^k(Z^m(\mS_{\cA} v))^i)\nonumber\\
 =&\pa_k\Big(Z^m(J\cA_i^k(\mS_{\cA} v)^i)-J\cA_i^k(Z^m(\mS_{\cA} v))^i\Big)+[Z^m,\pa_k](J\cA_i^k(\mS_{\cA} v)^i).\label{equ: I_5}
  \end{align}
 For the commutator term, we see
\begin{align}\label{est: commutator11}
[Z^m,\pa_3]=-m\pa_3\brho Z^{m-1}\pa_3\sim Z^{m-1}\pa_3,~[Z^m,\pa_h]=-m\pa_h\brho Z^{m-1}\pa_3\sim  Z^{m},
\end{align}
where we used \eqref{assum: brho2}.
Then we have
\begin{align*}
&\Big|\int_{\Om}[Z^m,\pa_k](J\cA_i^k(\mS_{\cA} v)^i)\cdot Z^m v dx\Big|\\
&\leq C_0\Big|\int_{\Om}Z^{m-1}\pa_3(J\cA_i^k(\mS_{\cA} v)^i)\cdot Z_3 Z^{m-1} v dx\Big|\\
&\quad+C_0\Big|\int_{\Om}Z^{m}(J\cA_i^k(\mS_{\cA} v)^i)\cdot Z^m v dx\Big|\leq C_0\Big|\int_{\Om} Z^{m}(J\cA_i^k(\mS_{\cA} v)^i)\cdot Z^{m-1}\na vdx\Big|,
\end{align*}
which combining with  Lemma \ref{lem:high nav cA} follows
\begin{align*}
&\Big|\sum_{|m|=1}^{12}\d^{2|m|}\int_{\Om}[Z^m,\pa_k](J\cA_i^k(\mS_{\cA} v)^i)\cdot Z^m v dx\Big|\\
&\leq
\d ( C_0\|\na v\|_{X^{12}}+t^\f12\mathcal{P}(\frak{C})\frak{ D}(v)) \|\na v\|_{X^{11}}.
\end{align*}
Now, we deal with the first term of the right hand of \eqref{equ: I_5}. By using integration by parts, one has
\begin{align*}
&\sum_{|m|=1}^{12}\d^{2|m|} \int_{\Om}\pa_k\Big(Z^m(J\cA_i^k(\mS_{\cA} v)^i)-J\cA_i^k(Z^m(\mS_{\cA} v))^i\Big)\cdot Z^m v dx\\
&=-\sum_{|m|=1}^{12}\d^{2|m|} \int_{\Om}\Big(Z^m(J\cA_i^k(\mS_{\cA} v)^i)-J\cA_i^k(Z^m(\mS_{\cA} v))^i\Big)\cdot \pa_kZ^m v dx\\
&\quad+\sum_{|m|=1}^{12}\d^{2|m|}\int_{x_3=1}\Big(Z_h^m(J\cA_i^3e_3(\mS_{\cA} v)^i)-J\cA_i^3e_3(Z_h^m(\mS_{\cA} v))^i\Big)\cdot Z_h^m vdS.
\end{align*}
Because of $\mS_{\cA} (v) \mathcal{N}=0$ on the boundary $\{x_3=1\}$, $J\cA_i^3e_3= \mathcal{N}$, and $Z_h^m(\mS_{\cA} v \mathcal{N})=0$ on $\{x_3=1\}$, the second term on the above equality plus the second term of $I_5$ is zero:
\begin{align*}
\sum_{|m|=1}^{12}\d^{2|m|}\int_{x_3=1}&\Big(Z_h^m(\mathcal{N}(\mS_{\cA} v))-\mathcal{N}(Z_h^m(\mS_{\cA} v))\Big)~Z_h^m vdS\\
&+\sum_{|m|=1}^{12}\d^{2|m|} \int_{x_3=1}\mathcal{N}Z_h^m\mS_{\cA}v~Z_h^m v dS=0.
\end{align*}
Hence, all we left is to deal with the commutator
\beno
\int_{\Om}\Big(Z^m(J\cA_i^k(\mS_{\cA} v)^i)-J\cA_i^k(Z^m(\mS_{\cA} v))^i\Big)\cdot \pa_kZ^m v dx.
\eeno
By the same arguments as $I_4$ and using Lemma \ref{lem: interpolation}-\ref{lem:high nav cA}, we deduce that
 \begin{align*}
 \Big| \sum_{|m|=1}^{12}\d^{2|m|}\int_{\Om}&\Big(Z^m(J\cA_i^k(\mS_{\cA} v)^i)-J\cA_i^k(Z^m(\mS_{\cA} v))^i\Big)\cdot \pa_kZ^m vdx  \Big|  \\
 \leq& \d(C_0+t^{\f12}\mathcal{P}(\frak{C}))\|\na v\|_{X^{12}}^2+t^{\f12}\mathcal{P}(\frak{C})\frak{ D}(v)^2.
 \end{align*}
Combining all the above estimates, we get that
\begin{align*}
I_5\leq \d(C_0+t^{\f12}\mathcal{P}(\frak{C}))\|\na v\|_{X^{12}}^2+t^{\f12}\mathcal{P}(\frak{C})\frak{ D}(v)^2.
\end{align*}

So far, we obtain
\begin{equation*}\label{est:disspation}
\begin{split}
&-\sum_{|m|=0}^{12}\d^{2|m|}\int_{\Om}Z^m\big(\na_{J\cA}\cdot \mS_{\cA}v\big)\cdot Z^m vdx\\
&\geq \Big(c_2-\d(C_0+t^{\f12}\mathcal{P}(\frak{C}))\Big)\|\na v\|_{X^{12}}^2-C_0\|v\|_{X^{12}_\f12}^2-t^{\f12}\mathcal{P}(\frak{C})\frak{ D}(v)^2.
\end{split}
\end{equation*}

\medskip

{\bf{Estimate of $I_2.$}} Now, we deal with the pressure.
\begin{align*}
I_2=&\sum_{|m|=0}^{12}\d^{2|m|}\int_{\Om}\pa_kZ^m\big(\cA_i^kJ^{-1}\brho^2\big)\cdot Z^m v^idx\\
&\,+\sum_{|m|=1}^{12}\d^{2|m|}\int_{\Om}[Z^m,\pa_k]\big(\cA_i^kJ^{-1}\brho^2\big)\cdot Z^mv^i\,dx
\triangleq I_{21}+I_{22}.
\end{align*}

$\bullet$ \underline{Estimates of $I_{22}$}.
 Since $Z^m \brho^2\sim \brho^2$ for any $m$, we use \eqref{eq: commutator1ab} and  Lemmas \ref{lem: product2}-\ref{lem: cA,JcA} to get
 \begin{align*}
 I_{22}
 \leq  \d (C_0+t^{\f12}\mathcal{P}(\frak{C}))\|\na v\|_{X^{11}}.
 \end{align*}

 $\bullet$ \underline{Estimates of $I_{21}$}. Because of $\brho|_{x_3=1}=0,$ the boundary terms vanish when we integrate by parts. By the same argument as  $I_5$, it is easy to see $I_{21}$ is bounded by
\begin{align*}
I_{21}&\leq  (C_0+t^{\f12}\mathcal{P}(\frak{C}))(\|\na v\|_{X^{12}}+\d\|\na v\|_{X^{11}}).
\end{align*}

Combining the two estimates, we get
\begin{align*}
I_2\leq &
(C_0+t^{\f12}\mathcal{P}(\frak{C}))(\|\na v\|_{X^{12}}+\d\|\na v\|_{X^{11}}).
\end{align*}

{\bf{Estimate of $I_1.$}}
 For $m\geq1,$ it holds that
 \begin{equation*}\label{eq: commutator2}
 [\brho,Z^m]\sim \sum_{k=0}^{m-1} f_kZ^{k}(\brho\cdot) .
 \end{equation*}
 where $f_k$ are smooth functions which are defined by $\brho$. Thus
\begin{align*}
I_1
&\leq C_0\sum_{\substack{ |m|=1}}^{12}\sum_{k=0}^{m-1}\d^{2|m|} \Big|\int_{\Om}Z^k(\brho\pa_t v)\cdot Z^m vdx\Big|\\
&=C_0\sum_{\substack{ |m|=1}}^{12}\sum_{k=0}^{m-1}\d^{2|m|} \Big|\int_{\Om}Z^k(-\na_{J\cA}(J^{-2}\brho^2)+\na_{J\cA}\cdot \mS_{\cA}v)\cdot Z^m vdx\Big|.
\end{align*}
From the formula above, $I_1$ can be regarded as  lower term to $I_2$ plus dissipation term with the highest order 11. Since $k\leq m-1,$ extra $\d$ is left. Thus, we have
    \begin{align*}
&I_1\leq \d(C_0+t^{\f12}\mathcal{P}(\frak{C}))( \|\na v\|_{X^{12}} +  \d\|\na v\|_{X^{11}})\\
&\qquad\qquad\qquad\qquad+\d (C_0+t^{\f12}\mathcal{P}(\frak{C}))(C_0\|\na v\|_{X^{11}}+t^\f12\mathcal{P}(\frak{C})\frak{ D}(v)) \|\na v\|_{X^{12}}.
\end{align*}
Collecting all estimates together, we finally obtain
\begin{align*}
&\f{d}{dt}\|v\|_{X^{12}_{\f12}}^2+\Big(c_0-\d(C_0+t^{\f12}\mathcal{P}(\frak{C}))\Big)\|\na v\|_{X^{12}}^2\\
&\leq  t^{\f12}\mathcal{P}(\frak{C})\frak{D}^2(v)+C_0\|v\|_{X^{12}_\f12}^2+C_0+t^{\f12}\mathcal{P}(\frak{C}),
\end{align*}
which implies the desired results.
\end{proof}

\bigskip

\subsection{Estimate for $\frak{D}(v)$}
To close the energy estimates, all we left is the estimate of $\frak{D}(v)$ which should be controlled by the energy.

\begin{lemma}\label{lem: relation}
Assume that \eqref{assum: energy3}. Then there exists  $0<T\leq \bar{T}$ and $\d_0>0$  which depend on the initial data, $\sigma_0$ and $\frak{C}$ such that for any $t\in[0,T]$ and $\d\in(0,\d_0)$, it holds that
\begin{align*}
\frak{D}^2(v)\leq C\mathcal{D}(t)+(C+t^{\f12}\mathcal{P}(\frak{C}))(1+t^\f12\frak{D}(v)).
\end{align*}

\end{lemma}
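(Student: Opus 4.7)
The plan is to split $\frak{D}(v)^2$ into the tangential contribution $\|\na v\|_{X^{12}}^2$, which is immediately bounded by $\mathcal{D}(t)$ via its definition, and the weighted-Laplacian contribution $\|\brho^{-\f12+\kappa}\tri v\|_{L^2}^2$. All the work goes into the latter, which must be extracted from the momentum equation viewed as an elliptic identity for $v$.

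First I would rewrite the momentum equation as
$$\na_{J\cA}\cdot\mS_\cA v = \brho\pa_t v + \na_{J\cA}(J^{-2}\brho^{2})$$
and expand the left-hand side. Using the Piola identity \eqref{Piola-identity} and the definition of $\mathbb{S}_\cA$, the principal second-order part reads $J\mu(\cA\cA^T)^{jl}\pa_j\pa_l v^i + J(\mu+\lambda)\cA^l_i\cA^j_k\pa_j\pa_l v^k$, modulo lower-order pieces carrying $\pa\cA$ and $\pa(J\cA)$ factors. Reading off the coefficient of the normal-normal derivative $\pa_3^2 v$ yields the $3\times 3$ matrix $J\mu|\cA^3|^2\,I + J(\mu+\lambda)\cA^3(\cA^3)^T$, which is symmetric and strictly positive-definite under the hypotheses $\mu>0$, $\lambda+\tfrac{2}{3}\mu\geq 0$, and $\sigma_0\leq J\leq 4\sigma_0$; its inverse is $L^\infty$-bounded by a constant depending only on initial data and on the bounds of Lemma \ref{lem: cA,JcA}. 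Inverting, I can solve for $\pa_3^2 v$ in terms of $\brho\pa_t v$, the pressure gradient, tangential second derivatives of $v$, and coefficient commutators.

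Weighting by $\brho^{-\f12+\kappa}$ and taking $L^2$ norms, I estimate each piece in turn. The inertial term contributes $\|\brho^{\f12+\kappa}\pa_t v\|_{L^2}\leq C\|\brho^{\f12}\pa_t v\|_{L^2}\leq C\mathcal{D}(t)^{\f12}$. The pressure term, after expansion, produces only bounded quantities of the form $\brho^{\f12+\kappa}|\na\brho|J^{-1}|\cA|$ and $\brho^{\f32+\kappa}|J\cA||\na J^{-2}|$, which by Lemmas \ref{lem: cA,JcA} and \ref{lem:assumption} are controlled by $C(1+t^{\f12}\mathcal{P}(\frak{C}))$. The horizontal and mixed second derivatives $\pa_h^2 v$, $\pa_h\pa_3 v$ appearing after inversion are reduced, by iterated application of Hardy's inequality (Lemma \ref{lem: hardy}) and the commutator identities $[\pa_k,Z^m]\sim Z^{m-1}\pa_k$ combined with $Z_3=\brho\pa_3$, to conormal quantities bounded by $\|v\|_{X^{12}}$ and thus by $\mathcal{D}(t)^{\f12}$. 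Finally, the lower-order commutator terms involving $\pa(J\cA)$ and $\pa\cA$ against $\na v$ are estimated through Lemma \ref{lem:high nav cA} as $(C+t^{\f12}\mathcal{P}(\frak{C}))\|\na v\|_{X^{12}}+t^{\f12}\mathcal{P}(\frak{C})\frak{D}(v)$; the extra $t^{\f12}$ arises because $\cA-\cA_0\sim\int_0^t\na v\,ds$ is the only place where time integration invokes a Cauchy--Schwarz factor. Squaring, grouping constants, and applying Young's inequality on the mixed linear-in-$\frak{D}(v)$ term then produce the claimed inequality.

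The hardest step will be the handling of $\|\brho^{-\f12+\kappa}\pa_h^2 v\|_{L^2}$ and $\|\brho^{-\f12+\kappa}\pa_h\pa_3 v\|_{L^2}$: the negative weight $\brho^{-\f12+\kappa}$ is not directly absorbed into $\|\cdot\|_{X^{12}}$, so Hardy's inequality must be iterated while the surplus $\f12-\kappa$ is spent at each pass through $\pa_3$. The restriction $\kappa\in(0,\f1{16})$ inherited from Lemma \ref{lem: interpolation} is exactly what ensures this iteration closes within the twelve conormal derivatives available in $X^{12}$ without accidentally multiplying out the estimate by a factor that destroys the required $t^{\f12}$ smallness on the $\frak{D}(v)$-dependent terms.
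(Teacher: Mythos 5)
Your overall architecture is the same as the paper's: split $\frak{D}(v)^2$ into the tangential piece $\|\na v\|_{X^{12}}^2$ (already in $\mathcal{D}$) and the weighted-Laplacian piece, then extract $\pa_3^2 v$ from the momentum equation. The paper freezes the elliptic operator at $t=0$ and controls the difference $\na_{J_0\cA_0}\cdot\mS_{\cA_0}v-\na_{J\cA}\cdot\mS_{\cA}v$ via \eqref{eq: JcA-I}--\eqref{est: JcA-I}, whereas you invert the principal symbol at time $t$; both routes are equivalent once Lemma \ref{lem: cA,JcA} bounds the coefficients, and your handling of the inertial, pressure and low-order commutator terms is correct.

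The gap is precisely at the step you flag as the hardest: controlling $\|\brho^{-\f12+\kappa}\pa_h\pa_3 v\|_{L^2}$. Your ``iterated Hardy, spending the surplus $\f12-\kappa$ at each pass through $\pa_3$'' does not close. Each application of Lemma \ref{lem: hardy} does improve the weight from $\brho^{-\f12+\kappa}$ to $\brho^{\f12+\kappa}$, but also produces a factor $\na$; since $Z_3=\brho\pa_3$, the $\pa_3$ that appears converts $\brho^{\f12+\kappa}\pa_3$ back into $\brho^{-\f12+\kappa}Z_3$, i.e.\ you return to the original negative weight with one extra conormal derivative attached. Iterating never removes the bad weight; it simply consumes the conormal budget and terminates with a residual of the form $\brho^{-\f12+\kappa}Z^{k}\pa_3 v$ that is still not in $X^{12}$, no matter how large $k$ is. The claim that $\kappa\in(0,\f1{16})$ is what makes the iteration close within twelve derivatives is therefore incorrect; that restriction comes from the exponent bookkeeping in Lemma \ref{lem: interpolation}, not from this step.

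What the paper actually does at this point (inequality \eqref{est: tri v 1}) is a single Hardy step followed by an \emph{interpolation} of $\|\brho^{\f12+\kappa}Z_h\na^2 v\|_{L^2}$ between the bad-weight quantity $\|\brho^{-\f12+\kappa}\tri v\|_{L^2}$ and the genuinely conormal quantity $\|\brho\tri v\|_{L^2}$ (note $\brho\pa_3^2 v\sim Z_3\pa_3 v$ is admissible), then a Young inequality to produce $\e\|\brho^{-\f12+\kappa}\tri v\|_{L^2}+C_\e\|\na v\|_{X^{12}}/\d^2$; the $\e$-term is absorbed back into the left-hand side of \eqref{est: tri v 2}. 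Without this interpolation-and-absorption mechanism the estimate does not close, and your proposal as written is missing it. Your final ``Young's inequality on the mixed linear-in-$\frak{D}(v)$ term'' is a different, later step and does not substitute for the absorption needed here.
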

\begin{proof}
Here we only need to control the term $\|\brho^{\kappa-\f12}\tri v\|_{L^2}$. To do that, we go back to the equation of $v$. Since
\begin{align*}
\brho^{-\f12+\kappa}\na_{J_0\cA_0}\cdot \mS_{\cA_0} v=&\brho^{-\f12+\kappa}\na_{J\cA}\cdot \mS_{\cA}v+\brho^{-\f12+\kappa}\big(\na_{J_0\cA_0}\cdot \mS_{\cA_0}  v-\na_{J\cA}\cdot \mS_{\cA}v\big)\\
=&\brho^{-\f12+\kappa}\Big(-\brho \pa_t v-\na_{J\cA}(J^{-2}\brho^2)   \Big)+\brho^{-\f12+\kappa}\big(\na_{J_0\cA_0}\cdot \mS_{\cA_0}  v-\na_{J\cA}\cdot \mS_{\cA}v\big),
\end{align*}
which implies that
\begin{align*}
\|\brho^{-\f12+\kappa}\na_{J_0\cA_0}\cdot \mS_{\cA_0} v\|_{L^2}\leq&\|\brho^{\f12}\pa_t v\|_{L^2}+\|\brho^{-\f12+\kappa}\na_{J\cA}(J^{-2}\brho^2)\|_{L^2}\\
&+\|\brho^{-\f12+\kappa}\na_{J_0\cA_0-J\cA}\cdot \mS_{\cA_0} v\|_{L^2}+\|\brho^{-\f12+\kappa}\na_{J\cA}\cdot \mS_{\cA_0-\cA}v\|_{L^2}\\
\triangleq&\|\brho^{\f12}\pa_t v\|_{L^2}+I_1+I_2+I_3.
\end{align*}

Owing to Lemma \ref{lem: cA,JcA}, we have
\begin{align*}
I_1 \leq C_0+t^{\f12}\mathcal{P}(\frak{C}).
\end{align*}

For $I_2$, by Lemma \ref{lem: hardy}, Lemma \ref{lem: cA,JcA} and \eqref{eq: JcA-I}-\eqref{est: JcA-I}, we have
\begin{align*}
I_2\leq& \|J_0\cA_0-J\cA\|_{L^\infty}\|\brho^{-\f12+\kappa}\na\cdot  \mS_{\cA_0}  v\|_{L^2}\\
\leq&  t^\f12(C_0+t^{\f12}\mathcal{P}(\frak{C}))(\|\brho^{-\f12+\kappa}\na v\|_{L^2}+\|\brho^{-\f12+\kappa}\na^2 v\|_{L^2})\\
\leq&  t^\f12(C_0+t^{\f12}\mathcal{P}(\frak{C}))(\|\brho^{-\f12+\kappa} \tri v\|_{L^2}+\|\brho^{-\f12+\kappa} Z_h \pa_3 v\|_{L^2}+\|\brho^{-\f12+\kappa} Z_h^2 v\|_{L^2}+\frak{D}(v))\\
\leq&
t^\f12(C_0+t^{\f12}\mathcal{P}(\frak{C}))(\|\brho^{-\f12+\kappa} \tri v\|_{L^2}+\|\brho^{\f12+\kappa} Z_h \tri v\|_{L^2}+\|\brho^{\f12+\kappa} Z_h^2 \na v\|_{L^2}+\frak{D}(v))\\
\leq&t^\f12(C+t^{\f12}\mathcal{P}(\frak{C}))\frak{D}(v).
\end{align*}
Similarly, by the fact that
\begin{align*}
\cA-\cA_0=(\cA J-\cA_0 J_0)J^{-1}+J^{-1}(J_0-J)\cA_0
\end{align*}
and
\begin{align*}
J-J_0=\int_0^t \pa_t Jds=\int_0^t J\na_{\cA}vds,
\end{align*}
combine \eqref{eq: JcA-I} with  Lemma \ref{lem:assumption} to get
\begin{align*}
I_3\leq t^\f12 (C+t^{\f12}\mathcal{P}(\frak{C}))\frak{D}(v).
\end{align*}

Collecting all above estimates  to obtain
\begin{align}\label{est: tri v}
\|\brho^{-\f12+\kappa}\na_{J_0\cA_0}\cdot \mS_{\cA_0} v\|_{L^2}\leq& \|\brho^{\f12}\pa_t v\|_{L^2}+(C+t^{\f12}\mathcal{P}(\frak{C}))(1+t^\f12\frak{D}(v)).
\end{align}

 Next, we give the relationship between $\tri v$ and $\na_{J_0\cA_0}\cdot \mS_{\cA_0} v$.  It is easy to find that
  \begin{align*}
 \na_{J_0\cA_0}\cdot \mS_{\cA_0} v
 =\left(\begin{array}{c}\mu J^{-1}_0\pa_3^2 v_1 \\
 \mu J^{-1}_0\pa_3^2 v_2\\
 (2\mu+\la) J^{-1}_0\pa_3^2 v_3
 \end{array}\right)
 +\mbox{some terms likes}~ Z\na v.
 \end{align*}

 %
%
% Set operator $T\triangleq -A_0K_0\pa_1-B_0K_0\pa_2+K_0\pa_3$ and function $g\triangleq \pa_1 v_1+\pa_2v_2+T v_3, $
%a direct calculation gives that
%\begin{align*}
%\na_{J_0\cA_0}\cdot \mD_{\cA_0} v=\left(\begin{array}{c}
%J_0(\pa_1^2+\pa_2^2+T^2)v_1+J_0\pa_1^2v_1+J_0\pa_1\pa_2 v_2+J_0T\pa_1 v_3 \\
% J_0(\pa_1^2+\pa_2^2+T^2)v_2+J_0\pa_1\pa_2v_1+J_0\pa_2^2 v_2+J_0T\pa_2 v_3\\
% J_0(\pa_1^2+\pa_2^2+T^2)v_3+J_0\pa_1Tv_1+J_0\pa_2 T v_2+J_0T^2 v_3
% \end{array}\right),
%\end{align*}
%and
%\begin{align*}
%\na_{J_0\cA_0}\cdot (\na_{\cA_0} \cdot v \mathbb{I})=\left(\begin{array}{c}
%J_0\pa_1 g \\
% J_0\pa_2 g \\
% J_0T g
% \end{array}\right).
% \end{align*}
% Then we have
% \begin{align*}
% \na_{J_0\cA_0}\cdot \mS_{\cA_0} v
% =\left(\begin{array}{c}\mu K_0\pa_3^2 v_1 \\ \mu K_0\pa_3^2 v_2\\
% (2\mu+\la) K_0\pa_3^2 v_3
% \end{array}\right)
% +\mbox{some terms likes}~ Z\na v.
% \end{align*}

By Lemma \ref{lem: hardy} and interpolation inequality, we have
\begin{align}\label{est: tri v 1}
\|\brho^{-\f12+\kappa} Z_h\na v\|_{L^2}\leq& C_0\|\brho^{\f12+\kappa} Z_h\na v\|_{L^2}+C_0\|\brho^{\f12+\kappa} Z_h\na^2 v\|_{L^2}\\
\nonumber
\leq& C_0\| \na v\|_{L^2_{x_3}(H^2_h)}+C_0\|\brho^{-\f12+\kappa} \tri v\|_{L^2}^\theta \|\brho \tri v\|_{L^2}^{1-\theta}\\
\nonumber
\leq& C_\e\| \na v\|_{L^2_{x_3}(H^2_h)}+\e \|\brho^{-\f12+\kappa} \tri v\|_{L^2},
\end{align}
where we use Young inequality  in the last step and $\theta\in(0,1).$

%By relation
% \begin{align}\label{assum: mu,la}
% \mu>0,~2\mu+3\la\geq 0,
% \end{align}
%we have
%\begin{align*}
%(\mu, 2\mu+\la)>0.
%\end{align*}

Taking $\e$ small enough and using \eqref{assum: energy3}, \eqref{est: tri v}, \eqref{est: tri v 1}, we have
\begin{align}\label{est: tri v 2}
\|\brho^{-\f12+\kappa} \tri v\|_{L^2}\leq \|\brho^{\f12}\pa_t v\|_{L^2}+(C+t^{\f12}\mathcal{P}(\frak{C}))(1+t^\f12\frak{D}(v))+\f{C_0}{\d^2}\|\na v\|_{X^{12}}.
\end{align}

Combining \eqref{est: tri v} and \eqref{est: tri v 2}, we obtain the desired results.

\end{proof}

\subsection{Proof of Proposition \ref{pro: main} }

Now, from Proposition \ref{pro2: v}--Proposition \ref{pro1: v }, we obtain that
\begin{equation*}
\begin{split}
&\Big(c_0-\d(C_0+t^{\f12}\mathcal{P}(\frak{C}))\Big)\Big(\sup_{\tau\in[0, t]} \mathcal{E}(\tau) + \int_{0}^t \mathcal{D}(\tau)\Big)\\
&\leq  \Big(c_0-\d(C_0+t^{\f12}\mathcal{P}(\frak{C}))\Big)  \mathcal{E}(0)+t^{\f12}\mathcal{P}(\frak{C})(1+ \sup_{\tau\in[0, t]} \mathcal{E}(\tau)).
\end{split}
\end{equation*}

Now, we give the estimates of $J$. By the definition of $J$, we have
\begin{align*}
J-J_0=\int_0^t \pa_t Jds=\int_0^t J\na_{\cA}vds,
\end{align*}
which implies that
\begin{align*}
|J-J_0|\leq \|J\|_{L^\infty}\|\cA\|_{L^\infty}\|\na v\|_{L^1_tL^\infty}\leq t^\f12(C+t^{\f12}\mathcal{P}(\frak{C})).
\end{align*}
Then by the Lemma \ref{lem: relation} and standard bootstrap argument, it implies the proposition \ref{pro: main} proved.

%%%%%%%%%%%%%%%%%%%%%%%%%%%%%%%%%%%%%%%%%%%%%%%%%%%%%%%%%%%%%%
%%%%%%%%%%%%%%%%%%%%%%%%%%%%%%%%%%%%%%%%%%%%%%%%%%%%%%%%%%%%%
\renewcommand{\theequation}{\thesection.\arabic{equation}}
\setcounter{equation}{0}

%%%%%%%%%%%%%%%%%%%%%%%%%%%%%%%%%%%%%%%%%%%%%%%%%%%%%%%%%%%%%%
%%%%%%%%%%%%%%%%%%%%%%%%%%%%%%%%%%%%%%%%%%%%%%%%%%%%%%%%%%%%%%%%%%

\section{Local well-posedness}
In this section, we will first give existence and uniqueness of strong solutions of system \eqref{eq:CNS1}, which is motivated by the method in \cite{GT2013}. First, we give some definitions of functional spaces. Given $T>0,$ let $\widetilde{Y}_T$ and ${Y}_T$ are defined by
\begin{equation*}
  \begin{split}
   &\widetilde{Y}_T\triangleq  C([0, T], X^{0}_{\f12})\cap\,L^2([0, T],\,H^1),\\
   &Y_T\triangleq \{v\in \widetilde{Y}_T\cap C([0, T], X^{12}_{\f12}\cap\,H^1) \big|\|v\|_{Y_T}<+\infty\},
  \end{split}
\end{equation*}
where
$\|v\|_{\widetilde{Y}_T}:=\sup_{t\in[0,T]}\|\brho^{\f12}v\|_{L^2}^2+\| v\|_{L^2_T H^1}^2$
and
$\|v\|_{Y_T}:=\sup_{t\in[0,T]}(\|v\|_{X^{12}_{\f12}}^2+\|\na v\|_{L^2}^2 )+\|\brho^{-\f12+\kappa}\tri v\|_{L^2_TL^2}^2  +\|\na v\|_{L_T^2X^{12}}^2+\|\brho^{\f12}\pa_t v\|_{L_T^2L^2}^2$.

Now, we define map $\Theta: Y_T\to Y_T$ as follows. For any given $\widetilde{v}\in Y_T,$ $v:=\Theta(\widetilde{v})$ is the solution of the following linear $\mathcal{A}$-equations:
\begin{align}\label{eq:LCNS}
\left\{
\begin{aligned}
&\brho\pa_t v+\na_{\widetilde{J}\widetilde{\cA}}((\widetilde{J})^{-2}\brho^2)-\na_{\widetilde{J}\widetilde{\cA}}\cdot \mS_{\widetilde{\cA}}(v)=0.\quad\mbox{in}\quad \Om,\\
&\mathrm{S}_{\widetilde{\cA}}(v)~ \widetilde{\mathcal{N}}=0,\quad\mbox{on}\quad \Gamma,\\
&v|_{x_3=0}=0,\\
&v|_{t=0}=v_0  \quad\mbox{in}\quad \Om.
\end{aligned}
\right.
\end{align}

\subsection{Existence and uniqueness of the strong solution to \eqref{eq:LCNS}.}

Our aim in this subsection is to construct strong solutions to linear $\mathcal{A}$-equations \eqref{eq:LCNS}.

\begin{lemma}\label{lem: weak}
Assume that $\brho^{\f12} v_0,\na v_0\in L^2$ and $\widetilde{v}\in Y_T,$ then there exists a positive time $T_1 \in (0, T]$ such that the system \eqref{eq:LCNS} has a unique strong solution $v$ with
\begin{align*}
&\brho^{\frac{1}{2}}\,v \in C([0,T_1],L^2),\,v\in C([0,T_1],H^1),\\
&\brho \pa_t v\in L^2(0,T_1; (H^1)^*),\, \brho^{\f12}\pa_t v\in L^2([0,T_1],L^2),\, \brho^{-\f12+\kappa}\tri v\in L^2([0,T_1],L^2).
\end{align*}
Moreover, the solution satisfies the following estimate
\begin{equation*}
  \begin{split}
   &\sup_{t\in[0,T_1]}(\|\brho^{\f12}v\|_{L^2}^2+\|\na v\|_{L^2}^2)+\| v\|_{L^2_{T_1} H^1}^2+\|\brho^{\f12}\pa_t v\|_{L^2_{T_1}L^2}^2\\
   &\qquad\qquad+\|\brho^{-\f12+\kappa}\tri v\|_{L^2_{T_1}L^2}^2+\|\brho \pa_t v\|_{L^2_{T_1}(H^1)^*}^2
\leq C_0\|\brho^{\f12}v_0\|_{L^2}^2+C_0\|\na v_0\|_{L^2}^2+C_0(1+T_1).
  \end{split}
\end{equation*}

\end{lemma}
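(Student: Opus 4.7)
The plan is to construct $v$ by a Galerkin approximation and derive uniform bounds by running the a priori arguments of Section 3 on the linear system with the \emph{given} coefficients $\widetilde{J},\widetilde{\cA},\widetilde{\mathcal{N}}$ built from $\widetilde{v}\in Y_T$. Since $\widetilde{v}\in Y_T$ and we may shrink to a short time $T_1\in(0,T]$, Lemmas \ref{lem: interpolation}--\ref{lem:assumption} apply with $\widetilde{v}$ in place of $v$ and yield $\sigma_0\leq\widetilde{J}\leq 4\sigma_0$ together with the $L^\infty$ and product bounds of Lemmas \ref{lem: cA,JcA}--\ref{lem:high nav cA}. Choose a countable basis $\{e_k\}$ of $\{f\in H^1(\Om):f|_{x_3=0}=0\}$ and solve the projected variational formulation of \eqref{eq:LCNS} on $\mathrm{span}(e_1,\dots,e_n)$; since the coefficients are continuous in time, the resulting linear ODE has a unique solution $v^{(n)}\in C^1([0,T_1])$, and the stress boundary condition on $\{x_3=1\}$ is imposed naturally through the weak formulation.

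Uniform estimates for $v^{(n)}$ are obtained by successively testing against $v^{(n)}$ and $\partial_t v^{(n)}$. The first test, combined with the Piola identity \eqref{Piola-identity} for $\widetilde{J}\widetilde{\cA}$, Korn's lemma (Lemma \ref{lem-korn-2}) and Lemma \ref{lem: equal na v}, gives
\begin{equation*}
\sup_{t\in[0,T_1]}\|\brho^{\tfrac12}v^{(n)}\|_{L^2}^2+\int_0^{T_1}\|v^{(n)}\|_{H^1}^2\,dt\leq C_0(\|\brho^{\tfrac12}v_0\|_{L^2}^2+1+T_1).
\end{equation*}
The second test reproduces Proposition \ref{pro2: v}, now with $\widetilde{J},\widetilde{\cA}$ (whose time derivatives are controlled in $L^\infty$ uniformly by $\|\widetilde{v}\|_{Y_T}$ and Lemma \ref{lem:assumption}), and yields
\begin{equation*}
\sup_{t\in[0,T_1]}\|\nabla v^{(n)}\|_{L^2}^2+\int_0^{T_1}\|\brho^{\tfrac12}\partial_t v^{(n)}\|_{L^2}^2\,dt\leq C_0(\|\nabla v_0\|_{L^2}^2+1+T_1).
\end{equation*}
Finally, rewriting the equation as $-\nabla_{\widetilde{J}\widetilde{\cA}}\cdot\mathbb{S}_{\widetilde{\cA}}(v^{(n)})=-\brho\partial_t v^{(n)}-\nabla_{\widetilde{J}\widetilde{\cA}}(\widetilde{J}^{-2}\brho^2)$, multiplying by $\brho^{-\frac12+\kappa}$ and reproducing the argument of Lemma \ref{lem: relation} produces $\|\brho^{-\frac12+\kappa}\triangle v^{(n)}\|_{L^2([0,T_1];L^2)}$ bounded by the same right-hand side. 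The bound $\brho\partial_t v^{(n)}\in L^2([0,T_1];(H^1)^*)$ follows by duality directly from the weak form.

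Passing to the limit is routine: weak-$\ast$ and weak convergence in the spaces of Step~2 together with Aubin--Lions compactness on $\brho^{\frac12}v^{(n)}$ are enough to identify the limit $v$ in the (\emph{linear}) weak formulation; the natural boundary condition $\mathbb{S}_{\widetilde{\cA}}(v)\widetilde{\mathcal{N}}=0$ is inherited in the limit. For uniqueness, the difference $w=v_1-v_2$ of two solutions with the same data satisfies the homogeneous version of \eqref{eq:LCNS} with $w|_{t=0}=0$, so the zeroth-order energy estimate gives $\brho^{\frac12}w\equiv 0$ and $\nabla w\equiv 0$ on $[0,T_1]$, hence $w\equiv 0$.

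The main obstacle will be the weighted second-order control $\|\brho^{-\frac12+\kappa}\triangle v^{(n)}\|_{L^2}$, as already in the a priori setting: the operator $\nabla_{\widetilde{J}\widetilde{\cA}}\cdot\mathbb{S}_{\widetilde{\cA}}$ must be inverted against the singular weight, which is only possible after comparing with the $t=0$ Lagrangian frame $J_0\cA_0$ (whose coefficients are \emph{independent} of $v^{(n)}$) and using Hardy's inequality (Lemma \ref{lem: hardy}) plus interpolation to absorb the tangential error terms. The comparison error is $\|\widetilde{J}\widetilde{\cA}-J_0\cA_0\|_{L^\infty}\lesssim T_1^{1/2}\mathcal{P}(\|\widetilde{v}\|_{Y_T})$ by the argument of Lemma \ref{lem: cA,JcA}, so shrinking $T_1$ allows the error to be absorbed into the left-hand side exactly as in Lemma \ref{lem: relation}.
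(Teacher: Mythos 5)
Your overall blueprint --- Galerkin approximation, energy estimates by testing against $v^{(n)}$ and $\pa_t v^{(n)}$, the dual bound for $\brho\pa_t v$ in $(H^1)^*$, weak limits combined with Aubin--Lions, and uniqueness from the $L^2$-energy identity --- reproduces the paper's four-step scheme faithfully, and the observation that $T_1$ must be shrunk so that $\|\widetilde{J}\widetilde{\cA}-J_0\cA_0\|_{L^\infty}$ is small is exactly how the paper transfers the coercivity and elliptic structure from the $t=0$ frame.

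There is, however, a genuine gap in your treatment of the weighted second-order regularity. You propose to obtain $\|\brho^{-\f12+\kappa}\tri v^{(n)}\|_{L^2([0,T_1];L^2)}$ at the Galerkin level by ``rewriting the equation'' as $-\na_{\widetilde{J}\widetilde{\cA}}\cdot\mS_{\widetilde{\cA}}(v^{(n)})=-\brho\pa_t v^{(n)}-\na_{\widetilde{J}\widetilde{\cA}}(\widetilde{J}^{-2}\brho^2)$ and then multiplying by $\brho^{-\f12+\kappa}$. But the Galerkin approximant $v^{(n)}$ does not satisfy this identity pointwise: by construction the residual is only orthogonal to $\mathrm{span}(e_1,\dots,e_n)$, and neither $\brho^{-\f12+\kappa}$ nor $\brho^{-1+2\kappa}\tri v^{(n)}$ is a legal test function in that finite-dimensional subspace. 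Hence the ``rewriting'' step, and the subsequent Hardy-plus-interpolation argument of Lemma~\ref{lem: relation}, are not available at the approximation level, and the claimed uniform $\tri v^{(n)}$-bound does not follow. The paper avoids this by postponing second-order regularity to a separate step: first pass to a weak limit $v$ using only the lower-order uniform bounds (all of which \emph{are} obtained at the Galerkin level and carry over by lower semicontinuity and Aubin--Lions), and then observe that for a.e.\ $t$ the limit $v(t)$ is a weak solution of the elliptic boundary-value problem
\[
\int_\Om \mS_{\widetilde{\cA}}v:\na_{\widetilde{J}\widetilde{\cA}}\phi\,dx
=\int_\Om\big(\na_{\widetilde{J}\widetilde{\cA}}(\brho^2\widetilde{J}^{-2})-\brho\pa_t v\big)\,\phi\,dx,\qquad \phi\in H^1,
\]
whose right-hand side satisfies $\brho^{-\f12}\big(\na_{\widetilde{J}\widetilde{\cA}}(\brho^2\widetilde{J}^{-2})-\brho\pa_t v\big)\in L^2$ by the lower-order estimates; then elliptic regularity theory gives $\brho^{-\f12+\kappa}\tri v\in L^2_{T_1}L^2$. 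You should restructure your proof in this order; with that correction the remainder of your argument matches the paper's.
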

\begin{proof} We split the proof of the lemma into four steps.

{\it{Step 1: Galerkin approximation.}}
We first use Galerkin method to construct approximate solutions of the system \eqref{eq:LCNS}. Let $\{w_k\}_{k=1}^\infty$ are orthonormal basis of $H^1(\Om)$ which satisfy boundary condition  $\mathrm{S}_{\widetilde{\cA}}(w_k)~ \widetilde{\mathcal{N}}|_{x_3=1}=0$ and $w_k|_{x_3=0}=0$  and set approximate solution with the form
\begin{align*}
v^m(t,x):=\sum_{k=1}^m d^m_k(t)w_k(x), \quad d^m_k(t)\quad\text{will be determined later on},
\end{align*}
which solves the linear system
\begin{align}\label{eq:LCNS-appr}
\left\{
\begin{aligned}
&\brho\pa_t v^m+\na_{\widetilde{J}\widetilde{\cA}}((\widetilde{J})^{-2}\brho^2)-\na_{\widetilde{J}\widetilde{\cA}}\cdot \mS_{\widetilde{\cA}}(v^m)=0.\quad\mbox{in}\quad \Om,\\
&\mathrm{S}_{\widetilde{\cA}}(v^m)~ \widetilde{\mathcal{N}}=0,\quad\mbox{on}\quad \Gamma,\\
&v^m|_{x_3=0}=0,\\
&v^m|_{t=0}=v_0^m=\sum_{k=1}^m d^m_k(0)w_k(x)  \quad\mbox{in}\quad \Om
\end{aligned}
\right.
\end{align}
in the sense of the distribution, where $d^m_k(0)=\int_{\Om} v_0 w_k$ for $k=1, ..., m$.

Taking the test function $\phi=w_\ell$, $\ell=1,\cdots, m$, from the weak formula of the system \eqref{eq:LCNS-appr}, we obtain the following ordinary differential equations
\begin{equation}\label{eq:ODE}
\begin{cases}
&\sum_{k=1}^m\int_{\Om}\brho w_k w_\ell dx~ d^m_k(t)'+\sum_{k=1}^m\int_\Om \mS_{\widetilde{\cA}}w_k:\na_{\widetilde{J}\widetilde{\cA}}w_\ell dx~d^m_k(t)=\int_\Om \brho^2\widetilde{J}^{-2}\na_{\widetilde{J}\widetilde{\cA}}\cdot w_\ell dx,\\
&d^m_k(0)=\int_{\Om} v_0 w_k.
\end{cases}
\end{equation}
Notice that the matrix $\Big(\int_{\Om}\brho w_k w_\ell dx \Big)_{m\times m}$ is invertible for any $m\geq1$, and the coefficient $\int_\Om \mS_{\widetilde{\cA}}w_k:\na_{\widetilde{J}\widetilde{\cA}}w_\ell dx$ (in front of $d^m_k(t)$) is continuous in terms of $t\in[0,T]$ because of $\widetilde{v}\in Y_T$, we know that \eqref{eq:ODE} is a non-generate linear ODE system with continuous coefficients. Due to the classical theory of ODE, we find  solutions $d^m_k(t)\in C^1([0,T]), k=1,\cdots, m$, which means approximate solutions $v^m(t,x)$ exist and belong to the space $C^1([0,T],H^1(\Om)).$
\medskip

{\it{Step 2: Uniform estimates for $v^m.$}}
 Multiplying $d^m_\ell(t)$ on the both sides of \eqref{eq:ODE} and taking the summation in terms of $\ell=1,\cdots, m$, one has
\begin{align*}
\int_{\Om}\brho\pa_t v^m \cdot \,v^m+\int_{\Om}\mS_{\widetilde{\cA}}v^m:\na_{\widetilde{J}\widetilde{\cA}}v^m dx=\int_\Om \brho^2\widetilde{J}^{-2}\na_{\widetilde{J}\widetilde{\cA}}\cdot v^m dx.
\end{align*}
Then Lemma \ref{lem: cA,JcA} and Lemma \ref{Lem: H^1} give that
\begin{align}\label{est: v^m}
\f12\f{d}{dt}\|\brho^{\f12}v^m\|_{L^2}^2+c_0\| v^m\|_{H^1}^2\leq& \int_\Om \brho^2\widetilde{J}^{-2}\na_{\widetilde{J}\widetilde{\cA}}\cdot v^m dx+C_0\|\brho^{\f12}v^m\|_{L^2}^2\\
\nonumber
\leq& C_0\|\na v^m\|_{L^2}+C_0\|\brho^{\f12}v^m\|_{L^2}^2,
\end{align}
for $t$ small enough.

By Gronwall's inequality, we know there exists $T_1>0$ independent of $m$ such that
\begin{align}\label{est: low 1}
\sup_{t\in[0,T_1]}\|\brho^{\f12}v^m\|_{L^2}^2+\int_0^{T_1}\| v^m\|_{H^1}^2ds\leq C_0\|\brho^{\f12}v^m_0\|_{L^2}^2+C_0T_1.
\end{align}

For any test function $\phi \in  C([0, T], H^1)$ with $\phi|_{x_3=0}=0$ and $\|\phi\|_{L^2_TH^1}\leq1,$ owing to the weak formula of the system \eqref{eq:LCNS-appr}, we deduce from \eqref{est: low 1} that
\begin{align*}
|\int_0^{T_1}\langle\brho\pa_t v^m, \,\phi \,\rangle\,ds| &=|-\int_0^{T_1}\int_{\Om}\mS_{\widetilde{\cA}}v^m:\na_{\widetilde{J}\widetilde{\cA}}\phi  \,dxds+\int_0^{T_1}\int_\Om \brho^2\widetilde{J}^{-2}\na_{\widetilde{J}\widetilde{\cA}}\cdot \phi \,dxds|\\
&\leq\,(C_0+C_0\|\na v^m\|_{L^2_{T_1}L^2})\|\phi\|_{L^2_TH^1}\leq (C_0(1+T_1^\f12)+C_0\|\brho^{\f12}v^m_0\|_{L^2})\|\phi\|_{L^2_TH^1},
\end{align*}
which follows from the dual argument that
\begin{align}\label{est: dual}
\|\brho \pa_t v^m\|_{L^2_{T_1}(H^1)^*}\leq C_0(1+T_1^\f12)+C_0\|\brho^{\f12}v^m_0\|_{L^2}.
\end{align}

 Multiplying $d^m_\ell(t)'$ on the both sides of \eqref{eq:ODE} and taking the summation in terms of $\ell=1,\cdots, m$, we have
\begin{align*}
\int_{\Om}\brho|\pa_t v^m|^2+\int_{\Om}\mS_{\widetilde{\cA}}v^m:\na_{\widetilde{J}\widetilde{\cA}}\pa_tv^m dx=\int_\Om \brho^2\widetilde{J}^{-2}\na_{\widetilde{J}\widetilde{\cA}}\cdot \pa_tv^m dx.
\end{align*}
Similar estimate in Proposition \ref{pro2: v} implies that
\begin{align*}
\f12\f{d}{dt}&\int_{\Om}\widetilde{J}\mathbb{S}_{\widetilde{\cA}} v^m: \na_{\widetilde{\cA}}v^mdx+\|\brho^{\f12}\pa_t v^m\|_{L^2}^2\\
\leq& \Big| \f12\int_{\Om}\mathbb{S}_{\widetilde{\cA}} v^m: \na_{\widetilde{\cA}}v^m\pa_t \widetilde{J}dx\Big| +\Big| \int_{\Om}\widetilde{J}\mS_{\widetilde{\cA}}v^m:\na_{\pa_t{\widetilde{\cA}}} v^mdx \Big|+\Big|\int_\Om \brho^2\widetilde{J}^{-2}\na_{\widetilde{J}\widetilde{\cA}}\cdot \pa_tv^m dx\Big|.
\end{align*}
 Since
 $\widetilde{v}\in Y_T$, we infer that
\begin{align*}
 \Big| \f12\int_{\Om}\mathbb{S}_{\widetilde{\cA}} v^m: \na_{\widetilde{\cA}}v^m\pa_t \widetilde{J}dx\Big| +\Big| \int_{\Om}\widetilde{J}\mS_{\widetilde{\cA}}v^m:\na_{\pa_t{\widetilde{\cA}}} v^mdx \Big|\leq C\|\na  v^m\|_{L^2}^2\frak{D}(\widetilde{v})\end{align*}
and
\begin{align*}
\Big|\int_\Om \brho^2\widetilde{J}^{-2}\na_{\widetilde{J}\widetilde{\cA}}\cdot \pa_tv^m dx\Big|\leq C_0\|\brho^{\f12}\pa_t v^m\|_{L^2}.
\end{align*}
As a result, we get
\begin{align*}
\f12\f{d}{dt}\int_{\Om}\widetilde{J}\mathbb{S}_{\widetilde{\cA}} v^m: \na_{\widetilde{\cA}}v^mdx+\|\brho^{\f12}\pa_t v^m\|_{L^2}^2\leq& C\|\na  v^m\|_{L^2}^2\frak{D}(\widetilde{v})+C_0\|\brho^{\f12}\pa_t v^m\|_{L^2}.
\end{align*}
Integrating time from $0$ to $T_1$ and using $\widetilde{v}\in Y_T$ and Lemma \ref{lem: relation}, we obtain
\begin{equation*}
  \begin{split}
&\sup_{t\in[0,T_1]}\|\na v^m\|_{L^2}^2+\|\brho^{\f12}\pa_t v^m\|_{L^2_{T_1}L^2}^2\\
&\leq C_0\|\na v_0^m\|_{L^2}^2+C_0T_1^\f12\sup_{t\in[0,T_1]}\|\na v^m\|_{L^2}^2(\int_0^{T_1}\frak{D}(\widetilde{v})^2ds)^{\f12}+C_0T_1.
  \end{split}
\end{equation*}
Taking $T_1$ small enough such that the second term on the right hand side absorbed by the left hand side, we obtain
\begin{align}\label{est: low2}
\sup_{t\in[0,T_1]}&\|\na v^m\|_{L^2}^2+c_0\|\brho^{\f12}\pa_t v^m\|_{L^2_{T_1}L^2}^2\leq 2C_0\|\na v_0^m\|_{L^2}^2+C_0T_1.
\end{align}
Combining estimate \eqref{est: low 1}, \eqref{est: dual} and \eqref{est: low2} together, there holds that
\begin{align}\label{est: low3}
\sup_{t\in[0,T_1]}&(\|\brho^{\f12}v^m\|_{L^2}^2+\|\na v^m\|_{L^2}^2)+\| v^m\|_{L_{T_1}^2 H^1}^2+\|\brho^{\f12}\pa_t v^m\|_{L^2_{T_1}L^2}^2+\|\brho \pa_t v^m\|_{L^2_{T_1}(H^1)^*}^2\\
\nonumber
\leq& 2C_0\|\brho^{\f12}v^m_0\|_{L^2}^2+2C_0\|\na v_0^m\|_{L^2}^2+C_0(1+T_1).
\end{align}
{\it{Step 3: Passing to the limit.}}
Since $$\sup_{t\in[0,T_1]}(\|\brho^{\f12}v^m\|_{L^2}^2+\|\na v^m\|_{L^2}^2)+\| v^m\|_{L^2_{T_1} H^1}^2+\|\rho^{\f12}\pa_t v^m\|_{L^2_{T_1}L^2}^2+\|\brho \pa_t v^m\|_{L^2_{T_1}(H^1)^*}^2$$ is uniformly bounded, up to the extraction of a subsequence, we know as $m\to \infty$
\begin{align}\label{weak-limit-1}
\left\{
\begin{aligned}
&\brho^{\f12}v^m\rightharpoonup^* \brho^{\f12}v\quad\mbox{in $L^\infty_{T_1} L^2$},\\
&\na v^m\rightharpoonup^*\na v\quad\mbox{in $L^\infty_{T_1} L^2$},\\
&\brho \pa_t v^m \rightharpoonup \brho\partial _t v \quad\mbox{in $L^2_{T_1} (H^1)^*$},\\
%&\rho^{\f12}\pa_t v^m\rightharpoonup \rho^{\f12}\pa_t v \quad\mbox{in $L^2_T L^2$},\\
& v^m \rightharpoonup v \quad\mbox{in $L^2_{T_1} H^1$}.
\end{aligned}
\right.
\end{align}
By lower semicontinuity and energy estimate \eqref{est: low3}, we use the fact $\|v^m(0)-v_0\|_{L^2(\Om)}\to 0$ as $m\to \infty$ to infer that
\begin{equation}\label{eneygy-H1-1}
\begin{split}
\sup_{t\in[0,T_1]}&(\|\brho^{\f12}v\|_{L^2}^2+\|\na v\|_{L^2}^2)+\| v\|_{L^2_{T_1} H^1}^2+\|\rho^{\f12}\pa_t v\|_{L^2_{T_1}L^2}+\|\brho \pa_t v\|_{L^2_{T_1}(H^1)^*}^2\\
\leq&  4C_0\|\brho^{\f12}v_0\|_{L^2}^2+4C_0\|\na v_0\|_{L^2}^2+C_0(1+T_1),
\end{split}
\end{equation}
and $v$ is a weak solution to the linear $\mathcal{A}$-equations \eqref{eq:LCNS}. Moreover, according to \eqref{eneygy-H1-1},  we may obtain from Aubin-Lions's lemma \cite{Simon1990} that $v \in C([0, T_1], X^0_{\frac{1}{2}} \cap\, H^1)$.

{\it{Step 4: The strong solution.}}
Now, we prove the above weak solution $v$ is a strong one. In fact, for a.e $t\in[0,T],$ $v(t)$ is a weak solution to the elliptic system in the sense of
\begin{align}
\int_\Om \mS_{\widetilde{\cA}}v:\na_{\widetilde{J}\widetilde{\cA}}\phi dx=\int_\Om \Big(\na_{\widetilde{J}\widetilde{\cA}}(\brho^2\widetilde{J}^{-2})-\brho\pa_t v\Big) \phi dx
\end{align}
for $\phi\in H^1. $ Since $\brho^{-\f12}\Big(\na_{\widetilde{J}\widetilde{\cA}}(\brho^2\widetilde{J}^{-2})-\brho\pa_t v\Big)\in L^2$ for a.e $t\in[0,T],$ by elliptic regularity theory, we know this system admires a strong solution $v$ solving \eqref{eq:LCNS}  with $\rho^{-\f12+\kappa}\tri v\in L^2([0,T],L^2)$. The uniqueness comes from energy estimates with zero initial data.
\end{proof}

\subsection{High regularity of $v$}
In this subsection, we prove when $\widetilde{v}\in Y_T,$ so does $v:=\Theta (\widetilde{v}).$ It is mainly based on the priori estimates in Section 3.
\begin{lemma}\label{lem: regular}
Assume that $v$ is a strong solution obtained in Lemma \ref{lem: weak} and $\widetilde{v}\in Y_{T}$ with  initial data $v_0\in Y_0$, then we have $v\in Y_{T_1}$ and satisfies
\begin{align*}
\|v\|_{Y_{T_1}}\leq CT_1+C_0\|v_0\|_{Y_0},
\end{align*}
where the constant $C$ depends on $\|\widetilde{v}\|_{Y_T}$.

\end{lemma}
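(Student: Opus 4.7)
The plan is to mirror, step by step, the a priori analysis of Section 3 (Propositions \ref{basic energy: v}, \ref{pro2: v}, \ref{pro1: v } and Lemma \ref{lem: relation}) applied now to the linearized system \eqref{eq:LCNS}, where the coefficients $\widetilde{J}\widetilde{\cA}$, $\mathrm{S}_{\widetilde{\cA}}$ and $\widetilde{\mathcal{N}}$ are built from the given $\widetilde{v}\in Y_T$. Because $\widetilde{v}$ is fixed, all preliminary bounds of Section 2 (in particular Lemmas \ref{lem: cA,JcA}–\ref{lem:high nav cA}) applied with $\frak{C}$ replaced by $\|\widetilde{v}\|_{Y_T}$ furnish polynomial control of $\widetilde{J}\widetilde{\cA}$, $\widetilde{\cA}$, $\partial_t\widetilde{\cA}$, $\widetilde{J}^{-1}$ in the relevant conormal norms. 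Shrinking $T_1$ a bit further (depending only on $\sigma_0$ and $\|\widetilde{v}\|_{Y_T}$) I will also arrange $\sigma_0\le \widetilde{J}\le 4\sigma_0$ and the Korn-type coercivity of $\mathbb{S}_{\widetilde{\cA}}$ furnished by Lemmas \ref{Lem: H^1}--\ref{lem: equal na v}. These are precisely the standing assumptions used throughout Section 3, so the estimates will transfer.

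First, I would run the tangential energy estimate: act with $Z^m$, $|m|\le 12$, on the first equation of \eqref{eq:LCNS}, pair with $\delta^{2|m|}Z^m v$, and sum. Integration by parts, the Piola identity \eqref{Piola-identity} together with the boundary condition $\mathrm{S}_{\widetilde{\cA}}(v)\widetilde{\mathcal{N}}=0$, and the fact that $\bar\rho$ vanishes on $\{x_3=1\}$ cancel the boundary contributions exactly as in Proposition \ref{pro1: v }. The commutators $[Z^m,\mathrm{S}_{\widetilde{\cA}}]$, $[Z^m,\nabla_{\widetilde{J}\widetilde{\cA}}]$ and $[\bar\rho,Z^m]\partial_t v$ are bounded by the products estimates of Lemma \ref{lem: product2} using the $\widetilde{v}$-versions of Lemmas \ref{lem: cA,JcA}--\ref{lem:high nav cA}. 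The outcome is the linearized analogue
\begin{equation*}
\frac{d}{dt}\|v\|_{X^{12}_{1/2}}^2+\bigl(c_0-\delta(C_0+T_1^{1/2}\mathcal{P}(\|\widetilde{v}\|_{Y_T}))\bigr)\|\nabla v\|_{X^{12}}^2\le C_0\|v\|_{X^{12}_{1/2}}^2+C_0+T_1^{1/2}\mathcal{P}(\|\widetilde{v}\|_{Y_T}),
\end{equation*}
which for $\delta$ small and $T_1$ small (both depending on $\|\widetilde{v}\|_{Y_T}$) absorbs the bad part of the dissipation and, upon Gronwall, gives $\sup_{[0,T_1]}\|v\|_{X^{12}_{1/2}}^2+\|\nabla v\|_{L^2_{T_1}X^{12}}^2\le C_0\|v_0\|_{X^{12}_{1/2}}^2+CT_1$.

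Next I would reproduce the first-order-in-time estimate from Proposition \ref{pro2: v} by testing the momentum equation against $\partial_t v$; the same integration by parts and symmetry computation as there produces
\begin{equation*}
\frac{1}{2}\frac{d}{dt}\int_\Omega \widetilde{J}\,\mathbb{S}_{\widetilde{\cA}}(v):\nabla_{\widetilde{\cA}}v\,dx+\|\bar\rho^{1/2}\partial_t v\|_{L^2}^2\le (C_0+T_1^{1/2}\mathcal{P}(\|\widetilde{v}\|_{Y_T}))\bigl(\frak{D}(\widetilde{v})\|\nabla v\|_{L^2}^2+1\bigr),
\end{equation*}
and integrating in time together with the bound already obtained on $\|\nabla v\|_{L^2_{T_1}X^{12}}$ controls $\|\bar\rho^{1/2}\partial_t v\|_{L^2_{T_1}L^2}^2$ and $\|\nabla v\|_{L^\infty_{T_1}L^2}^2$. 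Finally, the estimate on $\|\bar\rho^{-1/2+\kappa}\Delta v\|_{L^2_{T_1}L^2}$ follows exactly as in Lemma \ref{lem: relation}: comparing $\nabla_{\widetilde{J}_0\widetilde{\cA}_0}\cdot\mathbb{S}_{\widetilde{\cA}_0}(v)$ with $\nabla_{\widetilde{J}\widetilde{\cA}}\cdot\mathbb{S}_{\widetilde{\cA}}(v)=\bar\rho\partial_t v+\nabla_{\widetilde{J}\widetilde{\cA}}(\widetilde{J}^{-2}\bar\rho^2)$ and using $\widetilde{J}_0\widetilde{\cA}_0-\widetilde{J}\widetilde{\cA}\sim (\int_0^t\nabla\widetilde{v}\,ds)^2$, the flat-Laplacian structure $\nabla_{\widetilde{J}_0\widetilde{\cA}_0}\cdot\mathbb{S}_{\widetilde{\cA}_0}(v)=\mu\widetilde{J}_0^{-1}\partial_3^2 v+\mbox{lower tangential terms}$ recovers $\bar\rho^{-1/2+\kappa}\Delta v$ from $\bar\rho^{1/2}\partial_t v$ plus the already-controlled tangential norms.

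The main obstacle is the standard one in such linearization arguments: closing the highest-order tangential estimate requires the bad top-order term to be genuinely absorbed by the coercive dissipation, which forces a simultaneous smallness constraint on $\delta$ and $T_1$ that must depend \emph{only} on $\|\widetilde{v}\|_{Y_T}$, $\sigma_0$, and the initial data, never on $v$ itself. This is possible because, unlike Proposition \ref{pro: main} where one had to bootstrap on $v$, here the coefficient polynomials $\mathcal{P}(\|\widetilde{v}\|_{Y_T})$ are a priori finite; the computation therefore yields a genuine linear estimate $\|v\|_{Y_{T_1}}\le CT_1+C_0\|v_0\|_{Y_0}$ with $C$ depending on $\|\widetilde{v}\|_{Y_T}$, as claimed.
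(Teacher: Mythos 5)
Your proposal is correct and follows essentially the same route as the paper: the paper's own proof simply says to run the energy estimates of Propositions \ref{pro2: v} and \ref{pro1: v } (and implicitly Lemma \ref{lem: relation}) with $\widetilde{\cA},\widetilde{J}$ in place of $\cA,J$, observing that the linearity in $v$ turns the bootstrap into a genuine closed estimate with constants depending only on $\|\widetilde{v}\|_{Y_T}$. The only cosmetic difference is that the paper performs these estimates at the level of the Galerkin approximations $v^m$ from Lemma \ref{lem: weak} and then passes to the limit, whereas you argue directly on $v$; this is a standard technicality and does not affect the substance of the argument.
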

\begin{proof}
 We take  $\widetilde{\cA},\widetilde{J}$ instead of $\cA, J$ respectively in those estimates in Proposition \ref{pro2: v} and Proposition \ref{pro1: v }. System \eqref{eq:LCNS} is a linear system due to $\widetilde{\cA},\widetilde{J}$ are regarded as known quantities, so for small $T_1>0$, it is easy to arrive at the following estimate:
\begin{align*}
\|v^m\|_{Y_{T_1}}\leq CT_1+C_0\|v^m_0\|_{Y_0}.
\end{align*}
Passing to the limit, we get the desired results.

 \end{proof}

 \begin{remark}
By Lemma \ref{lem: regular}, we know that $\Theta : Y_{T_1}\to Y_{T_1} $ is well-defined.
\end{remark}

\subsection{Contraction }
By Lemma \ref{lem: weak} and Lemma \ref{lem: regular},  we know that if $\widetilde{v} \in Y_T$ with $T>0$ sufficiently small , we can find a unique strong solution of equation \eqref{eq:LCNS} with regular $v=\Theta(\widetilde{v})\in Y_T.$ In order to construct the solution to \eqref{eq:CNS1}, we need to construct approximate solutions. The approximate solutions $\{\xi^{(n)},\,v^{(n)}\}_{n=1}^{\infty}$ we defined are iterated as follows:
\begin{align}\label{eq: v^(n)}
\left\{
\begin{aligned}
&\partial_t\xi^{(n)}=v^{(n)}\quad\mbox{in}\quad \Om,\\
&\brho\pa_t v^{(n)}+\na_{J^{(n-1)}\cA^{(n-1)}}((J^{(n-1)})^{-2}\brho^2)-\na_{J^{(n-1)}\cA^{(n-1)}}\cdot \mS_{\cA^{(n-1)}}v^{(n)}=0\quad\mbox{in}\quad \Om,\\
&\mathrm{S}_{\cA^{(n-1)}}v^{(n)}~ N^{(n-1)}=0,\quad\mbox{on}\quad \Gamma,\\
&v^{(n)}|_{x_3=0}=0,\\
&(\xi^{(n)},\,v^{(n)}|_{t=0}=(\xi_0,\,v_0)  \quad\mbox{in}\quad \Om.
\end{aligned}
\right.
\end{align}
 with $\{\xi^{(1)},\,v^{(1)}\}$ be the solution of linear  equation
\begin{align}\label{eq: v^(1)}
\left\{
\begin{aligned}
&\partial_t\xi^{(1)}=v^{(1)}\quad\mbox{in}\quad \Om,\\
&\brho\pa_t v^{(1)}+\na_{J_0\cA_0} (\brho^2J_0^{-1})-\na_{J_0\cA_0}\cdot \mS_{\cA_0} v^{(1)}=0\quad\mbox{in}\quad \Om,\\
&\mathrm{S}_{\cA_0} v^{(1)}~ \mathcal{N}_0=0,\quad\mbox{on}\quad \Gamma,\\
&v^{(1)}|_{x_3=0}=0,\\
&(\xi^{(1)},\,v^{(1)}|_{t=0}=(\xi_0,\,v_0)  \quad\mbox{in}\quad \Om,
\end{aligned}
\right.
\end{align}
where $\cA_0,J_0$ are given by $\eta_0(x)= x+\xi_0(x)$ and $\mathcal{N}_0=\pa_1\eta_0\times \pa_2\eta_0$ on $\{x_3=1\}.$
Since \eqref{eq: v^(n)} is a decouple linear system in terms of $\xi^{(n)}$ and $v^{(n)}$, we need only to solve first $v^{(n)}$ then $\xi^{(n)}$ according to the first equation in \eqref{eq: v^(n)}. Notice that \eqref{eq: v^(1)} is linear, the assumption on initial data $
\|v_0\|_{Y_0}^2:=\|v_0\|_{X^{12}_{\f12}}^2+\|\na v_0\|_{L^2}^2 \leq \f{M}{2C_0}$ guarantees that $v^{(1)}\in Y_T$ with bound $\|v^{(1)}\|_{Y_T}^2\leq M.$ By Lemma \ref{lem: regular}, we obtain $\{v^{(n)}\}_{n=1}^{\infty}\subset Y_T$ for any $n\geq1.$

%The bound $\|v^{(n)}\|_{Y_T}^2\leq M$ allows us to extract weakly converging subsequences from the sequence $\{v^{(n)}\}_{n=1}^{\infty}$. But given such a convergent subsequence $\{v^{(n_k)}\}_{k=1}^{\infty},$ we cannot guarantee that $\{v^{(n_k-1)}\}_{k=1}^{\infty}$ converges to the same limit. This prevents us from simple passing to the limit in \eqref{eq: v^(n)} in order to produce the desired solution to \eqref{eq:CNS1}. It leads to study the strong convergence of this sequence, and in particular to consider its contraction in some norm.
%
%Now, we define the norm in which we will show the sequence contracts. For $T>0,$ we define
%\begin{align*}
%\|v\|_{\widetilde{Y}_T}^2\triangleq \sup_{t\in[0,T]}&\|\brho^{\f12}v\|_{L^2}^2+\| v\|_{L^2_TH^1}^2.
%\end{align*}
Next, our goal in this subsection is to prove sequence $\{v^{(n)}\}_{n=1}^{\infty}$  is contracted under norm $\widetilde{Y}_T$.

% Once it is done, by fixed point theorem, there exists $v\in \widetilde{Y}_T$ such that $\Theta(v)=v.$ By the uniqueness of the limit, the sequences  $\{v^{(n)}\}_{n=1}^{\infty}$ themselves  must converge to  $v\in Y_T$ which exists from contraction and satisfies equation \eqref{eq:CNS1}.

 First of all, we deduce $\sigma(v^{(n)})\triangleq v^{(n+1)}-v^{(n)}$ satisfies the following equation
\begin{align}\label{eq: v^(n+1)-v^n}
\left\{
\begin{aligned}
\brho\pa_t& \sigma(v^{(n)})-\Big(\na_{J^{(n)}\cA^{(n)}}\cdot \mS_{\cA^{(n)}}v^{(n+1)}-\na_{J^{(n-1)}\cA^{(n-1)}}\cdot \mS_{\cA^{(n-1)}}v^{(n)}\Big)\\
&+\Big( \na_{J^{(n)}\cA^{(n)}}\big((J^{(n)})^{-2}\brho^2\big)-\na_{J^{(n-1)}\cA^{(n-1)}}\big((J^{(n-1)})^{-2}\brho^2\big)\Big)
=0\quad\mbox{in}\quad \Om,\\
&\mathrm{S}_{\cA^{(n)}}v^{(n+1)}~ \mathcal{N}^{(n)}-\mathrm{S}_{\cA^{(n-1)}}v^{(n)}~ \mathcal{N}^{(n-1)}=0,\quad\mbox{on}\quad \Gamma,\\
&\sigma(v^{(n)})|_{x_3=0}=0,\\
&\sigma(v^{(n)})|_{t=0}=v^{(n+1)}_0-v^{(n)}_0 =0\quad\mbox{in}\quad \Om.
\end{aligned}
\right.
\end{align}

\begin{lemma}\label{lem1: v^(n+1)-v^n}
Assume that $\{v^{(n)}\}_{n=1}^{\infty}$ be the solutions of equation \eqref{eq: v^(n)} with bound $\|v^{(n)}\|_{Y_T}^2\leq M$ for each $n\geq1.$ It holds that
\begin{align*}
\f{d}{dt}&\|\brho^{\f12}\sigma(v^{(n)})\|_{L^2}^2+\|\sigma(v^{(n)})\|_{H^1}^2
\leq Ct\|\sigma(v^{(n-1)})\|_{L_t^2L^2}^2(1+\frak{ D}(\sigma(v^{(n)}))^2).
\end{align*}
Moreover, taking $T$ small enough, the sequence  $v^{(n)}$ is a Cauchy sequence in the space $\widetilde{Y}_T$.
%\begin{align*}
%\|v^{(n+1)}-v^{(n)}\|_{\widetilde{Y}_T}^2\leq \f12 \|v^{(n)}-v^{(n-1)}\|_{\widetilde{Y}_T}^2.
%\end{align*}

\end{lemma}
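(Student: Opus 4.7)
The plan is a standard $L^2$ energy estimate on the difference equation \eqref{eq: v^(n+1)-v^n}, organized exactly like the zeroth-order estimate of Proposition \ref{basic energy: v} but with extra ``mismatch'' terms generated by the coefficients living at levels $n$ and $n-1$. I would test the momentum equation in \eqref{eq: v^(n+1)-v^n} against $\sigma(v^{(n)})$, integrate over $\Omega$, and split the viscous and pressure differences as
\begin{align*}
&\na_{J^{(n)}\cA^{(n)}}\!\cdot\!\mS_{\cA^{(n)}} v^{(n+1)} - \na_{J^{(n-1)}\cA^{(n-1)}}\!\cdot\!\mS_{\cA^{(n-1)}} v^{(n)} = \na_{J^{(n)}\cA^{(n)}}\!\cdot\!\mS_{\cA^{(n)}}\sigma(v^{(n)}) + \cR_n^{\mathrm{visc}},\\
&\na_{J^{(n)}\cA^{(n)}}\!\big((J^{(n)})^{-2}\brho^2\big) - \na_{J^{(n-1)}\cA^{(n-1)}}\!\big((J^{(n-1)})^{-2}\brho^2\big) = \cR_n^{\mathrm{pr}},
\end{align*}
where $\cR_n^{\mathrm{visc}}$, $\cR_n^{\mathrm{pr}}$ collect only terms in which one factor is $J^{(n)}\cA^{(n)}-J^{(n-1)}\cA^{(n-1)}$, $\cA^{(n)}-\cA^{(n-1)}$, or $(J^{(n)})^{-2}-(J^{(n-1)})^{-2}$. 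Integration by parts on the leading term gives a bulk integral, bounded below by Lemma \ref{lem: equal na v} and Lemma \ref{Lem: H^1} by $c_0\|\sigma(v^{(n)})\|_{H^1}^2-C\|\brho^{1/2}\sigma(v^{(n)})\|_{L^2}^2$ (after choosing $T$ small), plus a surface integral on $\{x_3=1\}$ which I would rewrite, using $\mS_{\cA^{(n)}}(v^{(n+1)})\mathcal{N}^{(n)} = \mS_{\cA^{(n-1)}}(v^{(n)})\mathcal{N}^{(n-1)} = 0$, as differences of $\mathcal{N}^{(n)}-\mathcal{N}^{(n-1)}$ and $\mS_{\cA^{(n)}}-\mS_{\cA^{(n-1)}}$, carrying the same ``difference'' structure as $\cR_n^{\mathrm{visc}}$. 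The bottom boundary contributes nothing since $\sigma(v^{(n)})|_{x_3=0}=0$.

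The key observation for the remainders is that $\partial_t\xi^{(n)}=v^{(n)}$ gives $\eta^{(n)}-\eta^{(n-1)}=\int_0^t\sigma(v^{(n-1)})\,ds$, so that every coefficient difference can be written schematically as
\[
F\bigl[D\eta^{(n)},D\eta^{(n-1)}\bigr]\cdot D\!\int_0^t\sigma(v^{(n-1)})\,ds,
\]
with $F$ a smooth function bounded (together with all its derivatives) uniformly in $n$ via the $Y_T$-uniform bound on $v^{(n)}$ and Lemmas \ref{lem: cA,JcA}, \ref{lem:high nav cA}. When these products multiply $\mS_{\cA^{(n)}}v^{(n)}$ and are paired against $\sigma(v^{(n)})$ inside the integral, I would integrate by parts once in order to move the spatial derivative off $\int_0^t\sigma(v^{(n-1)})\,ds$ and onto either the smooth coefficient or $\sigma(v^{(n)})$; the boundary terms from these integrations by parts vanish because either $\brho|_{x_3=1}=0$ (in $\cR_n^{\mathrm{pr}}$) or $\sigma(v^{(n)})|_{x_3=0}=0$. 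After this, the right-hand side depends on $\int_0^t\sigma(v^{(n-1)})\,ds$ only algebraically, and H\"older in time gives $\bigl\|\int_0^t\sigma(v^{(n-1)})\,ds\bigr\|_{L^2}^2\leq t\,\|\sigma(v^{(n-1)})\|_{L^2_tL^2}^2$. The derivatives transferred onto $\sigma(v^{(n)})$ produce, via Lemma \ref{lem: interpolation}, the factor $\frak{ D}(\sigma(v^{(n)}))^2$.

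Combining these and absorbing $\|\sigma(v^{(n)})\|_{H^1}^2$ on the right into the coercive left-hand side yields the pointwise-in-time estimate stated. For the Cauchy property, I would integrate in $t\in[0,T]$ using $\sigma(v^{(n)})|_{t=0}=0$, interchange time integrals, and use the uniform $Y_T$-bound $M$ to control $\|\frak{ D}(\sigma(v^{(n)}))\|_{L^2_T}$, obtaining
\[
\|\sigma(v^{(n)})\|_{\widetilde{Y}_T}^{\,2}\leq C(M)\,T\,\|\sigma(v^{(n-1)})\|_{\widetilde{Y}_T}^{\,2}.
\]
Taking $T$ so small that $C(M)T<1/2$ shows that $\{v^{(n)}\}$ is a Cauchy sequence in $\widetilde{Y}_T$. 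The main obstacle is the bookkeeping for the remainders: one must integrate by parts in every term of $\cR_n^{\mathrm{visc}}$, $\cR_n^{\mathrm{pr}}$, and the boundary correction so that the factor $\int_0^t\sigma(v^{(n-1)})\,ds$ appears \emph{without} any spatial derivative on it, and so that all derivatives end up either on uniformly controlled flow quantities or on $\sigma(v^{(n)})$, which is the only way the right-hand side closes purely in the $L^2_tL^2$-norm of $\sigma(v^{(n-1)})$.
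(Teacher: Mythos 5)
Your overall plan is the same $L^2$ energy method as the paper: test the difference equation against $\sigma(v^{(n)})$, peel off the leading viscous term, and treat the coefficient mismatches as remainders, with the boundary terms on $\{x_3=1\}$ cancelling via the subtracted Neumann conditions. However, the specific device you introduce -- performing an \emph{extra} integration by parts in the remainder integrals so that $\int_0^t\sigma(v^{(n-1)})\,ds$ appears with no spatial derivative on it -- is both unnecessary and, as described, does not close.

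Here is the difficulty. In each remainder term, the coefficient difference carries exactly one derivative, $D(\eta^{(n)}-\eta^{(n-1)})=\nabla\!\int_0^t\sigma(v^{(n-1)})\,ds$, and the integrand already contains the ``test'' factor $\nabla\sigma(v^{(n)})$ coming from the first integration by parts. If you integrate by parts once more to move the derivative off $\int_0^t\sigma(v^{(n-1)})\,ds$, then on at least part of the integrand it lands on $\nabla\sigma(v^{(n)})$, producing $\nabla^2\sigma(v^{(n)})$; this is \emph{not} absorbable into the coercive term $\|\sigma(v^{(n)})\|_{H^1}^2$, and putting it in $L^2$ forces a factor $\frak{D}(\sigma(v^{(n)}))$ that stands alone (multiplied only by $\sqrt{t}\,\|\sigma(v^{(n-1)})\|_{L^2_tL^2}\,\frak{D}(v^{(n)})$), not multiplied by $t\,\|\sigma(v^{(n-1)})\|_{L^2_tL^2}^2$. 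After Young's inequality a term proportional to $\frak{D}(\sigma(v^{(n)}))^2$ with no small prefactor remains on the right, and integrating in time gives a contribution of order $M$, destroying the contraction. Steering the derivative instead onto the smooth coefficients or onto $\nabla v^{(n)}$ would require $\|\nabla^2 v^{(n)}\|_{L^\infty}$-type control, which Lemma \ref{lem: interpolation} does not supply (it only controls conormal derivatives of $\nabla v$, not genuine normal second derivatives in $L^\infty$).

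The paper avoids all of this: it keeps the gradient on $\sigma(v^{(n-1)})$. The coefficient difference is bounded in $L^2$ by $Ct\,\|\nabla\sigma(v^{(n-1)})\|_{L^2_tL^2}$, the remaining factors are placed in $L^\infty$ (using the uniform $Y_T$ bound on $v^{(n)}$), and the test factor $\|\nabla\sigma(v^{(n)})\|_{L^2}$ is absorbed by the coercive left-hand side via Young. This yields
\begin{align*}
\f{d}{dt}\|\brho^{\f12}\sigma(v^{(n)})\|_{L^2}^2+\f{c_0}{2}\|\nabla\sigma(v^{(n)})\|_{L^2}^2
\leq Ct\,\|\nabla\sigma(v^{(n-1)})\|_{L^2_tL^2}^2\bigl(1+\frak{D}(v^{(n)})^2\bigr)+C_0\|\brho^{\f12}\sigma(v^{(n)})\|_{L^2}^2,
\end{align*}
and since $\|\nabla\sigma(v^{(n-1)})\|_{L^2_TL^2}^2\leq\|\sigma(v^{(n-1)})\|_{\widetilde{Y}_T}^2$, the contraction in $\widetilde{Y}_T$ follows directly; there is no need to strip the gradient off $\sigma(v^{(n-1)})$. (Incidentally, this is also the form actually derived in the paper's proof; the statement of the lemma as printed, with $\|\sigma(v^{(n-1)})\|_{L^2_tL^2}$ in place of $\|\nabla\sigma(v^{(n-1)})\|_{L^2_tL^2}$ and $\frak{D}(\sigma(v^{(n)}))$ in place of $\frak{D}(v^{(n)})$, appears to be a typographical slip.) You should drop the extra integration by parts, bound the coefficient mismatch by $\|\nabla\sigma(v^{(n-1)})\|_{L^2_tL^2}$ directly, and let the $H^1$ part of $\widetilde{Y}_T$ do the work.
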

\begin{proof}
Taking $L^2$ inner product between \eqref{eq: v^(n+1)-v^n} and $\sigma(v^{(n)})$, we obtain
\begin{align*}
\f12\f{d}{dt}&\|\brho^{\f12}\sigma(v^{(n)})\|_{L^2}^2-\int_{\Om}\Big(\na_{J^{(n)}\cA^{(n)}}\cdot \mS_{\cA^{(n)}}v^{(n+1)}-\na_{J^{(n-1)}\cA^{(n-1)}}\cdot \mS_{\cA^{(n-1)}}v^{(n)}\Big)~ \sigma(v^{(n)})dx\\
=
&-\int_{\Om}\Big( \na_{J^{(n)}\cA^{(n)}}\big((J^{(n)})^{-2}\brho^2\big)-\na_{J^{(n-1)}\cA^{(n-1)}}\big((J^{(n-1)})^{-2}\brho^2\big)\Big)
~ \sigma(v^{(n)})dx.
\end{align*}

{\bf{Estimate of dissipation term.}}
Since
$$e_3J^{(n)}(\cA^{(n)})^3_i=\mathcal{N}^{(n)},\quad \,e_3J^{(n-1)}(\cA^{(n-1)})^3_i=\mathcal{N}^{(n-1)}$$
and
$$\mathrm{S}_{\cA^{(n)}}(v^{(n+1)})~ \mathcal{N}^{(n)}-\mathrm{S}_{\cA^{(n-1)}}(v^{(n)})~ \mathcal{N}^{(n-1)}=0 \quad \mbox{on}\quad  \Gamma,$$
we get by using integration by parts that
\begin{align*}
&-\int_{\Om}\Big(\na_{J^{(n)}\cA^{(n)}}\cdot \mS_{\cA^{(n)}}v^{(n+1)}-\na_{J^{(n-1)}\cA^{(n-1)}}\cdot \mS_{\cA^{(n-1)}}v^{(n)}\Big)~ \sigma(v^{(n)})dx\\
=&\int_{\Om}\Big(J^{(n)}(\cA^{(n)})^k_i(\mS_{\cA^{(n)}}v^{(n+1)})^i_l-J^{(n-1)}(\cA^{(n-1)})^k_i(\mS_{\cA^{(n-1)}}v^{(n)})^i_l\Big)~ \pa_k\sigma(v^{(n)}_l)dx.\\
=&\int_{\Om} J^{(n)}\cA^{(n)}\mS_{\cA^{(n)}}\sigma(v^{(n)}) \cdot\pa_k\sigma(v^{(n)}_l)dx+\int_{\Om} (J^{(n)}\cA^{(n)}-J^{(n-1)}\cA^{(n-1)}) \mS_{\cA^{(n)}}(v^{(n)})\cdot \pa_k\sigma(v^{(n)}_l)dx\\
&+\int_{\Om}  J^{(n-1)}\cA^{(n-1)}\cdot \mS_{\big(\cA^{(n)}-\cA^{(n-1)}\big)}v^{(n)}\cdot \pa_k\sigma(v^{(n)}_l)dx
\end{align*}
Under the assumption $\|v^{(n)}\|_{Y_T}^2\leq M$, we have
\begin{align*}
&-\int_{\Om}\Big(\na_{J^{(n)}\cA^{(n)}}\cdot \mS_{\cA^{(n)}}v^{(n+1)}-\na_{J^{(n-1)}\cA^{(n-1)}}\cdot \mS_{\cA^{(n-1)}}v^{(n)}\Big)~ \sigma(v^{(n)})dx\\
&\geq\,c_0\|\sigma(v^{(n)})\|_{H^1}^2-C_0\|\brho^{\f12}\sigma(v^{(n)})\|_{L^2}^2\\
&\qquad
-\Big|\int_{\Om}(J^{(n)}\cA^{(n)}-J^{(n-1)}\cA^{(n-1)}) \mS_{\cA^{(n)}}(v^{(n)}): \na\sigma(v^{(n)})dx\Big|\\
&\qquad-\Big|\int_{\Om}J^{(n-1)}\cA^{(n-1)}\cdot \mS_{\big(\cA^{(n)}-\cA^{(n-1)}\big)}v^{(n)}:\na\sigma(v^{(n)})dx\Big|\\
&\triangleq\,c_0\| \sigma(v^{(n)})\|_{H^1}^2-C_0\|\brho^{\f12}\sigma(v^{(n)})\|_{L^2}^2-I_1-I_2,
\end{align*}
where we use $|J^{(n)}|\geq \sigma_0$ and Lemma \ref{lem: equal na v} for $\cA^{(n)}.$

For $I_1$, owing to
\begin{align*}
&J^{(n)}\cA^{(n)}-J^{(n-1)}\cA^{(n-1)}=\big(\na (\eta^{(n)}-\eta^{(n-1)})\big)^*\\
&\qquad=\big(\int_0^t \na\sigma(v^{(n-1)})ds \big)^*\sim \big(\int_0^t \na\sigma(v^{(n-1)})ds \big)^2,
\end{align*}
then
\begin{align*}
&\|J^{(n)}\cA^{(n)}-J^{(n-1)}\cA^{(n-1)}\|_{L^2}\\
&\leq C t \|\na \sigma(v^{(n-1)})\|_{L^2_t L^2}\|\frak{ D}(\sigma(v^{(n-1)}))\|_{L^2_t}\leq C t \|\na \sigma(v^{(n-1)})\|_{L^2_t L^2}.
\end{align*}
Applying Holder inequality and Lemma \ref{lem:assumption} to $\cA^{(n)}$, one has
\begin{align*}
I_1\leq& \|J^{(n)}\cA^{(n)}-J^{(n-1)}\cA^{(n-1)}\|_{L^2}\|\cA^{(n)}\|_{L^\infty}\|\na v^{(n)} \|_{L^\infty}\|\na\sigma(v^{(n)})\|_{L^2}\\
\leq&Ct\|\na\sigma(v^{(n-1)})\|_{L_t^2L^2}\frak{ D}(v^{(n)})\|\na\sigma(v^{(n)})\|_{L^2}.
\end{align*}
%Similarly, since
%\begin{align*}
%\cA^{(n)}-\cA^{(n-1)}=(J^{(n)}\cA^{(n)}-J^{(n-1)}\cA^{(n-1)})(J^{(n)})^{-1}+(J^{(n-1)}-J^{(n)})\cA^{(n-1)}(J^{(n)})^{-1}
%\end{align*}
%and
%\begin{align*}
%J^{(n)}-J^{(n-1)}=det(\na \eta_0+\int_0^t\na v^{(n)}ds)-det(\na \eta_0+\int_0^t \na v^{(n-1)}ds)\sim \int_0^t\na v^{(n)}+\na v^{(n-1)}ds
%\end{align*}
Similarly, we have
\begin{align*}
I_2\leq& Ct\|\na\sigma(v^{(n-1)})\|_{L_t^2L^2}\frak{ D}(v^{(n)})\|\na\sigma(v^{(n)})\|_{L^2}.
\end{align*}
Combining all above estimates, we obtain
\begin{align*}
&-\int_{\Om}\Big(\na_{J^{(n)}\cA^{(n)}}\cdot \mS_{\cA^{(n)}}v^{(n+1)}-\na_{J^{(n-1)}\cA^{(n-1)}}\cdot \mS_{\cA^{(n-1)}}v^{(n)}\Big)~ \sigma(v^{(n)})dx\\
\geq&\f34 c_0\|\sigma(v^{(n)})\|_{H^1}^2-C_0\|\brho^{\f12}\sigma(v^{(n)})\|_{L^2}^2
-Ct^2\|\na\sigma(v^{(n-1)})\|_{L_t^2L^2}^2\frak{ D}(v^{(n)})^2.
\end{align*}

{\bf{Estimate of pressure term.}}
Integrating by parts and using $\brho|_{x_3=1}=0, ~\sigma(v^{(n)})|_{x_3=0}=0$, we prove that
\begin{align*}
&-\int_{\Om}\Big( \na_{J^{(n)}\cA^{(n)}}\big((J^{(n)})^{-2}\brho^2\big)-\na_{J^{(n-1)}\cA^{(n-1)}}\big((J^{(n-1)})^{-2}\brho^2\big)\Big)
~ \sigma(v^{(n)})dx\\
&=\int_{\Om}\Big( \cA^{(n)}\big((J^{(n)})^{-1}\brho^2\big)-\cA^{(n-1)}\big((J^{(n-1)})^{-1}\brho^2\big)\Big)
: \na \sigma(v^{(n)})dx\\
&=\int_{\Om}\big(\cA^{(n)}-\cA^{(n-1)}\big)(J^{(n)})^{-1}\brho^2
: \na \sigma(v^{(n)})dx\\
&\qquad+\int_{\Om}\cA^{(n-1)}\big((J^{(n)})^{-1}-(J^{(n-1)})^{-1}\big)\brho^2
: \na \sigma(v^{(n)})dx\\
&\leq Ct^{\f12}\|\na \sigma(v^{(n-1)})\|_{L^2_tL^2}\|\na\sigma(v^{(n)})\|_{L^2}.
\end{align*}
Collecting all above estimates together, we finally obtain
\begin{equation}\label{diff-energy-1}
\begin{split}
&\f{d}{dt}\|\brho^{\f12}\sigma(v^{(n)})\|_{L^2}^2+\f{c_0}2\|\na \sigma(v^{(n)})\|_{L^2}^2\\
&\leq Ct\|\na\sigma(v^{(n-1)})\|_{L_t^2L^2}^2(1+\frak{ D}(v^{(n)})^2)+C_0\|\brho^{\f12}\sigma(v^{(n)})\|_{L^2}^2.
\end{split}
\end{equation}
Integrating \eqref{diff-energy-1} in $t \in [0, T]$ and taking $T$ small enough, we have
\begin{equation}\label{diff-energy-2}
\begin{split}
&\sup_{t\in[0,T]}\|\brho^{\f12}\sigma(v^{(n)}(t))\|_{L^2}^2+\f{c_0}2\int_0^T\| \sigma(v^{(n)}(t))\|_{H^1}^2dt\\
&\leq \|\brho^{\f12}\sigma(v^{(n)}(0))\|_{L^2}^2+CT\|\na\sigma(v^{(n-1)})\|_{L_T^2L^2}^2(T+\int_0^T\frak{ D}(v^{(n)})^2dt),
\end{split}
\end{equation}
and then
\begin{align*}
\sup_{t\in[0,T]}&\|\brho^{\f12}\sigma(v^{(n)}(t))\|_{L^2}^2+\f{c_0}2\int_0^T\| \sigma(v^{(n)}(t))\|_{H^1}^2dt\\
\leq& CT(T+M)\|\na\sigma(v^{(n-1)}\|_{L_T^2L^2}^2\leq CT\|\na\sigma(v^{(n-1)})\|_{L_T^2L^2}^2.
\end{align*}
%That is
% \begin{align*}
%\sup_{t\in[0,T]}&\|\brho^{\f12}\sigma(v^{(n)}(t))\|_{L^2}^2+\| \sigma(v^{(n)}(t))\|_{L^2_TH^1}^2\leq CT(\sup_{t\in[0,T]}&\|\brho^{\f12}\sigma(v^{(n)}(t))\|_{L^2}^2+\| \sigma(v^{(n)}(t))\|_{L^2_TH^1}^2).
% \end{align*}
%
By now, we get that when $T$ takes small enough, then we get
\begin{align*}
&\sup_{t\in[0,T]}\|\brho^{\f12}\sigma(v^{(n)}(t))\|_{L^2}^2+\| \sigma(v^{(n)}(t))\|_{L^2_TH^1}^2\\
&\leq \f12(\sup_{t\in[0,T]}\|\brho^{\f12}\sigma(v^{(n-1)}(t))\|_{L^2}^2+\| \sigma(v^{(n-1)}(t))\|_{L^2_TH^1}^2),
 \end{align*}
which completes this Lemma.
\end{proof}

\subsection{Proof of Theorem \ref{thm: main}.}
From Lemma \ref{lem1: v^(n+1)-v^n}, we know $\{v^{(n)}\}_{n=1}^{\infty}$ is Cauchy sequence in the space $\widetilde{Y}_T.$ So as $n\to \infty,$
\begin{align}\label{convergence: strong}
\left\{
\begin{aligned}
&\brho^{\f12}v^{(n)}\to \brho^{\f12}v\qquad\mbox{in}\quad C([0,T],L^2),\\
&v^{(n)}\to v\qquad\mbox{in}\quad L^2([0,T],H^1).
\end{aligned}
\right.
\end{align}
Due to Lemma \ref{lem: regular} that $\|v^{(n)}\|_{Y_T}^2\leq M$ uniformly in $n\geq 1,$ sequence $\{v^{(n)}\}_{n=1}^{\infty}$ have weakly convergent subsequence. Along with strong convergence \eqref{convergence: strong}, we infer that as $n\to 0$
\begin{align*}\label{convergence: weak1}
\left\{
\begin{aligned}
&v^{(n)}\rightharpoonup^* v\qquad\mbox{in}\quad L^\infty([0,T],X^{12}_{\f12}),\\
& v^{(n)}\rightharpoonup  v,\qquad\,\na v^{(n)}\rightharpoonup \na v\qquad\mbox{in}\quad L^2([0,T],X^{12}),\\
&\na v^{(n)}\rightharpoonup^* \na v\qquad\mbox{in}\quad L^\infty([0,T],L^2),\\
&\brho^{\f12}\pa_t v^{(n)}\rightharpoonup \brho^{\f12}\pa_tv\qquad\mbox{in}\quad L^2([0,T],L^2).
\end{aligned}
\right.
\end{align*}
So the function $v$ satisfies equation \eqref{eq:CNS1} in weak sense. On the other hand,  lower semicontinuity gives bound $\|v\|_{Y_T}^2\leq 2M$, and then \eqref{est-v-flow-map} holds. As a result, thanks to Aubin-Lions's lemma \cite{Simon1990}, we get that  $(v,\,\eta)\in\,C([0, T]; X^{12}_{\f12}\cap H^1(\Omega)) \times C([0, T]; \mathcal{F}_{\kappa}(\Omega))$ by using a standard procedure (cf. the proof of Theorem 3.5 in \cite{M-book2002}), which is a strong solution to \eqref{eq:CNS1}. The uniqueness comes from $L^2$ energy estimates  with zero initial data. More precise, let $(\xi_1,\,v_1)$ and $(\xi_2,\,v_2)$ are solutions to \eqref{eq:CNS1} with same initial data. The same process in Lemma \ref{lem1: v^(n+1)-v^n} deduce that
\begin{align*}
\|v_1-v_2\|_{\widetilde{Y}_T}^2\leq \f12 \|v_1-v_2\|_{\widetilde{Y}_T}^2,
\end{align*}
which implies $v_1=v_2$ and then $\xi_1=\xi_2$ on the time interval $[0,T]$. Furthermore, applying \eqref{diff-energy-2} to the system \eqref{eq:CNS1}, we may readily prove that the solution $(v,\,\eta)\in\,C([0, T]; X^{12}_{\f12}\cap H^1(\Omega)) \times C([0, T]; \mathcal{F}_{\kappa}(\Omega))$ depends continuously on the initial data $(v_0,\,\eta_0)\in\,(X^{12}_{\f12}\cap H^1(\Omega)) \times \mathcal{F}_{\kappa}(\Omega)$. This finish the proof of Theorem \ref{thm: main}.
\qed

\section* {Acknowledgments.}
G. Gui is partially supported by NSF of China under Grant 11571279 and 11331005. C. Wang is partially supported by NSF of China under Grant 11701016.  Y. Wang is partially supported by China Postdoctoral Science Foundation 8206200009.

%\end{CJK*}
\end{document}